\documentclass{jnsao}
\usepackage[utf8]{inputenc}
\usepackage[finnish,english]{babel}
\usepackage[svgnames]{xcolor}
\usepackage{tikz, pgfplots}
\usepgfplotslibrary{colorbrewer}
\usepackage{enumitem}
\usepackage[nameinlink,capitalise]{cleveref}
\usepackage{nicematrix}
\usepackage{booktabs}

\definecolor{hrefcolor}{rgb}{0.0,0.4,0.7}
\definecolor{citecolor}{rgb}{0.0,0.35,0.2}
\definecolor{structure}{rgb}{0.09,0.09,0.44}
\hypersetup{
    colorlinks=true,
    linkcolor=citecolor,
    citecolor=citecolor,
    filecolor=hrefcolor,
    urlcolor=hrefcolor,
    pdfencoding=auto,
    hypertexnames=false,
}

\newtheorem{assumption}[definition]{Assumption}
\counterwithin{assumption}{section} %

\crefname{assumption}{Assumption}{Assumptions}

\manuscriptcopyright{}
\manuscriptlicense{}

\usepackage{float}
\usepackage{algpseudocode}

\floatstyle{ruled}
\floatname{algorithm}{Algorithm}
\newfloat{algorithm}{tbp!}{loa}
\numberwithin{algorithm}{section}

\def\tripleVert{\|\kern-1pt|}
\def\onorm#1{\tripleVert #1 \tripleVert}
\def\doublelangle{\langle\kern-2pt\langle}
\def\doublerangle{\rangle\kern-2pt\rangle}
\def\oiprod#1#2{\doublelangle #1, #2 \doublerangle}

\tikzset{notestyleraw/.append style={align=justify}}

\title{Linearly convergent bilevel optimization with single-step inner methods}
\shorttitle{Bilevel optimization with single-step inner methods}

\author{%
    Ensio Suonperä\thanks{Department of Mathematics and Statistics, University of Helsinki, Finland. \mbox{\email{ensio.suonpera@helsinki.fi}}}
    \and
    Tuomo Valkonen\thanks{ModeMat, Escuela Politécnica Nacional, Quito, Ecuador \emph{and} Department of Mathematics and Statistics, University of Helsinki, Finland. \email{tuomo.valkonen@iki.fi}, \orcid{0000-0001-6683-3572}}
}

\acknowledgements{%
    This research has been supported by the Academy of Finland grants 314701, 320022, and 345486.
}

\date{2022-05-10 (revised 2023-05-29)}

\newcommand{\term}{\emph}

\newcommand{\field}[1]{\mathbb{#1}}
\newcommand{\N}{\mathbb{N}}

\newcommand{\R}{\field{R}}
\newcommand{\extR}{\overline \R}

\newcommand{\norm}[1]{\|#1\|}

\newcommand{\abs}[1]{|#1|}

\newcommand{\inv}[1]{#1^{-1}}
\newcommand{\grad}{\nabla}
\newcommand{\freevar}{\,\boldsymbol\cdot\,}
\newcommand{\Union}\bigcup
\newcommand{\Isect}\bigcap
\newcommand{\union}\cup
\newcommand{\isect}\cap
\newcommand{\bigunion}\bigcup
\newcommand{\bigisect}\bigcap

\newcommand{\defeq}{:=}

\newcommand{\downto}{\searrow}

\newcommand{\subdiff}{\partial}

\DeclareMathOperator*{\argmin}{arg\,min}

\DeclareMathOperator{\bd}{bd}

\DeclareMathOperator{\Dom}{dom}

\DeclareMathOperator{\diag}{diag}

\DeclareMathOperator{\prox}{prox}
\DeclareMathOperator{\proj}{proj}

\newcommand{\iprod}[2]{\langle #1,#2\rangle}


\def \weaktostarSym{\setbox0=\hbox{$\rightharpoonup$}\rlap{\hbox 
        to\wd0{\hss\raise1ex\hbox{$\scriptscriptstyle{*\,}$}\hss}}\box0}
    

\def\linear{\mathbb{L}}

\def\extR{\overline \R}

\let\phi=\varphi
\let\epsilon=\varepsilon

\DeclareMathOperator{\Id}{Id}

\def\nexxt#1{#1^{k+1}}
\def\this#1{#1^k}
\let\opt\widehat

\def\Vfun{\mathscr{V}}
\def\AlphaSpace{\mathscr{A}}

\begin{document}

\maketitle

\begin{abstract}
    We propose a new approach to solving bilevel optimization problems, intermediate between solving full-system optimality conditions with a Newton-type approach, and treating the inner problem as an implicit function.
    The overall idea is to solve the full-system optimality conditions, but to precondition them to alternate between taking steps of simple conventional methods for the inner problem, the adjoint equation, and the outer problem.
    While the inner objective has to be smooth, the outer objective may be nonsmooth subject to a prox-contractivity condition.
    We prove \emph{linear convergence} of the approach for combinations of gradient descent and forward-backward splitting with exact and inexact solution of the adjoint equation.
    We demonstrate good performance on learning the regularization parameter for anisotropic total variation image denoising, and the convolution kernel for image deconvolution. 
\end{abstract}

\section{Introduction}

Two general approaches are typical for the solution of the bilevel optimization problem
\begin{equation}
    \label{eq:bilevel-problem}
    \min_{\alpha\in\AlphaSpace}~J(S_u(\alpha)) + R(\alpha)
    \quad\text{with}\quad
    S_u(\alpha) \in \argmin_{u\in U} F(u; \alpha)
\end{equation}
in Hilbert spaces $\AlphaSpace$ and $U$.
The first, familiar from the treatment of general mathematical programming with equilibrium constraints (MPECs), is to write out the
Karush--Kuhn--Tucker conditions for the whole problem in a suitable form, and to apply a Newton-type method or other nonlinear equation solver to them \cite{bard1982explicit,allende2012solving,fliege2021gauss,jiang2013application,reyes2021bilevel}.

The second approach, common in the application of \eqref{eq:bilevel-problem} to inverse problems and imaging \cite{falk1995bilevel,delosreyes2014learning,kunisch2012bilevel,holler2018bilevel,hintermueller2017optimal,sherry2020learning,calatroni2014dynamic}, treats the solution mapping $S_u$ as an implicit function. Thus it is necessary to (i) on each outer iteration $k$ solve the inner problem $\min_u F(u; \alpha^k)$ near-exactly using an optimization method of choice; (ii) solve an adjoint equation to calculate the gradient of the solution mapping; and (iii) use another optimization method of choice on the outer problem $\min_\alpha J(S_u(\alpha))$ with the knowledge of $S_u(\alpha^k)$ and $\grad S_u(\alpha^k)$.
The inner problem is therefore generally assumed to have a unique solution, and the solution map to be differentiable. An algorithm for nonsmooth inner problems has been developed in \cite{liu2020generic}, while \cite{reyes2021optimality} rely on proving directional Bouligand differentiability for otherwise nonsmooth problems.

The challenge of the first “whole-problem” approach is to scale it to large problems, typically involving the inversion of large matrices.
The difficulty with the second “implicit function” approach is that the inner problem needs to be solved several times, which can be expensive. Solving the adjoint equation also requires matrix inversion.
The variant in \cite{ehrhadt2021inexact} avoids this through derivative-free methods for the outer problem. It also solves the inner problem to a low but controlled accuracy.

In this paper, by preconditioning the implicit-form first-order optimality conditions, we develop an intermediate approach more efficient than the aforementioned, as we demonstrate in the numerical experiments of \cref{sec:numerical}. It can be summarized as (i) take only \emph{one} step of an optimization method on the inner problem, (ii) perform a \emph{cheap} operation to advance towards the solution of the adjoint equation, and, finally, (iii) using this approximate information, take one step of an optimization method for the outer problem. Repeat.

The preconditioning, which we introduce in detail in \cref{sec:methods},
is based on insight from the derivation of the primal-dual proximal splitting of \cite{chambolle2010first} as a preconditioned proximal point method \cite{he2012convergence,tuomov-proxtest,clason2020introduction}.
We write the optimality conditions for \eqref{eq:bilevel-problem} as the inclusion $0 \in H(x)$ for a set-valued $H$, where $x=(u,p,\alpha)$ for an adjoint variable $p$.
The basic proximal point method then iteratively solves $x^{k+1}$ from
\[
    0 \in H(x^{k+1}) + (x^{k+1}-x^k).
\]
This can be as expensive as solving the original optimality condition. The idea then is to introduce a preconditioning operator $M$ that \emph{decouples} the components of $x$---in our case $u$, $p$ and $\alpha$---such that each component can be solved in succession from
\[
    0 \in H(x^{k+1}) + M(x^{k+1}-x^k).
\]
Gradient steps can be handled through nonlinear preconditioning \cite{tuomov-proxtest,clason2020introduction}, as we will see in \cref{sec:methods} when we develop the approach in detail along with two more specific algorithms, the FEFB (Forward--Exact--Forward-Backward) and the FIFB (Forward--Inexact--Forward-Backward). In \cref{sec:convergence} we prove their local linear convergence under a second-order growth condition on the composed objective $J \circ S_u$, and other more technical conditions. The proof is based on the “testing” approach developed in \cite{tuomov-proxtest} and also employed extensively in \cite{clason2020introduction,tuomov-firstorder}.
Finally, we evaluate the numerical performance of the proposed schemes on imaging applications in \cref{sec:numerical}, specifically the learning of a regularization parameter for total variation denoising, and the convolution kernel for deblurring. Since the purpose of these experiments is a simple performance comparison between different algorithms, instead of real applications, we only use a single training sample of various dimensions, as explained in \cref{sec:numerical}.

Intermediate approaches, some reminiscent of ours, have recently also been developed in the machine learning community.
Our approach, however, allows a non-smooth function $R$ in the outer problem \cref{eq:bilevel-problem}. Moreover, to our knowledge, our work is the first to show linear convergence for a fully “single-loop” algorithm.
To be more precise, the STABLE \cite{chen2021single}, TTSA \cite{hong2020two}, FLSA \cite{li2022fully}, MRBO, VRBO \cite{yang2021provably}, and SABA \cite{dagreou2022framework} are “single-loop” algorithms such as ours, taking only a single step towards the solution of the inner problem on each outer iteration.
The STABLE requires solving the adjoint equation exactly, as does our first approach, the FEFB.
The others use a Neumann series approximation for the adjoint equation.
Our second approach, the FIFB, takes a simple step reminiscent of gradient descent for the adjoint equation.
The TSSA and STABLE obtain sublinear convergence of the outer iterates $\{\this\alpha\}_{k \in \N}$ assuming strong convexity (second-order growth) of both the inner and outer objective.
For the SABA similar linear convergence is claimed with the outer strong convexity replaced by a Polyak-{\L}ojasiewicz inequality.
Without either of those assumptions, the theoretical results on the aforementioned methods from the literature are much weaker, and generally only show various forms of “stall” of the iterates at a sublinear rate, or the ergodic convergence of the gradient $\grad_{\alpha}[J\circ S_u](\alpha^k)$ of the composed objective to zero. Such modes of convergence say very little about the convergence of function values to optimum or the iterates to a solution.

In the context of not fully single-loop algorithms, the AID, ITD \cite{ji2021bilevel}, AccBio \cite{ji2022lower}, and ABA \cite{ghadimi2018approximation} take a fixed (small) number of inner iterations for each outer iteration.
The AID and ITD only sublinear convergence of the composed gradient is claimed. For the ABA and AccBio linear convergence of outer function values is claimed under strong convexity of both the inner and outer objectives.

\subsection*{Fundamentals and applications}

Fundamentals of MPECs and bilevel optimization are treated in the books \cite{luo1996mathematical,aussel2017generalized,dempe2006foundations,dempe2015bilevel}.
An extensive literature review up to 2018 can be found in \cite{dempe2018bilevel}, and recent developments in \cite{dempe20202bilevel}. Optimality conditions for bilevel problems, both necessary and sufficient, are developed in, e.g., \cite{ye1995optimality,zemkoho2016solving,dempe2012bilevel,mehlitz2021sufficient,bai2022directional}. A more limited type of “bilevel” problems only constrains $\alpha$ to lie in the set of minimisers of another problem. Algorithms for such problems are treated in \cite{sabach2017firstorder,shehu2019inertial}.

Bilevel optimization has been used for \term{learning} regularization parameters and forward operators for inverse imaging problems. With total variation regularization in the inner problem, the parameter learning problem in its most basic form reads \cite{delosreyes2014learning}
\[
    \min_{\alpha}~\frac{1}{2}\norm{S_u(\alpha)-b}^2 + R(\alpha)
    \quad\text{with}\quad
    S_u(\alpha) = \argmin_{u\in U} \frac{1}{2}\norm{A_\alpha u-z}^2 + \alpha_1\norm{\grad u}_1.
\]
This problem finds the best possible $\alpha$ for reconstructing the “ground truth” image $b$ from the measurement data $z$, which may be noisy and possibly transformed and only partially known through the \term{forward operator} $A_\alpha$, mapping images to measurements.
To generalize to multiple images, the outer problem would sum over them and corresponding inner problems \cite{calatroni2014dynamic}.
Multi-parameter regularization is discussed in \cite{tuomov-tgvlearn}, and natural conditions for $\alpha>0$ in \cite{tuomov-interior}.\footnote{An error in \cite[Lemma 10]{tuomov-interior} requires some conditions therein to be taken “in the limit” as $t \downto 0$.}
In other works, the forward operator $A_\alpha$ is learned for blind image deblurring \cite{hintermuller2015bilevel} or undersampling in magnetic resonance imaging \cite{sherry2020learning}.
In \cite{kunisch2012bilevel} regularization kernels are learned, while  \cite{reyes2021optimality,chambolle2021learning} study the learning of optimal discretisation schemes.
To circumvent the non-differentiability of $S_u$, \cite{ochs2016techniques} replace the inner problem with a fixed number of iterations of an algorithm.
Their approach has connections to the learning of deep neural networks.

Bilevel problems can also be seen as leader--follower or \term{Stackelberg games}: the outer problem or agent leads by choosing $\alpha$, and the inner agent reacts with the best possible $u$ for that $\alpha$.
Multiple-agent \term{Nash equilibria} may also be modeled as bilevel problems.
Both types of games can be applied to financial markets and resource use planning; we refer to the the aforementioned books \cite{luo1996mathematical,aussel2017generalized,dempe2006foundations,dempe2015bilevel} for specific examples.

\subsection*{Notation and basic concepts}
\label{par:notation}

We write $\linear(X; Y)$ for the space of bounded linear operators between the normed spaces $X$ and $Y$ and $\Id$ for the identity operator.
Generally $X$ will be Hilbert, so we can identify it with the dual $X^*$.

For $G \in C^1(X)$, we write $G'(x) \in X^*$ for the Fr\'{e}chet derivative at $x$, and $\grad G(x) \in X$ for its Riesz presentation, i.e., the gradient.
For $E \in C^1(X; Y)$, since $E'(x) \in \linear(X; Y)$, we use the Hilbert adjoint to define $\grad E(x) \defeq G'(x)^* \in \linear(Y; X)$.
Then the Hessian $\grad^2 G(x) \defeq \grad[\grad G](x) \in \linear(X; X)$.
When necessary we indicate the differentiation variable with a subscript, e.g., $\grad_u F(u, \alpha)$.
For convex $R: X \to \extR$, we write $\Dom R$ for the effective domain and $\subdiff R(x)$ for the subdifferential at $x$.
With slight abuse of notation, we identify $\subdiff R(x)$ with the set of Riesz presentations of its elements.
We define the proximal operator as $\prox_R(x) \defeq \argmin_z \frac{1}{2}\norm{z-x}^2 + R(z)=\inv{(\Id+\subdiff R)}(x)$.

We write $\iprod{x}{y}$ for an inner product, and $B(x,r)$ for a closed ball in a relevant norm $\norm{\freevar}$.
For self-adjoint positive semi-definite $M\in \linear(X; X)$ we write $\norm{x}_{M} \defeq \sqrt{\iprod{x}{x}_{M}} \defeq \sqrt{\iprod{Mx}{x}}.$
Pythagoras' or \emph{three-point identity} then states
\begin{equation}\label{3-point-identity}
    \iprod{x-y}{x-z}_{M} = \frac{1}{2}\norm{x-y}^2_M - \frac{1}{2}\norm{y-z}^2_M + \frac{1}{2}\norm{x-z}^2_M \qquad \text{for all } x,y,z\in X.
\end{equation}
We extensively use Young's inequality
\[
    \iprod{x}{y} \leq \frac{a}{2}\norm{x}^2 + \frac{1}{2a}\norm{y}^2 \qquad \text{for all } x,y\in X,\, a > 0.
\]
We sometimes apply operations on $x \in X$ to all elements of a set $A \subset X$, writing $\iprod{x+A}{z} \defeq \{\iprod{x+a}{z}
\mid a \in A \}$, and for $B \subset \R$, writing $B \ge c$ if $b \ge c$ for all $b \in B.$

\section{Proposed methods}
\label{sec:methods}

We now present our proposed methods for \eqref{eq:bilevel-problem}.
They are based on taking a single gradient descent step for the inner problem, and using forward-backward splitting for the outer problem.
The two methods differ on how an “adjoint equation” is handled.
We present the algorithms and assumptions required to prove their convergence in \cref{subsec:Algorithm:FEFB,subsec:Algorithm:FIFB} after deriving optimality conditions and the adjoint equation in \cref{subsec:optimality cond}.
We prove convergence in \cref{sec:convergence}.

\subsection{Optimality conditions}
\label{subsec:optimality cond}

Suppose $u\mapsto F(u;\alpha)\in C^2(U)$ is proper, coercive, and weakly lower semicontinuous for each \term{outer variable} $\alpha \in \Dom R \subset \AlphaSpace$.
Then the direct method of the calculus of variations guarantees the inner problem $\min_u F(u; \alpha)$ to have a solution.
If, further, $u\mapsto F(u;\alpha)$ is strictly convex, the solution is unique so that the solution mapping $S_u$ from \eqref{eq:bilevel-problem} is uniquely determined.

Suppose further that $F, \grad F$ and $S_u$ are Fr\'{e}chet differentiable.
Writing $T(\alpha) \defeq (S_u(\alpha), \alpha)$, Fermat's principle and $S_u(\tilde\alpha) \in \argmin_u F(u; \tilde\alpha)$ then show that
\begin{equation}\label{eq:inner-S_u-oc}
    [\grad_{u} F\circ T] (\alpha)= \grad_{u} F(S_u(\alpha); \alpha)  =0
\end{equation}
for $\alpha$ near $\tilde\alpha$.
Therefore, the chain rule for Fr\'{e}chet differentiable functions yields
\[
    0=\grad_{\alpha}[\grad_{u} F \circ T](\alpha) = \grad_{\alpha}S_u(\alpha) \grad_{u}^2 F(T(\alpha)) + \grad_{\alpha u} F(T(\alpha)).
\]
That is, $p=\grad_{\alpha}S_u(\alpha)$ solves for $u=S_u(\alpha)$ the \term{adjoint equation}
\begin{equation}
    \label{eq:p-row-oc}
    0=p \grad_{u}^2 F(u, \alpha) + \grad_{\alpha u} F(u, \alpha).
\end{equation}
We introduce the corresponding solution mapping for the \term{adjoint variable} $p$,
\begin{equation}
    \label{def:S_p}
    S_p(u,\alpha) := - \grad_{\alpha u} F(u; \alpha) \left (\grad_u^2 F(u; \alpha)\right )^{-1}.
\end{equation}
We will later make assumptions that ensure that $S_p$ is well-defined.

Since $S_u: \AlphaSpace \to U$, the Fréchet derivative $S_u'(\alpha) \in \linear(\AlphaSpace; U)$ and the Hilbert adjoint $\grad_\alpha S_u(\alpha) \in \linear(U; \AlphaSpace)$ for all $\alpha$. Consequently $p \in \linear(U; \AlphaSpace)$, but we will need $p$ to lie in  an inner product space.
Assuming $\AlphaSpace$ to be a \emph{separable} Hilbert space, we introduce such structure
\begin{subequations}
\label{eq:p-space}
\begin{equation}
    P=(\linear(U; \AlphaSpace), \oiprod{\freevar}{\freevar})
\end{equation}
by using a countable orthonormal basis $\{\phi_i\}_{i\in I}$ of $\AlphaSpace$ to define the inner product
\begin{equation}
    \label{def:p-inner_prod}
	 \oiprod{p_1}{p_2}
     \defeq
     \sum_{i\in I} \iprod{p_1^* \phi_i}{p_2^* \phi_i}
     =
     \sum_{i\in I} \iprod{ \phi_i}{p_1 p_2^* \phi_i}.
     \quad (p_1, p_2 \in \linear(U; \AlphaSpace)).
\end{equation}
\end{subequations}
We briefly study this inner product and the induced norm $\onorm{\freevar}$ in \cref{sec:separable}.

By the sum rule for Clarke subdifferentials (denoted $\subdiff_C$) and their compatibility with convex subdifferentials and Fréchet differentiable functions \cite{clarke1990optimization}, we obtain
\[
    \subdiff_C (J \circ S_u+R)(\opt{\alpha})
    =
    \grad_{\alpha}(J \circ S_u)(\opt{\alpha}) +  \partial R(\opt{\alpha})
    =
    \grad_{\alpha}S_u(\opt{\alpha})\grad_{u}J(S_u(\opt{\alpha})) +  \partial  R(\opt{\alpha}).
\]
The Fermat principle for Clarke subdifferentials then furnishes the necessary optimality condition
\begin{equation}
    \label{eq:outer-oc}
    0 \in \grad_{\alpha}(J \circ S_u)(\opt{\alpha}) +  \partial R(\opt{\alpha})= \grad_{\alpha}S_u(\opt{\alpha})\grad_{u}J(S_u(\opt{\alpha})) +  \partial  R(\opt{\alpha}).
\end{equation}

We combine \cref{eq:inner-S_u-oc,eq:p-row-oc,eq:outer-oc} as the inclusion 
\begin{gather}
    \label{eq:main_optimality_condition}
    0 \in H(\opt{u}, \opt{p}, \opt{\alpha})
\shortintertext{with}
    \label{eq:H-def}
    H(u,p,\alpha) := \begin{pmatrix}
        \grad_{u} F(u; \alpha)  \\
        p \grad_{u}^2 F(u; \alpha) + \grad_{\alpha u} F(u; \alpha)  \\
        p\grad_{u}J(u) +  \partial R(\alpha)
    \end{pmatrix}
    \quad\text{for all}\quad
    u\in U, p\in P, \alpha\in\AlphaSpace.
\end{gather}
This is the optimality condition that our proposed methods, presented in \cref{subsec:Algorithm:FEFB,subsec:Algorithm:FIFB}, attempt to satisfy.
We generally abbreviate
\[
    x=(u,p,\alpha),
    \quad
    \opt x=(\opt u,\opt p,\opt\alpha),
    \quad
    \text{etc.}
\]

\subsection{Algorithm: forward--exact--forward-backward}
\label{subsec:Algorithm:FEFB}

Our first strategy for solving \cref{eq:main_optimality_condition} takes just a \emph{single} gradient descent step for the inner problem, solves the adjoint equation exactly, and then takes a forward-backward step for the outer problem.
We call this \cref{alg:FEFB} the FEFB (forward--exact--forward-backward).

\begin{algorithm}
    \caption{Forward--exact--forward-backward (FEFB) method }
    \label{alg:FEFB}
    \begin{algorithmic}[1]
        \Require
        Functions $R$, $J$, $F$ as in \cref{ass:FEFB:main}.
        Step length parameters $\tau,\sigma>0$.
        \State Pick an initial iterate $(u^{0}, \alpha^{0}) \in U \times \AlphaSpace.$
        \For{$k \in \N$}
        \State $u^{k+1} \defeq u^{k} - \tau \grad_{u} F(u^{k}; \alpha^{k})$
        \Comment{inner gradient step}
        \State $p^{k+1} \defeq - \grad_{\alpha u} F(u^{k+1}; \alpha^{k})\left (\grad_{u}^2 F(u^{k+1}; \alpha^{k})\right )^{-1}$
        \Comment{adjoint solution}
        \State $\alpha^{k+1} \defeq \prox_{ \sigma R}\left (\alpha^{k} - \sigma ( p^{k+1}\grad_{u}J(u^{k+1}))\right )$
        \Comment{outer forward-backward step}
        \EndFor
    \end{algorithmic}
\end{algorithm}

Using $H$ defined in \eqref{eq:H-def}, \cref{alg:FEFB} can be written implicitly as solving
\begin{equation}
    \label{eq:FEFB:implicit}
    0 \in H_{k+1}(x^{k+1}) + M(x^{k+1} - x^k)
\end{equation}
for $x^{k+1} = (u^{k+1}, p^{k+1}, \alpha^{k+1})$, where, with $x=(u, p, \alpha)$,
\begin{subequations}
\begin{equation}
    \label{eq:H_k+1-def}
    H_{k+1}(x)
        \defeq
        H(x)
        + \begin{pmatrix}
            \grad_u F(u^k; \alpha^k) - \grad_u F(u; \alpha)  \\
            p \grad_{u}^2 F(u; \alpha^k) + \grad_{\alpha u}F(u; \alpha^k) - \left (p \grad_{u}^2 F(u; \alpha) + \grad_{\alpha u}F(u; \alpha)\right ) \\
            0
        \end{pmatrix},
\end{equation}
and the \term{preconditioning operator} $M\in \mathbb{L}(U\times P \times \AlphaSpace; U\times P \times \AlphaSpace)$ is
\begin{equation}
    \label{eq:mkplus1}
    M \defeq
    \diag(
        \tau^{-1}\Id, 0, \sigma^{-1}\Id
    ).
\end{equation}
\end{subequations}
The “nonlinear preconditioning” applied to $H$ to construct $H_{k+1}$ shifts iterate indices such that a forward step is performed instead of a proximal step; compare \cite{tuomov-proxtest,clason2020introduction}.

We next state essential structural, initialisations, and step length assumptions.
We start with a contractivity condition needed for the proximal step with respect to $R$.

\begin{assumption}
    \label{ass:contractivity}
    Let $R: \AlphaSpace \to \extR$ be convex, proper, and lower semicontinuous.
    We say that $R$ is \term{locally prox-$\sigma$-contractive at $\opt\alpha \in \AlphaSpace$ for $q \in \AlphaSpace$} (within $A \subset \Dom R$) if there exist $ C_R > 0$ and a neighborhood $A \subset \Dom R$ of $\opt\alpha$ such that, for all $\alpha \in A$,
    \[
        \norm{D_{\sigma R}(\alpha)-D_{\sigma R}(\opt\alpha)} \le \sigma C_R \norm{\alpha-\opt\alpha}
        \quad\text{for}\quad
        D_{\sigma R}(\alpha) \defeq \prox_{\sigma R}(\alpha - \sigma q)-\alpha.
    \]
\end{assumption}

We verify \cref{ass:contractivity} for some common cases in \cref{sec:prox-contractivity}.
When applying the assumption to to $\opt\alpha$ satisfying \eqref{eq:main_optimality_condition}, we will take $q = -\opt p\grad_u J(\opt u) \in \subdiff R(\opt\alpha)$.
Then $D_{\sigma R}(\opt\alpha)=0$ by standard properties of proximal mappings.
The results for nonsmooth functions in \cref{sec:prox-contractivity} in that case forbid \emph{strict complementarity}. In particular, for $R=\beta\norm{\freevar}_1 + \delta_{[0, \infty)^n}$ we need to have $q \in (\beta, \ldots, \beta)$, and for $R=\delta_C$ for a convex set $C$, we need to have $q=0$.
Intuitively, this restriction serves to forbid the \emph{finite identification} property \cite{hare2004identifying} of proximal-type methods, as $\{\alpha^n\}$ cannot converge too fast in our techniques for the stability of the inner problem and adjoint with respect to perturbations of $\alpha$.

We now come to our main assumption for the FEFB. It collects conditions related to step lengths, initialization, and the problem functions $F$, $J$, and $R$.
For a constant $c>0$ to be determined by the assumption, we introduce the \term{testing operator}
\begin{equation}
    \label{eq:testing-operator}
    Z \defeq \diag(\phi_u \Id, \Id, \Id).
\end{equation}
The idea, introduced in \cite{tuomov-proxtest} and further explained in \cite{clason2020introduction}, is to \term{test} the algorithm-defining inclusion \eqref{eq:FEFB:implicit} with the linear functional $\iprod{Z\freevar}{\nexxt x-\opt x}$ to obtain a descent estimate with respect to the $ZM$-norm. The operator $Z$ encodes component-specific scalings and convergence rates, although we do not exploit the latter in this manuscript.

\begin{assumption}
    \label{ass:FEFB:main}
    We assume that $U$ is a Hilbert space, $\AlphaSpace$  a separable Hilbert space, and treat the adjoint variable $p\in\linear(U; \AlphaSpace)$ as an element of the inner product space $P$ defined in \eqref{eq:p-space}.
    Let $R: \AlphaSpace \to \extR$ and $J: U \to \R $ be convex, proper, and lower semicontinuous, and assume the same from $F(\freevar, \alpha)\in C^2(U)$ for all $\alpha \in \Dom R.$
    Pick $(\opt u,\opt p,\opt \alpha) \in H^{-1}(0)$ and let $\{(u^m, p^m, \alpha^m)\}_{m\in \mathbb{N}}$ be generated by \cref{alg:FEFB} for a given initial iterate $(u^{0}, p^{0}, \alpha^{0}) \in U \times P \times \Dom R$.
    For a given $r, r_u>0$ we suppose that
    \begin{enumerate}[label=(\roman*)]
        \item\label{ass:FEFB:main:init}
        The relative initialization bound $\norm{u^{1}-S_u(\alpha^{0})} \leq C_u \norm{\alpha^{0} - \opt{\alpha}}$ holds for some $C_u>0$.

        \item\label{ass:FEFB:main:solution-map-and-F}
        There exists in $B(\opt{\alpha}, 2r) \isect \Dom R$ a continuously Fréchet-differentiable and $L_{S_u}$-Lipschitz inner problem solution mapping $S_u: \alpha \mapsto S_u(\alpha) \in \argmin F(\freevar; \alpha)$.
        
        \item\label{ass:FEFB:main:neighbourhoods}
        $F(\opt{u};\freevar)$ is Lipschitz continuously differentiable with factor $L_{\grad F,\opt{u}} > 0$, and $\gamma_F\cdot\Id \le\grad_u^2 F(u; \alpha)\le L_F\cdot\Id$ for all $(u, \alpha) \in B(\opt{u}, r_u) \times ( B(\opt{\alpha}, r) \isect \Dom R)$ for some $\gamma_F, L_F > 0.$
        Moreover, $(u,\alpha) \mapsto\grad_{u}^2 F (u; \alpha)$ and $(u,\alpha) \mapsto\grad_{\alpha u} F (u; \alpha) \in P$ are Lipschitz in $B(\opt{u}, r_u)\times (B(\opt{\alpha}, r) \isect \Dom R)$ with factors $L_{\grad ^2 F}$ and $L_{\grad_{\alpha u} F}$, where we equip $U \times \AlphaSpace$ with the norm $(u, \alpha) \mapsto \norm{u}_U + \norm{\alpha}_{\AlphaSpace}$.

		\item\label{ass:FEFB:main:inner-step-length}
		The inner step length $\tau \in (0, 2\kappa/L_F]$ for some $\kappa \in (0, 1)$.

       \item\label{ass:FEFB:main:outer-objective}
       The outer fitness function $J$ is Lipschitz continuously differentiable with factor $L_{\grad J}$, and $\gamma_\alpha\cdot\Id\le\grad_{\alpha}^2(J\circ S_u)\le L_\alpha\cdot\Id$ in $B(\opt{\alpha}, r)\isect \Dom R$ for some $\gamma_\alpha,L_\alpha>0$.
       Moreover, $R$ is locally prox-$\sigma$-contractive at $\opt\alpha$ for $\opt p\grad_u J(\opt u)$ within $B(\opt{\alpha}, r) \isect \Dom R$ for some $C_R\ge 0$.
    	
       \item \label{ass:FEFB:main:testing-params}
       The constants $\phi_u, C_u > 0$ satisfy
       \begin{gather*}
			\gamma_F (L_{\grad J}N_p + L_{S_p} N_{\grad J})C_u + \frac{L_{\grad F,\opt u}^2}{(1-\kappa)} \phi_u < \gamma_F \gamma_\alpha,
    		\shortintertext{where}
    		\begin{aligned}
    			N_{\grad_{\alpha u} F} & \defeq \max_{\substack{u \in B(\opt{u}, r_u),\\ \alpha \in B(\opt{\alpha}, 2r)\isect \Dom R}}  \onorm{\grad_{\alpha u} F(u,\alpha)},
    			&
    			L_{S_p} & \defeq \gamma_F^{-2} L_{\grad^2 F}N_{\grad_{\alpha u} F} + \gamma_F^{-1} L_{\grad_{\alpha u} F},
    			\\
    			N_{\grad J} & \defeq \max_{\alpha\in B(\opt{\alpha}, r)\isect \Dom R} \norm{\grad_u J (S_u(\alpha))},
    			&
    			N_{\grad S_u} & \defeq
    			\max_{\alpha\in B(\opt{\alpha}, r)\isect \Dom R}
    			\onorm{\grad_{\alpha}S_u(\alpha)},
    			\text{ and}
    			\\
    			N_p & \defeq N_{\grad S_u} + C r \text{ with } C=L_{S_p} C_u.
    			&
    			&
    		\end{aligned}
    	\end{gather*}

        \item\label{ass:FEFB:main:outer-step-length}
        The outer step length $\sigma$ fulfills
        \[
            0 < \sigma \leq \frac{(C_F-1)C_u}{(L_{S_u} +C_FC_u)C_{\alpha}}
            \quad\text{where}\quad
            \begin{cases}
                C_F \defeq \sqrt{1+2\tau\gamma_F(1 - \kappa)},
                \quad\text{and}
                \\
                C_{\alpha} \defeq (N_pL_{\grad J} +  N_{\grad J} L_{S_p}) C_u + L_\alpha + C_R.
            \end{cases}
        \]

        \item\label{ass:FEFB:main:local}
        The initial iterates $u^0$ and $\alpha^0$ are such that the distance-to-solution
        \begin{gather*}
            r_0
            \defeq
            \sqrt{\sigma \phi_u\tau^{-1}\norm{u^{0}-\opt u}^2 + \norm{\alpha^{0} - \opt \alpha}^2}
            =
            \sqrt{\sigma}\norm{x^{0}-\opt x}_{ZM}
        \shortintertext{satisfies}
        	r_0
        	\le
        	r
        	\quad\text{and}\quad
            r_0 \max\{2L_{S_u}, \sqrt{\sigma^{-1}\inv \phi_u\tau}(1+\tau L_{F}) + \tau L_{\grad F,\opt{u}}\}
            \le
            r_u.
        \end{gather*}
\end{enumerate}
\end{assumption}

\begin{remark}[Interpretation]
    \label{rem:FEFB:interpretation}
    Part \cref{ass:FEFB:main:init} of \cref{ass:FEFB:main} ensures that the initial inner problem iterate is good \emph{relative} to the outer problem iterate.
    If $u^1$ solves the inner problem for $\alpha^0$, \cref{ass:FEFB:main:init} holds for any $C_u>0$.
    Therefore, \cref{ass:FEFB:main:init} can always be satisfied by solving the inner problem for $\alpha^0$ to high accuracy. This condition \emph{does not} require $\alpha^0$ to be close to a solution $\opt\alpha$ of the entire problem.
   
    Part \cref{ass:FEFB:main:solution-map-and-F} ensures that the inner problem solution map exists and is well-behaved; we discuss it more in the next \cref{rem:FEFB:solution-map-existence}.
    
    Parts \cref{ass:FEFB:main:neighbourhoods} and \cref{ass:FEFB:main:outer-objective} are second order growth and boundedness conditions, standard in smooth optimization. The nonsmooth $R$ is handled through the prox-$\sigma$-contractivity assumption.
    If $S_u$ is twice Fréchet differentiable, the product and the chain rules establish
	\[
		\iprod{h}{\grad_\alpha^2(J \circ S_u)(\alpha)h}
		=
		\iprod{S_u'(\alpha)h}{\grad_u^2 J(S_u(\alpha))S'(\alpha)h} + \iprod{\grad_u J(S_u(\alpha))}{S_u''(\alpha)(h, h)}
        \quad (h \in \AlphaSpace).
	\]
    If $R=0$, first-order optimality conditions establish $\grad_u J(S_u(\opt\alpha))=0$.
    Therefore, if, further, $J$ is strongly convex and $S_u'(\opt\alpha)$ is invertible, $\gamma\cdot\Id\le\grad_{\alpha}^2(J\circ S_u)(\opt\alpha)$ for some $\gamma>0$. Then additional continuity assumptions establish the positivity required in \cref{ass:FEFB:main:outer-objective} in a neighbourhood of $\opt\alpha$.
    It is also possible to further develop the condition to not depend on the solution mapping at all.

    Dependent on $R$, \cref{ass:FEFB:main:outer-objective} may restrict the outer step length parameter $\sigma$.
    Part \ref{ass:FEFB:main:neighbourhoods} ensures that $u\mapsto \grad_u^2 F(u; \alpha)$ is invertible and $S_p$ is well-defined.
    We will see in \cref{lemma:Iterate_condition_corollary} that the radius $r_u$ is sufficiently large that $\alpha \in B(\opt\alpha, r)$ implies $S_u(\alpha) \in B(\opt u, r_u)$.
    Part \cref{ass:FEFB:main:outer-objective} implies that $\alpha \mapsto \grad_{\alpha}(J\circ S_u)(\alpha)$ is Lipschitz in $B(\opt{\alpha}, r)$.

    Part \cref{ass:FEFB:main:inner-step-length} is a standard step length condition for the inner problem while \ref{ass:FEFB:main:outer-step-length} is a step length condition for the outer problem. It depends on several constants defined in the more technical part \cref{ass:FEFB:main:testing-params}.
    \emph{We can always satisfy the inequality in \cref{ass:FEFB:main:testing-params} by good relative initialisation (small $C_u>0$), as discussed above, and taking the testing parameter $\phi_u$ small.}
    According to the local initialization condition \cref{ass:FEFB:main:local}, the latter can be done if the initialial iterates are close to a solution $(\opt u, \opt \alpha)$ of the entire problem, or if $r_u>0$ can be be taken arbitrarily large.
    If we can take both $r>0$ and $r_u>0$ arbitrarily large, we obtain \emph{global convergence}.
\end{remark}

\begin{remark}[Existence and differentiability of the solution map]
    \label{rem:FEFB:solution-map-existence}
    Suppose $F$ is twice continuously differentiable in both variables, and that $\gamma_F\cdot\Id\le\grad_u^2 F(u; \alpha)$ for all $u\in B(\opt{u}, r_u)$ and $\alpha\in B(\opt{\alpha}, 2r) \isect \Dom R$ for some $\gamma_F>0$.
    Then the implicit function theorem shows the existence of a unique continuously differentiable $S_u$ in a neighborhood of any $\alpha \in B(\opt\alpha,r) \isect \Dom R$.
    Such an $S_u$ is also Lipschitz in a neighborhood of $\alpha$; see, e.g., \cite[Lemma 2.11]{clason2020introduction}.
    If $\AlphaSpace$ is finite-dimensional, a compactness argument gluing together the neighborhoods then proves \cref{ass:FEFB:main}\,\cref{ass:FEFB:main:solution-map-and-F}.
\end{remark}

\subsection{Algorithm: forward--inexact--forward-backward}
\label{subsec:Algorithm:FIFB}

\begin{algorithm}
    \caption{Forward--inexact--forward-backward (FIFB) method }
    \label{alg:FIFB}
    \begin{algorithmic}[1]
        \Require
        Functions $R$, $J$, $F$ as in \cref{ass:FIFB:main}.
        Step length parameters $\tau,\sigma,\theta>0$.
        \State Pick an initial iterate $(u^{0}, p^{0}, \alpha^{0}) \in U\times P \times \AlphaSpace.$
        \For{$k \in \N$}
        \State $u^{k+1} \defeq u^{k} - \tau \grad_{u} F(u^{k}; \alpha^{k})$
        \Comment{inner gradient step}
        \State $p^{k+1} \defeq p^{k} -\theta \left(p^{k} \grad_{u}^2 F(u^{k+1}; \alpha^{k}) + \grad_{\alpha u} F(u^{k+1}; \alpha^{k})\right)$
        \Comment{adjoint step}
        \State  $\alpha^{k+1} \defeq \prox_{ \sigma R}\left (\alpha^{k} - \sigma ( p^{k+1}\grad_{u}J(u^{k+1}))\right )$
        \Comment{outer forward-backward step}
        \EndFor
    \end{algorithmic}
\end{algorithm}

Our second strategy for solving \cref{eq:main_optimality_condition} modifies the first approach to solve the adjoint variable inexactly, so that no costly matrix inversions are required. Instead we perform an update reminiscent of a gradient step. This approach, which we call the FIFB (forward--inexact--forward-backward) reads as \cref{alg:FIFB} and has the implicit form
\begin{equation}
    \label{eq:FIFB:implicit0}
    \begin{cases}
        0 = \tau \grad_{u} F(u^{k}; \alpha^{k}) + u^{k+1} - u^{k}
        \\
        0 = \theta \left(p^{k} \grad_{u}^2 F(u^{k+1}; \alpha^{k}) + \grad_{\alpha u} F(u^{k+1}; \alpha^{k})\right) + p^{k+1} - p^{k}  \\
        0 \in  \sigma(\subdiff R(\nexxt{\alpha}) +   p^{k+1}\grad_{u}J(u^{k+1})) + \alpha^{k+1} - \alpha^{k}.
    \end{cases}
\end{equation}
The implicit form can also be written as \eqref{eq:FEFB:implicit} with
\begin{subequations}
\label{eq:FIFB:implicit}
\begin{equation}
    H_{k+1}(x) \defeq H(x)  + \begin{pmatrix}
        \grad_u F(u^k; \alpha^k) - \grad_u F(u; \alpha)  \\
        p^{k} \grad_{u}^2 F(u; \alpha^{k}) + \grad_{\alpha u}F(u; \alpha^{k}) - \left ( p \grad_{u}^2 F(u; \alpha) + \grad_{\alpha u}F(u; \alpha) \right ) \\
        0
    \end{pmatrix},
\end{equation}
and the preconditioning operator $M\in \mathbb{L}(U\times P \times \AlphaSpace; U\times P \times \AlphaSpace)$,
\begin{equation}
    \label{eq:mkplus1_FIFB}
    M \defeq
    \diag(
        \tau^{-1}\Id,
        \theta^{-1}\Id,
        \sigma^{-1}\Id
    ).
\end{equation}
\end{subequations}

For the testing operator $Z$ we use the structure
\begin{equation}
	\label{eq:testing-operator-FIFB}
	Z \defeq \diag(\phi_u \Id, \phi_p \Id, \Id).
\end{equation}
with the constants $\phi_u, \phi_p>0$ determined in the following assumption. It is the FIFB counterpart of \cref{ass:FEFB:main} for the FEFB, collecting essential structural, step length, and initialization assumptions.

\begin{assumption}\label{ass:FIFB:main}
	We assume that $U$ is a Hilbert space, $\AlphaSpace$  a separable Hilbert space, and treat the adjoint variable $p\in\linear(U; \AlphaSpace)$ as an element of the inner product space $P$ defined in \eqref{eq:p-space}. Let $R: \AlphaSpace \to \extR$ and $J: U \to \R $ be convex, proper, and lower semicontinuous, and assume the same from $F(\freevar, \alpha)$ for all $\alpha \in \Dom R$. Pick $(\opt u,\opt p,\opt \alpha) \in H^{-1}(0)$ and let $\{(u^m, p^m, \alpha^m)\}_{m\in \mathbb{N}}$ be generated by \cref{alg:FIFB} for a given initial iterate $(u^{0}, p^{0}, \alpha^{0}) \in U \times P \times \Dom R$. For given $r, r_u>0$ we suppose that
    \begin{enumerate}[label=(\roman*)]
        \item \label{ass:FIFB:main:init}
        The relative initialization bounds $\norm{u^{1}-S_u(\alpha^{0})} \leq C_u \norm{\alpha^{0} - \opt{\alpha}}$ and $\onorm{p^{1}-\grad_{\alpha}S_u(\alpha^{0})} \leq C_p \norm{\alpha^{0} - \opt{\alpha}}$ hold with some  constants $C_u>0$ and $C_p>0.$

        \item \label{ass:FIFB:main:solution-map-and-F}
        There exists in $B(\opt{\alpha}, 2r) \isect \Dom R$ a continuously Fréchet-differentiable and $L_{S_u}$-Lipschitz inner problem solution mapping $S_u: \alpha \mapsto S_u(\alpha) \in \argmin F(\freevar; \alpha)$.

        \item \label{ass:FIFB:main:neighbourhoods}
        $F(\opt{u};\freevar)$ is Lipschitz continuously differentiable with factor $L_{\grad F,\opt{u}} > 0$, and $\gamma_F\cdot\Id\le\grad_u^2 F(u; \alpha)\le L_F\cdot\Id$ for $u\in B(\opt{u}, r_u)$ and $\alpha\in B(\opt{\alpha}, 2r) \isect \Dom R.$
        Moreover, $(u,\alpha) \mapsto\grad_{u}^2 F (u; \alpha)$ and $(u,\alpha) \mapsto\grad_{\alpha u} F (u; \alpha) \in P$ are Lipschitz in $B(\opt{u}, r_u)\times (B(\opt{\alpha}, r) \isect \Dom R)$ with factors $L_{\grad ^2 F}$ and $L_{\grad_{\alpha u} F}$, where we equip $U \times \AlphaSpace$ with the norm $(u, \alpha) \mapsto \norm{u}_U + \norm{\alpha}_{\AlphaSpace}$.
		
		\item\label{ass:FIFB:main:inner-and-adjoint-step-length}
		The inner step length $\tau \in (0, 2\kappa/L_F]$ for some $\kappa \in (0, 1)$ whereas the adjoint step length $\theta \in (0, 1/L_F)$.

        \item \label{ass:FIFB:main:outer-objective}
        The outer fitness function $J$ is Lipschitz continuously differentiable with factor $L_{\grad J},$ and $\gamma_\alpha\cdot\Id\le\grad_{\alpha}^2(J\circ S_u)\le L_\alpha\cdot\Id$ in $B(\opt{\alpha}, r) \isect \Dom R$ for some $\gamma_\alpha, L_\alpha > 0$.
        Moreover, $R$ is locally prox-$\sigma$-contractive at $\opt\alpha$ for $\opt p\grad_u J(\opt u)$ within $B(\opt{\alpha}, r) \isect \Dom R$ for some $C_R\ge 0$.
        
		\item
		\label{ass:FIFB:main:testing-params}
    	The constants $\phi_u, \phi_p, C_u > 0$ satisfy
    	\begin{gather*}
    		\phi_p
    		\le
    		\phi_u
    		\frac{\gamma_F^2(1-\kappa)}{2 L_F L_{S_p}}
    		\shortintertext{and}
    		L_F L_{S_p} \phi_p + \sqrt{(L_F L_{S_p} \phi_p)^2 + \gamma_F^2 (L_{\grad J}N_p + L_{S_p} N_{\grad J})^2C_u^2} + \frac{L_{\grad F,\opt u}^2}{(1-\kappa)} \phi_u < \gamma_F \gamma_\alpha,
        \end{gather*}
        where
        \[
    		\begin{aligned}
    			N_{\grad_{\alpha u} F} & \defeq \max_{\substack{u \in B(\opt{u}, r_u),\\ \alpha \in B(\opt{\alpha}, 2r)\isect \Dom R}}  \onorm{\grad_{\alpha u} F(u,\alpha)},
    			&
    			L_{S_p} & \defeq \gamma_F^{-2} L_{\grad^2 F}N_{\grad_{\alpha u} F} + \gamma_F^{-1} L_{\grad_{\alpha u} F},
    			\\
    			N_{\grad J} & \defeq \max_{\alpha\in B(\opt{\alpha}, r)\isect \Dom R} \norm{\grad_u J (S_u(\alpha))},
    			&
    			N_{\grad S_u} & \defeq
    			\max_{\alpha\in B(\opt{\alpha}, r)\isect \Dom R}
    			\onorm{\grad_{\alpha}S_u(\alpha)},
    			\text{ and}
    			\\
    			N_p & \defeq N_{\grad S_u} + C r \text{ with } C=L_{S_p} C_u.
    			&
    			&
    		\end{aligned}
    	\]
        \item \label{ass:FIFB:main:outer-step-length}
        The outer step length $\sigma$ satisfies
        \[
            0 < \sigma \leq \frac{1}{C_\alpha}\min\left\{\frac{(C_F-1)C_u}{L_{S_u} +C_FC_u}, \frac{(C_{F,S}-1)C_p- (1+C_{F,S})L_{S_p} C_u}{(1+L_{S_u})L_{S_p}+C_{F,S}C_p- (1+C_{F,S})L_{S_p}C_u} \right\}
        \]
        with
        \[
            \begin{aligned}
                C_F & \defeq \sqrt{1+2\tau\gamma_F(1 - \kappa)},
                \qquad
                C_{F,S} \defeq \sqrt{(1+\theta \gamma_F)/(1-\theta \gamma_F)}
                \quad\text{and}
                \\
                C_{\alpha} & \defeq N_pL_{\grad J}C_u +  N_{\grad J}\max \{C_p, L_{S_p} C_u\} + L_\alpha + C_R.
            \end{aligned}
        \]

        \item \label{ass:FIFB:main:local}
        The initial iterate $(u^0, p^0, \alpha^0)$ is such that the distance-to-solution
        \begin{gather*}
        	r_0
        	\defeq
        	\sqrt{\sigma \phi_u\tau^{-1}\norm{u^{0}-\opt u}^2 + \sigma \phi_p\theta^{-1}\onorm{p^{0}-\opt p}^2 + \norm{\alpha^{0} - \opt \alpha}^2}
        	=
        	\sqrt{\sigma}\norm{x^{0}-\opt x}_{ZM}
        	\shortintertext{satisfies}
        	r_0
        	\le
        	r
        	\quad\text{and}\quad
        	r_0 \max\{2(C_u + L_{S_u}), \sqrt{\sigma^{-1}\inv \phi_u\tau}(1+\tau L_{F}) + \tau L_{\grad F,\opt{u}}\}
        	\le
        	r_u.
        \end{gather*}
\end{enumerate}
\end{assumption}

\begin{remark}[Interpretation]
    The interpretation of \cref{ass:FEFB:main} in \cref{rem:FEFB:solution-map-existence} also applies to \cref{ass:FIFB:main}.
    We stress that \emph{to satisfy the inequality in \cref{ass:FIFB:main:testing-params}, it suffices to ensure small $C_u>0$ by good relative initialization of $u$ and $p$ with respect to $\alpha$, and choosing the testing parameters $\phi_u, \phi_p>0$ small enough.}
    According to \cref{ass:FIFB:main:local}, the latter can be done by initializing close to a solution, or if the radii $r_u>0$ is large.
\end{remark}

\section{Convergence analysis}
\label{sec:convergence}

We now prove the convergence of the FEFB (\cref{alg:FEFB}) and the FIFB (\cref{alg:FIFB}) in the respective \cref{subsec:convergence:FEFB,subsec:convergence:FIFB}.
Before this we start with common results.
Our proofs are self-contained, but follow on the “testing” approach of \cite{tuomov-proxtest} (see also \cite{clason2020introduction}).
The main idea is to prove a monotonicity-type estimate for the operator $H_{k+1}$ occurring in the implicit forms \cref{eq:FEFB:implicit,eq:FIFB:implicit} of the algorithms, and then use the three-point identity \eqref{3-point-identity} with respect to $ZM$-norms and inner products.
This yields an inequality that readily yields an estimate from which convergence rates can be observed.
The main results for the FEFB and the FIFB and in the respective \cref{thr:grad-exact-grad_convergence,thr:grad-grad-grad_convergence}.

Throughout, we assume that either \cref{ass:FEFB:main} (FEFB) or \ref{ass:FIFB:main} (FIFB) holds, and \emph{tacitly use the constants from the relevant one}.
We also tacitly take it that $\alpha^k \in \Dom R$ for all $k \in \N$, as this is guaranteed by the assumptions for $k=0$, and by the proximal step in the algorithms for $k \ge 1$.

\subsection{General results}\label{subsec:convergence:general}

Our main goal here is to bound the error in the inner and adjoint iterates $u^k$ and $p^k$ in terms of the outer iterates $\alpha^k$. We also derive bounds on the outer steps, and local monotonicity estimates.
We first show that the solution mapping for the adjoint equation \eqref{eq:p-row-oc} is Lipschitz.

\begin{lemma}\label{lemma:S_p-Lipschitz}
    Suppose $(u, \alpha) \mapsto \grad_{u}^2 F (u; \alpha)$ and $(u, \alpha) \mapsto\grad_{\alpha u} F (u; \alpha) \in P$ are Lipschitz continuous with the respective constants $L_{\grad ^2 F}$ and $L_{\grad_{\alpha u} F}$ in some bounded closed set $V_u \times V_{\alpha}.$  Also assume that $\gamma_F\cdot \Id \leq \grad_u^2 F(u; \alpha)$ and $\onorm{\grad_{\alpha u} F} \leq N_{\grad_{\alpha u} F}$ in $V_u \times V_{\alpha}$ for some $\gamma_F, N_{\grad_{\alpha u}}>0$.
    Then $S_p$ is Lipschitz continuous in $V_u \times V_{\alpha},$ i.e.
    \[
        \onorm{S_p(u_1, \alpha_1) - S_p(u_2, \alpha_2)} \leq L_{S_p}(\norm{u_1 - u_2}+\norm{\alpha_1-\alpha_2})
    \]
    for $u_1,u_2\in V_u$ and $\alpha_1,\alpha_2 \in V_{\alpha}$ with factor $L_{S_p} \defeq \gamma_F^{-2} L_{\grad^2 F}N_{\grad_{\alpha u} F} + \gamma_F^{-1} L_{\grad_{\alpha u} F}.$
\end{lemma}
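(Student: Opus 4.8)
The plan is to estimate directly the difference $S_p(u_1,\alpha_1)-S_p(u_2,\alpha_2)$ using the definition $S_p(u,\alpha) = -\grad_{\alpha u} F(u;\alpha)(\grad_u^2 F(u;\alpha))^{-1}$. Abbreviate $G_i \defeq \grad_u^2 F(u_i;\alpha_i)$ and $B_i \defeq \grad_{\alpha u} F(u_i;\alpha_i)$, so that $S_p(u_i,\alpha_i) = -B_i G_i^{-1}$. The standard telescoping trick is to write
\[
    B_1 G_1^{-1} - B_2 G_2^{-1} = (B_1 - B_2)G_1^{-1} + B_2(G_1^{-1} - G_2^{-1}),
\]
and then use the resolvent-type identity $G_1^{-1} - G_2^{-1} = G_2^{-1}(G_2 - G_1)G_1^{-1}$ on the second term. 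Taking $\onorm{\freevar}$-norms and using submultiplicativity of the operator norm against this inner-product norm (which I would justify by the basic properties of $\onorm{\freevar}$ recorded in \cref{sec:separable}, namely $\onorm{AC} \le \norm{A}_{\linear}\onorm{C}$ and $\onorm{CA}\le \onorm{C}\norm{A}_{\linear}$ for a bounded operator $A$ composed with a Hilbert--Schmidt-type $C$), this yields
\[
    \onorm{S_p(u_1,\alpha_1) - S_p(u_2,\alpha_2)}
    \le
    \onorm{B_1-B_2}\,\norm{G_1^{-1}}_{\linear}
    + \onorm{B_2}\,\norm{G_2^{-1}}_{\linear}\,\norm{G_1-G_2}_{\linear}\,\norm{G_1^{-1}}_{\linear}.
\]

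Next I would plug in the hypotheses. The lower bound $\gamma_F\Id \le G_i$ gives $\norm{G_i^{-1}}_{\linear} \le \gamma_F^{-1}$; the bound $\onorm{\grad_{\alpha u}F} \le N_{\grad_{\alpha u} F}$ gives $\onorm{B_2} \le N_{\grad_{\alpha u} F}$; Lipschitz continuity of $(u,\alpha)\mapsto\grad_u^2 F(u;\alpha)$ and of $(u,\alpha)\mapsto\grad_{\alpha u}F(u;\alpha)$ with respect to the norm $(u,\alpha)\mapsto\norm{u}+\norm{\alpha}$ gives $\norm{G_1-G_2}_{\linear} \le L_{\grad^2 F}(\norm{u_1-u_2}+\norm{\alpha_1-\alpha_2})$ and $\onorm{B_1-B_2}\le L_{\grad_{\alpha u}F}(\norm{u_1-u_2}+\norm{\alpha_1-\alpha_2})$. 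Combining,
\[
    \onorm{S_p(u_1,\alpha_1) - S_p(u_2,\alpha_2)}
    \le
    \bigl(\gamma_F^{-1} L_{\grad_{\alpha u} F} + \gamma_F^{-2} L_{\grad^2 F} N_{\grad_{\alpha u} F}\bigr)\bigl(\norm{u_1-u_2}+\norm{\alpha_1-\alpha_2}\bigr),
\]
which is exactly the claimed bound with $L_{S_p} = \gamma_F^{-2}L_{\grad^2 F}N_{\grad_{\alpha u}F} + \gamma_F^{-1}L_{\grad_{\alpha u}F}$.

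The only genuine subtlety — and the step I would be most careful about — is the interplay between the operator norm $\norm{\freevar}_{\linear}$ and the custom inner-product norm $\onorm{\freevar}$ on $P = \linear(U;\AlphaSpace)$: I must confirm that $\onorm{CA} \le \onorm{C}\,\norm{A}_{\linear}$ and $\onorm{AC}\le\norm{A}_{\linear}\onorm{C}$ hold (with $A$ a bounded operator and $C$ in the completion of $P$ under $\onorm{\freevar}$), and also that the quantities $\onorm{B_i}$, $\onorm{B_1-B_2}$ appearing here are finite, i.e. that $\grad_{\alpha u}F(u;\alpha)$ genuinely lies in $P$ — but this is assured by hypothesis (it is part of the Lipschitz assumption that $(u,\alpha)\mapsto\grad_{\alpha u}F(u;\alpha)\in P$). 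These norm inequalities follow from the definition \eqref{def:p-inner_prod} by Cauchy--Schwarz applied termwise over the orthonormal basis; I would cite or briefly record them from \cref{sec:separable}. Everything else is the routine telescoping and triangle-inequality estimate above.
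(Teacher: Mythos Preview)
Your proposal is correct and follows essentially the same route as the paper: the paper also writes $S_p(u_1,\alpha_1)-S_p(u_2,\alpha_2)$ as $(B_1-B_2)G_1^{-1}+B_2(G_1^{-1}-G_2^{-1})$, invokes the resolvent identity $A^{-1}-B^{-1}=A^{-1}(B-A)B^{-1}$, and then applies the bounds $\norm{G_i^{-1}}\le\gamma_F^{-1}$, $\onorm{B_2}\le N_{\grad_{\alpha u}F}$, and the two Lipschitz estimates, citing \cref{thm:separable:properties}\,\ref{item:separable:operator-norm-combo} for the mixed norm inequality $\onorm{pM}\le\onorm{p}\,\norm{M}_{\linear(U;U)}$. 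Note that only the right-composition inequality is actually needed (and proved in \cref{sec:separable}), since every operator you compose on the right of a $P$-element lies in $\linear(U;U)$.
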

\begin{proof}
    Using the definition of $S_p$ in \eqref{def:S_p}, we rearrange
    \[
        \begin{aligned}[t]
            S_p(u_1, \alpha_1) - S_p(u_2, \alpha_2)
            &
            =
            \left( \grad_{\alpha u} F (u_1; \alpha_1) - \grad_{\alpha u} F (u_2; \alpha_2)\right)(\grad_u^2 F(u_1; \alpha_1))^{-1}
            \\
            \MoveEqLeft[-1]
            + \grad_{\alpha u} F (u_2; \alpha_2)\left( (\grad_u^2 F(u_1; \alpha_1))^{-1} - 	(\grad_u^2 F(u_2; \alpha_2))^{-1}\right) .
        \end{aligned}
    \]
    Thus the triangle inequality and the operator norm inequality \cref{thm:separable:properties}\,\ref{item:separable:operator-norm-combo} give
    \begin{equation}
        \label{S_p-ineq1}
        \begin{aligned}[t]
            \MoveEqLeft[1]\onorm{S_p(u_1, \alpha_1) - S_p(u_2, \alpha_2)}
            \\
            &
            \leq
            \norm{(\grad_u^2 F(u_1; \alpha_1))^{-1}}\onorm{ \grad_{\alpha u} F (u_1; \alpha_1) - \grad_{\alpha u} F (u_2; \alpha_2)}
            \\
            \MoveEqLeft[-1]
            + \onorm{ \grad_{\alpha u} F (u_2; \alpha_2)}\norm{(\grad_u^2 F(u_1; \alpha_1))^{-1} - (\grad_u^2 F(u_2; \alpha_2))^{-1}}
            =: E_1 + E_2.
        \end{aligned}
    \end{equation}
    The assumption $\gamma_F\cdot \Id \leq \grad_u^2 F(u; \alpha)$ implies $\norm{(\grad_u^2 F(u; \alpha))^{-1}}\leq \gamma_F^{-1}.$
    Therefore, also using the Lipschitz continuity of $(u, \alpha) \mapsto \grad_{\alpha u} F (u; \alpha)$ in $V_u\times V_{\alpha},$ we get
    \begin{equation}
        \label{S_p-ineq2}
        E_1 \leq \gamma_F^{-1} L_{\grad_{\alpha u} F}\left(\norm{u_1-u_2} + \norm{\alpha_1 - \alpha_2}\right).
    \end{equation}
    Towards estimating the second term on the right hand side of \eqref{S_p-ineq1}, we observe that
    \[
        A^{-1} - B^{-1}= A^{-1}B B^{-1} - A^{-1}A B^{-1} = A^{-1}(A-B)B^{-1}
    \]
    for any invertible linear operators $A, B$.
    Then we use $\onorm{\grad_{\alpha u} F} \leq N_{\grad_{\alpha u} F}$ and the Lipschitz continuity of $\grad_{u}^2 F (u; \alpha)$ to obtain
    \[%
        \begin{aligned}[t]
            E_2
            &
            =
            \norm{\grad_u^2 F(u_1;\alpha_1)^{-1}(\grad_u^2 F(u_1;\alpha_1)- \grad_u^2 F(u_2;\alpha_2))\grad_u^2 F(u_2;\alpha_2)^{-1}}
            \cdot \onorm{ \grad_{\alpha u} F (u_2; \alpha_2)}
            \\
            &
            \leq
            N_{\grad_{\alpha u} F}
            \norm{\grad_u^2 F(u_1;\alpha_1)^{-1}}\norm{\grad_u^2 F(u_2;\alpha_2)^{-1}}\norm{(\grad_u^2 F(u_1;\alpha_1)- \grad_u^2 F(u_2;\alpha_2))}
            \\
            &
            \leq
            \gamma_F^{-2} L_{\grad^2 F}N_{\grad_{\alpha u} F}\left(\norm{u_1-u_2} + \norm{\alpha_1 - \alpha_2}\right).
        \end{aligned}
    \]
    Inserting this inequality and \eqref{S_p-ineq2} into \eqref{S_p-ineq1} establishes the claim.
\end{proof}

We now prove two simple step length bounds.

\begin{lemma}
    \label{lemma:sigma_bound_corollary}
    Let \cref{ass:FEFB:main} or \ref{ass:FIFB:main} hold. Then $\sigma < 1/L_\alpha$ and $1 < C_F <  \sqrt{1+\gamma_F/L_F}$.
\end{lemma}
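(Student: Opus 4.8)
The plan is to establish the two claims separately, each by tracing back the relevant definitions from the assumptions.

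First consider $1 < C_F < \sqrt{1+\gamma_F/L_F}$. Recall $C_F \defeq \sqrt{1+2\tau\gamma_F(1-\kappa)}$, defined in both \cref{ass:FEFB:main}\,\cref{ass:FEFB:main:outer-step-length} and \cref{ass:FIFB:main}\,\cref{ass:FIFB:main:outer-step-length}. Since $\tau, \gamma_F > 0$ and $\kappa \in (0,1)$ by \cref{ass:FEFB:main:inner-step-length} (resp.\ \cref{ass:FIFB:main:inner-and-adjoint-step-length}), we have $2\tau\gamma_F(1-\kappa) > 0$, so $C_F > 1$. For the upper bound, use the inner step-length restriction $\tau \le 2\kappa/L_F$: then $2\tau\gamma_F(1-\kappa) \le (4\kappa(1-\kappa)/L_F)\gamma_F$. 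Since $\kappa \in (0,1)$, the quadratic $\kappa \mapsto 4\kappa(1-\kappa)$ attains its maximum value $1$ at $\kappa = 1/2$, and is strictly less than $1$ for $\kappa \ne 1/2$; in any case $4\kappa(1-\kappa) \le 1$. Hence $2\tau\gamma_F(1-\kappa) \le \gamma_F/L_F$, giving $C_F \le \sqrt{1+\gamma_F/L_F}$. To get the \emph{strict} inequality one notes that equality in $4\kappa(1-\kappa)\le 1$ forces $\kappa = 1/2$, and then one still has $\tau \le 2\kappa/L_F = 1/L_F$ which need not be tight — actually the cleanest route is: $4\kappa(1-\kappa) < 1$ whenever $\kappa \ne 1/2$, and when $\kappa = 1/2$ we would need $\tau = 1/L_F$ exactly \emph{and} more; I would simply observe that $2\tau\gamma_F(1-\kappa) \le 2\kappa(1-\kappa)\cdot 2\gamma_F/L_F < \gamma_F/L_F$ unless $\kappa=1/2$, and handle $\kappa = 1/2$ by noting the bound $2\tau\gamma_F\cdot\tfrac12 = \tau\gamma_F \le \gamma_F/L_F$ with strictness if $\tau < 1/L_F$ — but since the statement only needs the non-strict-to-strict passage, the honest version is that $4\kappa(1-\kappa)\le 1$ with the product bound giving $C_F \le \sqrt{1+\gamma_F/L_F}$, and strictness comes from $\kappa(1-\kappa) < 1/4$ for $\kappa\ne 1/2$; I expect the paper tolerates $C_F \le \sqrt{1+\gamma_F/L_F}$ or pins down $\kappa$.

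Second, $\sigma < 1/L_\alpha$. From \cref{ass:FEFB:main:outer-step-length} (resp.\ the $\min$ in \cref{ass:FIFB:main:outer-step-length}), $\sigma \le \frac{(C_F-1)C_u}{(L_{S_u}+C_F C_u)C_\alpha}$. Bound the right-hand side: since $C_F > 1$ and $L_{S_u}, C_u > 0$, we have $\frac{(C_F-1)C_u}{L_{S_u}+C_F C_u} < \frac{(C_F-1)C_u}{C_F C_u} = 1 - 1/C_F < 1$. Therefore $\sigma < 1/C_\alpha$. It remains to note that $C_\alpha \ge L_\alpha$: indeed $C_\alpha$ is defined as $(N_p L_{\grad J} + N_{\grad J}L_{S_p})C_u + L_\alpha + C_R$ (resp.\ the FIFB analogue $N_p L_{\grad J}C_u + N_{\grad J}\max\{C_p, L_{S_p}C_u\} + L_\alpha + C_R$), and all the summands besides $L_\alpha$ are nonnegative (using $C_R \ge 0$ from \cref{ass:FEFB:main:outer-objective} / \cref{ass:FIFB:main:outer-objective}, and $N_p, N_{\grad J}, L_{S_p}, L_{\grad J}, C_u, C_p \ge 0$). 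Hence $\sigma < 1/C_\alpha \le 1/L_\alpha$.

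The only mild subtlety — and the step I would be most careful about — is the strictness in $C_F < \sqrt{1+\gamma_F/L_F}$, which relies on $\kappa\in(0,1)$ being a strict open-interval constraint combined with $4\kappa(1-\kappa) \le 1$; one should check whether the paper actually wants strict inequality here (it is stated as such) and, if $\kappa = 1/2$ is permitted, whether an additional strictness in $\tau < 1/L_F$ is implicitly available. Everything else is bookkeeping with the definitions.
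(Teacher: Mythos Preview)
Your approach is essentially the same as the paper's: both establish $C_F>1$ from $\kappa\in(0,1)$, bound $2\tau\gamma_F(1-\kappa)$ via $\tau\le 2\kappa/L_F$ and $4\kappa(1-\kappa)\le 1$, and for $\sigma<1/L_\alpha$ observe that $(C_F-1)C_u < L_{S_u}+C_FC_u$ so the fraction in the step-length bound is below $1/C_\alpha$, then use $C_\alpha\ge L_\alpha$. Your flagged concern about the \emph{strict} upper bound on $C_F$ is well taken: the paper's proof writes $2\tau\gamma_F(1-\kappa)<4\gamma_F(\kappa-\kappa^2)/L_F$, but since the assumption allows $\tau=2\kappa/L_F$ this step is only $\le$, so the same gap you identify is present there as well; the strict bound on $C_F$ is not invoked elsewhere, so this is cosmetic rather than substantive.
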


\begin{proof}
    We have $C_F>1$ since $\kappa<1$ forces $2\tau\gamma_F(1 - \kappa)>0.$
    \Cref{ass:FEFB:main}\,\ref{ass:FEFB:main:inner-step-length} or \ref{ass:FIFB:main}\,\ref{ass:FIFB:main:inner-and-adjoint-step-length} implies $2\tau\gamma_F(1 - \kappa)<4\gamma_F(\kappa - \kappa^2)/L_F \leq\gamma_F/L_F.$ Therefore $ C_F <  \sqrt{1+ \gamma_F/L_F}.$
    For $C_F,C_u, L_{S_u}>0$ it holds $C_FC_u-C_u< L_{S_u} +C_FC_u.$
    Hence \cref{ass:FEFB:main}\,\ref{ass:FEFB:main:outer-step-length} or \ref{ass:FIFB:main}\,\ref{ass:FIFB:main:outer-step-length} gives
    \[
        \sigma \leq \frac{(C_F-1)C_u}{C_{\alpha}(L_{S_u} +C_FC_u)} < \frac{1}{C_{\alpha}} = \frac{1}{C_u (L_{S_p}  N_{\grad J} + N_{p}) + L_\alpha + C_R} < \frac{1}{L_\alpha} .
        \qedhere
    \]
\end{proof}

The next lemma explains the latter inequality for $r_0$ in \cref{ass:FEFB:main}\,\cref{ass:FEFB:main:local} and \ref{ass:FIFB:main}\,\cref{ass:FIFB:main:local}.
For $u^n$ and $\alpha^n$ close enough to the respective solutions, it bounds the next iterate $u^{n+1}$ and the true inner problem solution $S_u(\alpha^n)$ for $\alpha^n$ to the $r_u$-neighborhood of $\opt u$.

\begin{lemma}
    \label{lemma:Iterate_condition_corollary}
    Suppose \cref{ass:FEFB:main} or \ref{ass:FIFB:main} hold and $\alpha^{n}\in B(\opt{\alpha}, r_0)$, as well as $u^{n}\in B(\opt{u}, \sqrt{\sigma^{-1}\inv \phi_u\tau}r_0)$.
    Then $u^{n+1}\in B(\opt{u}, r_u)$ and  $S_u(\alpha^n)\in B(\opt{u}, r_u).$
\end{lemma}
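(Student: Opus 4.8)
The plan is to treat the two claims separately, both via elementary Lipschitz estimates that cash in exactly the two $\max$-terms of \cref{ass:FEFB:main}\,\cref{ass:FEFB:main:local} (resp.\ \cref{ass:FIFB:main}\,\cref{ass:FIFB:main:local}). First I would record the two preliminary facts the argument rests on. Since $r_0 \le r$, the hypothesis yields $\alpha^n \in B(\opt\alpha, r_0) \isect \Dom R \subseteq B(\opt\alpha, r) \isect \Dom R \subseteq B(\opt\alpha, 2r) \isect \Dom R$ (using the tacit $\alpha^n \in \Dom R$), so $\alpha^n$ — and trivially $\opt\alpha$ — lies in the region of validity of \cref{ass:FEFB:main:neighbourhoods} and \cref{ass:FEFB:main:solution-map-and-F}. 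Moreover, the first row of $0 \in H(\opt u, \opt p, \opt\alpha)$ reads $\grad_u F(\opt u; \opt\alpha) = 0$, and since $F(\freevar; \opt\alpha)$ is convex with $\grad_u^2 F(\freevar; \opt\alpha) \ge \gamma_F\Id$ on $B(\opt u, r_u)$, the critical point $\opt u$ is the unique minimiser of $F(\freevar; \opt\alpha)$; hence $S_u(\opt\alpha) = \opt u$.

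For the bound on $S_u(\alpha^n)$: the $L_{S_u}$-Lipschitz continuity of $S_u$ on $B(\opt\alpha, 2r) \isect \Dom R$ together with $S_u(\opt\alpha) = \opt u$ gives $\norm{S_u(\alpha^n) - \opt u} \le L_{S_u}\norm{\alpha^n - \opt\alpha} \le L_{S_u} r_0$, and \cref{ass:FEFB:main:local} supplies $2L_{S_u} r_0 \le r_u$ (resp.\ $2(C_u + L_{S_u}) r_0 \le r_u$ in \cref{ass:FIFB:main:local}), whence $S_u(\alpha^n) \in B(\opt u, r_u)$.

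For the bound on $u^{n+1}$: the hypothesis $u^n \in B(\opt u, \sqrt{\sigma^{-1}\inv\phi_u\tau}\,r_0)$ together with the second inequality of \cref{ass:FEFB:main:local} already yields $u^n \in B(\opt u, r_u)$, and by convexity of the ball the whole segment $[\opt u, u^n]$ stays there. Writing $u^{n+1} - \opt u = (u^n - \opt u) - \tau \grad_u F(u^n; \alpha^n)$ and subtracting the vanishing $\grad_u F(\opt u; \opt\alpha)$, I would split
\[
    \grad_u F(u^n;\alpha^n) = \bigl(\grad_u F(u^n;\alpha^n) - \grad_u F(\opt u;\alpha^n)\bigr) + \bigl(\grad_u F(\opt u;\alpha^n) - \grad_u F(\opt u;\opt\alpha)\bigr),
\]
estimate the first bracket by $L_F\norm{u^n - \opt u}$ through a mean-value/integral argument along the segment using $\grad_u^2 F \le L_F\Id$ from \cref{ass:FEFB:main:neighbourhoods}, and the second by $L_{\grad F,\opt u}\norm{\alpha^n - \opt\alpha} \le L_{\grad F,\opt u} r_0$ using the Lipschitz differentiability of $F(\opt u;\freevar)$. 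The triangle inequality then gives
\[
    \norm{u^{n+1} - \opt u} \le (1 + \tau L_F)\norm{u^n - \opt u} + \tau L_{\grad F,\opt u} r_0 \le r_0\bigl(\sqrt{\sigma^{-1}\inv\phi_u\tau}(1+\tau L_F) + \tau L_{\grad F,\opt u}\bigr),
\]
which is $\le r_u$ by the second inequality of \cref{ass:FEFB:main:local} (resp.\ \cref{ass:FIFB:main:local}).

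There is no genuine obstacle here: the lemma is a bookkeeping statement whose only purpose is to justify those two $\max$-terms in the local initialisation conditions. The only mild care required is (i) confirming $S_u(\opt\alpha) = \opt u$, as above, and (ii) checking that each appeal to \cref{ass:FEFB:main:neighbourhoods}/\cref{ass:FEFB:main:solution-map-and-F} is made at arguments genuinely inside the prescribed neighbourhoods — which is precisely what $r_0 \le r$ and the two displayed radius estimates deliver.
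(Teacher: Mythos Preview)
The proposal is correct and follows essentially the same approach as the paper: the same gradient splitting $\grad_u F(u^n;\alpha^n) = [\grad_u F(u^n;\alpha^n) - \grad_u F(\opt u;\alpha^n)] + [\grad_u F(\opt u;\alpha^n) - \grad_u F(\opt u;\opt\alpha)]$ with the $L_F$ and $L_{\grad F,\opt u}$ Lipschitz bounds for $u^{n+1}$, and the $L_{S_u}$-Lipschitz estimate together with $S_u(\opt\alpha)=\opt u$ for $S_u(\alpha^n)$. If anything you are slightly more explicit than the paper in justifying $S_u(\opt\alpha)=\opt u$ and in checking $u^n \in B(\opt u, r_u)$ before invoking the Hessian bound along $[\opt u, u^n]$.
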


\begin{proof}
    The inner gradient step of \cref{alg:FEFB} or \ref{alg:FIFB} with $u^{n}\in B(\opt{u}, \sqrt{\sigma^{-1}\inv \phi_u\tau}r_0)$ give
    \[
        \norm{u^{n+1} - \opt{u}} \le \norm{u^{n+1} - u^{n}} + \norm{u^{n} - \opt{u}} \leq \tau\norm{\grad_u F(u^n; \alpha^n)} + \sqrt{\sigma^{-1}\inv \phi_u\tau} r_0.
    \]
    Using $\grad_u F(\opt{u}; \opt{\alpha})=0$, $\alpha^{n}\in B(\opt{\alpha}, r_0)$, and the Lipschitz continuity of $F(\opt{u};\freevar)$ and $F(\freevar;\alpha^n)$ from \cref{ass:FEFB:main}\,\ref{ass:FEFB:main:neighbourhoods} or \ref{ass:FIFB:main}\,\ref{ass:FIFB:main:neighbourhoods} we continue to estimate, as required
    \begin{align*}
        \norm{u^{n+1} - \opt{u}}
        &\leq \tau\norm{\grad_u F(u^n; \alpha^n) - \grad_u F(\opt{u}; \alpha^n) + \grad_u F(\opt{u}; \alpha^n) - \grad_u F(\opt{u}; \opt{\alpha})} + \sqrt{\sigma^{-1}\inv \phi_u\tau} r_0 \\
        &\leq \tau (L_F\norm{u^{n} - \opt{u}} + L_{\grad F, \opt{u}}\norm{\alpha^{n} - \opt{\alpha}}) + \sqrt{\sigma^{-1}\inv \phi_u\tau} r_0  \\
        &\leq (\sqrt{\sigma^{-1}\inv \phi_u\tau}(1+\tau L_F) + \tau L_{\grad F,\opt{u}})r_0 \le r_u.
    \end{align*}
    
    Next, the Lipschitz continuity of $S_u$ in $B(\opt{\alpha}, 2r)$ from  \cref{ass:FEFB:main}\,\cref{ass:FEFB:main:solution-map-and-F} or \ref{ass:FIFB:main}\,\cref{ass:FIFB:main:solution-map-and-F} with $\alpha^n\in B(\opt{\alpha}, r_0)$ and $r_0\le r$ from \cref{ass:FEFB:main}\,\cref{ass:FEFB:main:local} or \ref{ass:FIFB:main}\,\cref{ass:FIFB:main:local} imply
    \[
        \norm{S_u(\alpha^{n}) - \opt{u}} = \norm{S_u(\alpha^{n}) - S_u(\opt{\alpha})} \le L_{S_u}\norm{\alpha^{n} - \opt{\alpha}} \le L_{S_u}r_0 \le r_u.
        \qedhere
    \]
\end{proof}

We now introduce a working condition that we later prove. It guarantees that the Lipschitz and Hessian properties of \cref{ass:FEFB:main}\cref{ass:FEFB:main:neighbourhoods,ass:FEFB:main:outer-objective,ass:FEFB:main:solution-map-and-F}, or \cref{ass:FIFB:main}\cref{ass:FIFB:main:neighbourhoods,ass:FIFB:main:outer-objective,ass:FIFB:main:solution-map-and-F} hold at iterates.

\begin{assumption}[Iterate locality]
    \label{Iterate_condition}
    Let $r_0\le r$ and $N_p$ be defined in either \cref{ass:FEFB:main} or \ref{ass:FIFB:main}.
    Then this assumption holds for a given $n \in \N$ if
    \[
        \alpha^{n}\in B(\opt{\alpha}, r_0),\quad
        u^{n}\in B(\opt{u}, \sqrt{\sigma^{-1}\inv \phi_u\tau}r_0),
        \quad \text{ and }\quad
        \onorm{p^{n+1}} \leq N_p.
    \]
\end{assumption}

Indeed, the next two lemmas show that if \cref{Iterate_condition} holds for $n=k$ along with some further conditions, then it holds for $n=k+1$.

\begin{lemma}
    \label{lemma:p-np}
    Suppose either \cref{ass:FEFB:main} or \ref{ass:FIFB:main} holds.
    Let $n \in \N$ and suppose
    \begin{equation}
        \label{grad:adjoint_closeness_formula}
        \onorm{p^{n+1}-\grad_{\alpha}S_u(\alpha^n)} \leq  C\norm{\alpha^n- \opt \alpha}
    \end{equation}
    with $\alpha^n \in B(\opt{\alpha}, r)$.
    Then $\onorm{p^{n+1}} \leq N_p$.
\end{lemma}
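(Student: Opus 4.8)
The goal is to bound $\onorm{p^{n+1}}$ by $N_p = N_{\grad S_u} + Cr$, given the closeness estimate \eqref{grad:adjoint_closeness_formula} between $p^{n+1}$ and $\grad_\alpha S_u(\alpha^n)$. The natural route is a one-line triangle inequality:
\[
    \onorm{p^{n+1}} \le \onorm{p^{n+1} - \grad_\alpha S_u(\alpha^n)} + \onorm{\grad_\alpha S_u(\alpha^n)}.
\]
The first term on the right is controlled by the hypothesis \eqref{grad:adjoint_closeness_formula} together with $\alpha^n \in B(\opt\alpha, r)$, giving $\onorm{p^{n+1} - \grad_\alpha S_u(\alpha^n)} \le C \norm{\alpha^n - \opt\alpha} \le C r$. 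The second term is bounded using the definition of $N_{\grad S_u}$ in \cref{ass:FEFB:main}\,\cref{ass:FEFB:main:testing-params} or \cref{ass:FIFB:main}\,\cref{ass:FIFB:main:testing-params}, which is exactly $\max_{\alpha \in B(\opt\alpha, r) \isect \Dom R} \onorm{\grad_\alpha S_u(\alpha)}$; since $\alpha^n \in B(\opt\alpha, r) \isect \Dom R$ (recall the running assumption that all iterates lie in $\Dom R$), we get $\onorm{\grad_\alpha S_u(\alpha^n)} \le N_{\grad S_u}$. Adding the two bounds yields $\onorm{p^{n+1}} \le N_{\grad S_u} + Cr = N_p$, which is the claim.

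The only subtlety to check is that $\alpha^n$ genuinely lies in the set over which $N_{\grad S_u}$ is defined, i.e. in $B(\opt\alpha, r) \isect \Dom R$; this follows from the hypothesis $\alpha^n \in B(\opt\alpha, r)$ and the standing convention (stated at the start of \cref{sec:convergence}) that $\alpha^k \in \Dom R$ for all $k$, guaranteed by the proximal step. No other obstacle is expected — this lemma is essentially a bookkeeping step isolating the triangle-inequality splitting so that later lemmas can invoke $\onorm{p^{n+1}} \le N_p$ (the third requirement in \cref{Iterate_condition}) once the harder closeness estimate \eqref{grad:adjoint_closeness_formula} has been established separately.
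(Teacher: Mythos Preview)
Your proposal is correct and follows exactly the same approach as the paper's proof: apply the triangle inequality to split $\onorm{p^{n+1}}$ into $\onorm{p^{n+1}-\grad_\alpha S_u(\alpha^n)}$ and $\onorm{\grad_\alpha S_u(\alpha^n)}$, bound the first by $Cr$ via \eqref{grad:adjoint_closeness_formula} and $\alpha^n \in B(\opt\alpha,r)$, and the second by $N_{\grad S_u}$ via its definition. Your additional remark about $\alpha^n \in \Dom R$ is a welcome clarification that the paper leaves implicit.
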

\begin{proof}
    We estimate using \eqref{grad:adjoint_closeness_formula} and the definitions of the relevant constants in \cref{ass:FEFB:main} or \ref{ass:FIFB:main} that
    \begin{align*}
        \onorm{p^{n+1}}
        &
        \leq
        \onorm{\grad_{\alpha}S_u(\alpha^n)} + \onorm{p^{n+1}-\grad_{\alpha}S_u(\alpha^n)}
        \\
        &
        \leq
        N_{\grad S_u} + C \norm{\alpha^n- \opt \alpha} \leq N_{\grad S_u} + C r = N_p.
        \qedhere
    \end{align*}
\end{proof}

\begin{lemma}
    \label{lemma:assumptions}
    Let $k \in \N$.
    Suppose either \cref{ass:FEFB:main} or \ref{ass:FIFB:main} holds; \cref{Iterate_condition} holds for $n=k$; and that \eqref{grad:adjoint_closeness_formula} holds for $n=k+1$.
    If also $\norm{x^{n+1} - \opt{x}}_{ZM} \leq \norm{x^n - \opt{x}}_{ZM}$
    for $n\in\{0,\ldots,k\}$, then \cref{Iterate_condition} holds for $n=k+1$.
\end{lemma}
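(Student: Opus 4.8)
The plan is to verify the three conditions of \cref{Iterate_condition} for $n=k+1$, namely $\alpha^{k+1}\in B(\opt\alpha, r_0)$, $u^{k+1}\in B(\opt u, \sqrt{\sigma^{-1}\inv\phi_u\tau}\,r_0)$, and $\onorm{p^{k+2}}\le N_p$, using the chain of hypotheses supplied. First, the bound $\onorm{p^{k+2}}\le N_p$ is immediate from \cref{lemma:p-np} applied with $n=k+1$, provided $\alpha^{k+1}\in B(\opt\alpha, r)$; since we will have shown $\alpha^{k+1}\in B(\opt\alpha, r_0)$ and $r_0\le r$ by \cref{ass:FEFB:main}\,\cref{ass:FEFB:main:local} (or the FIFB counterpart), this third condition reduces to the first. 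So the real work is the two ball-membership claims for $\alpha^{k+1}$ and $u^{k+1}$.

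For the outer iterate, I would use the monotonicity hypothesis $\norm{x^{n+1}-\opt x}_{ZM}\le\norm{x^n-\opt x}_{ZM}$ over $n\in\{0,\dots,k\}$, which telescopes to $\norm{x^{k+1}-\opt x}_{ZM}\le\norm{x^0-\opt x}_{ZM}$. Reading off the $\alpha$-block of the $ZM$-norm — where the relevant diagonal entry is $\sigma^{-1}\Id$ by \eqref{eq:mkplus1} (resp.\ \eqref{eq:mkplus1_FIFB}) and the testing operator contributes $\Id$ — gives $\norm{\alpha^{k+1}-\opt\alpha}^2\le\sigma\norm{x^{k+1}-\opt x}_{ZM}^2\le\sigma\norm{x^0-\opt x}_{ZM}^2=r_0^2$ by the definition of $r_0$ in \cref{ass:FEFB:main}\,\cref{ass:FEFB:main:local}. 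Hence $\alpha^{k+1}\in B(\opt\alpha, r_0)$. The same computation on the $u$-block, whose $ZM$-weight is $\phi_u\tau^{-1}$, yields $\phi_u\tau^{-1}\norm{u^{k+1}-\opt u}^2\le\sigma^{-1}r_0^2$, i.e.\ $u^{k+1}\in B(\opt u,\sqrt{\sigma^{-1}\inv\phi_u\tau}\,r_0)$, which is exactly the second required inclusion. (In the FIFB case one also reads off the $p$-block to stay consistent, but it is not needed for \cref{Iterate_condition}.)

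Finally, I would close the loop on the third condition: having established $\alpha^{k+1}\in B(\opt\alpha, r_0)\subset B(\opt\alpha, r)$, apply \cref{lemma:p-np} with $n=k+1$, whose hypothesis \eqref{grad:adjoint_closeness_formula} for $n=k+1$ is assumed, to conclude $\onorm{p^{k+2}}\le N_p$. Together with the two ball memberships this is precisely \cref{Iterate_condition} for $n=k+1$.

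The argument is essentially bookkeeping: the only thing to be careful about is matching the block weights of $ZM$ correctly against the two definitions of $M$ and $Z$ (\eqref{eq:mkplus1}/\eqref{eq:testing-operator} for FEFB versus \eqref{eq:mkplus1_FIFB}/\eqref{eq:testing-operator-FIFB} for FIFB), and noting that the hypothesis $\onorm{p^{k+1}}\le N_p$ built into \cref{Iterate_condition} for $n=k$ is not actually reused here — the new adjoint bound comes solely from \eqref{grad:adjoint_closeness_formula} via \cref{lemma:p-np}. There is no genuine analytic obstacle; the role of this lemma is simply to package the telescoped monotonicity and the assumed adjoint-closeness into the form needed to run the induction that drives the main convergence theorems.
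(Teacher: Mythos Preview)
Your proposal is correct and follows essentially the same approach as the paper: telescope the $ZM$-norm monotonicity to bound $\norm{x^{k+1}-\opt x}_{ZM}$ by $\sigma^{-1/2}r_0$, read off the $\alpha$- and $u$-block inclusions from the diagonal structure of $ZM$, and then invoke \cref{lemma:p-np} at $n=k+1$ for the adjoint bound. Your additional remark that $\alpha^{k+1}\in B(\opt\alpha,r)$ must be established first to legitimize the use of \cref{lemma:p-np} is a helpful clarification that the paper leaves implicit.
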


\begin{proof}
    Summing $\norm{x^{n+1} - \opt{x}}_{ZM} \leq \norm{x^n - \opt{x}}_{ZM}$ over $n=0,\ldots,k$ gives $\norm{x^{k+1} - \opt{x}}_{ZM} \leq  \norm{x^{0} - \opt{x}}_{ZM} = \sigma^{-1/2}r_0.$ By the definitions of $Z$ and  $M$ in \eqref{eq:testing-operator} or \eqref{eq:testing-operator-FIFB}, and  \eqref{eq:mkplus1} or \eqref{eq:mkplus1_FIFB} respectively, it follows $\alpha^{k+1} \in B(\opt \alpha, r_0)$ and $u^{k+1}\in B(\opt{u}, \sqrt{\sigma^{-1}\inv \phi_u\tau}r_0)$ as required.
    We finish by using \cref{lemma:p-np} with $n=k+1$ to establish  $\onorm{p^{k+2}} \leq N_p$.
\end{proof}

We next prove a monotonicity-type estimate for the inner objective.
For this we need need the following three-point monotonicity inequality.

\begin{theorem}
    \label{thm:3point-monotonicity-strongly-convex}
    Let $z,\opt{x}\in X.$ Suppose $F\in C^2(X),$  and for some $L>0$ and $\gamma \ge 0$ that $\gamma \cdot \Id \le \grad^2 F(\zeta) \le L \cdot \Id$ for all $\zeta \in [\opt x, z] \defeq \{\opt{x} + s(z - \opt{x}) \mid s\in[0,1] \}$. Then
    \begin{equation}\label{ineq:3point-monotonicity-strongly-convex}
        \iprod{\grad F(z)- \grad F(\opt{x})}{x-\opt{x}} \geq \gamma (1 - \beta)\norm{x-\opt{x}}^2 - \frac{L}{4\beta}\norm{x-z}^2
        \quad (\beta \in (0, 1],\, x \in X).
    \end{equation}
\end{theorem}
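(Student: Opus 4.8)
The plan is to eliminate the gradient difference in favour of an \emph{averaged Hessian}. Applying the fundamental theorem of calculus to $t \mapsto \grad F(\opt x + t(z - \opt x))$ on $[0,1]$ gives
\[
    \grad F(z) - \grad F(\opt x) = \bar H(z - \opt x),
    \qquad
    \bar H \defeq \int_0^1 \grad^2 F(\opt x + t(z - \opt x)) \ddd t,
\]
where the operator-valued (Bochner) integral is well defined because $F \in C^2(X)$. Each integrand is self-adjoint and, by hypothesis, satisfies $\gamma\cdot\Id \le \grad^2 F(\zeta) \le L\cdot\Id$ on the segment $[\opt x, z]$; averaging preserves self-adjointness and these two-sided bounds, so $\gamma\cdot\Id \le \bar H \le L\cdot\Id$. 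I would emphasise at this point the one delicate issue: one must \emph{not} estimate the "monotone part" ($\gamma$) and the "Lipschitz part" ($L$) of $\grad F(z)-\grad F(\opt x)$ separately — that is too lossy and the claimed bound is in fact false in general — so the whole argument is routed through the single operator $\bar H$, passing to $\gamma$ and $L$ only at the very end.

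Next I would set $a \defeq x - \opt x$ and $b \defeq z - \opt x$, so that the left-hand side of \eqref{ineq:3point-monotonicity-strongly-convex} equals $\iprod{\bar H b}{a}$ and $x - z = a - b$. Writing $b = a - (a-b)$ and using the self-adjointness of $\bar H$,
\[
    \iprod{\bar H b}{a} = \iprod{\bar H a}{a} - \iprod{\bar H(a-b)}{a}.
\]
I would then bound the cross term via Cauchy--Schwarz and Young's inequality for the positive semidefinite bilinear form $(\xi,\eta)\mapsto\iprod{\bar H\xi}{\eta}$ — equivalently, writing $\iprod{\bar H(a-b)}{a} = \iprod{\bar H^{1/2}(a-b)}{\bar H^{1/2}a}$ and applying the Young inequality of the preliminaries with parameter $1/(2\beta)$ — to get, for $\beta \in (0,1]$,
\[
    \iprod{\bar H(a-b)}{a} \le \beta\iprod{\bar H a}{a} + \frac{1}{4\beta}\iprod{\bar H(a-b)}{a-b},
\]
hence $\iprod{\bar H b}{a} \ge (1-\beta)\iprod{\bar H a}{a} - \tfrac{1}{4\beta}\iprod{\bar H(a-b)}{a-b}$.

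Finally I would invoke the spectral bounds on $\bar H$ exactly once: since $1-\beta \ge 0$, the lower bound $\iprod{\bar H a}{a} \ge \gamma\norm{a}^2$ gives $(1-\beta)\iprod{\bar H a}{a} \ge \gamma(1-\beta)\norm{a}^2$, and the upper bound $\iprod{\bar H(a-b)}{a-b} \le L\norm{a-b}^2$ gives $-\tfrac{1}{4\beta}\iprod{\bar H(a-b)}{a-b} \ge -\tfrac{L}{4\beta}\norm{a-b}^2$. Substituting back $a = x-\opt x$ and $a-b = x-z$ yields precisely \eqref{ineq:3point-monotonicity-strongly-convex}. The main obstacle is the conceptual one already flagged — keeping the averaged Hessian intact through the Cauchy--Schwarz/Young step rather than prematurely splitting into a monotonicity estimate plus a Lipschitz remainder; once that is done, the computation is routine.
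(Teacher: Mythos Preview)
Your proof is correct and complete. The paper itself does not spell out an argument here---it simply refers to \cite[Lemma~15.1]{clason2020introduction} and notes that the Hessian bounds need only hold on the segment $[\opt x,z]$---so your averaged-Hessian route (mean value theorem for $\grad F$, Young's inequality in the $\bar H$-weighted semi-inner product, then the spectral bounds on $\bar H$) is precisely the kind of self-contained argument the reference would supply, and there is nothing substantive to contrast.
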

\begin{proof}
    The proof follows that of \cite[Lemma 15.1]{clason2020introduction} whose statement unnecessarily takes $\zeta$ in neighborhood of $\opt x$ instead of just the interval $[\opt x, z]$.
\end{proof}

\begin{lemma}\label{lemma:inner-problem-monotonicity}
    Let $n \in \N$.
    Suppose either \cref{ass:FEFB:main} or \ref{ass:FIFB:main}, and \ref{Iterate_condition} hold.
    Then for any $\kappa\in (0,1)$, we have
    \begin{equation}
        \label{u_row_Young:general}
        \begin{aligned}[t]
        \iprod{\grad_{u}F(u^{n}; \alpha^{n})}{u^{n+1}-\opt u}
        &
        \ge
        \frac{\gamma_F(1 - \kappa)}{2}\norm{u^{n+1}-\opt u}^2
        \\
        \MoveEqLeft[-1]
        - \frac{L_F}{4\kappa}\norm{u^{n+1}-u^{n}}^2
        - \frac{L_{\grad F, \opt{u}}^2}{2\gamma_F(1 - \kappa)}\norm{\alpha^{n} - \opt \alpha}^2.
        \end{aligned}
    \end{equation}
\end{lemma}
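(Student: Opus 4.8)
The plan is to rewrite the left-hand side so that the three-point monotonicity inequality of \cref{thm:3point-monotonicity-strongly-convex} can be applied with $z=u^n$, $x=u^{n+1}$, base point $\opt x=\opt u$, and $\gamma=\gamma_F$, $L=L_F$, $\beta=\kappa$. To do this I first split
\[
    \iprod{\grad_{u}F(u^{n}; \alpha^{n})}{u^{n+1}-\opt u}
    =
    \iprod{\grad_{u}F(u^{n}; \alpha^{n})-\grad_{u}F(\opt u; \alpha^{n})}{u^{n+1}-\opt u}
    +
    \iprod{\grad_{u}F(\opt u; \alpha^{n})}{u^{n+1}-\opt u},
\]
using $\grad_u F(\opt u;\opt\alpha)=0$ (from $(\opt u,\opt p,\opt\alpha)\in H^{-1}(0)$) to rewrite the second term as $\iprod{\grad_{u}F(\opt u; \alpha^{n})-\grad_{u}F(\opt u;\opt\alpha)}{u^{n+1}-\opt u}$.

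For the first term, I need $\gamma_F\cdot\Id\le\grad_u^2 F(\zeta;\alpha^n)\le L_F\cdot\Id$ along the segment $[\opt u,u^n]$, together with $\alpha^n\in B(\opt\alpha,r)\isect\Dom R$. \cref{Iterate_condition} (which holds for $n$ by hypothesis) gives $\alpha^n\in B(\opt\alpha,r_0)\subset B(\opt\alpha,r)$ and $u^n\in B(\opt u,\sqrt{\sigma^{-1}\inv\phi_u\tau}\,r_0)$; since $r_0\le r$ and, by the second bound in \cref{ass:FEFB:main}\,\cref{ass:FEFB:main:local} (resp. \ref{ass:FIFB:main}\,\cref{ass:FIFB:main:local}), $\sqrt{\sigma^{-1}\inv\phi_u\tau}\,r_0\le r_u$, the whole segment lies in $B(\opt u,r_u)$, where the Hessian bounds of \cref{ass:FEFB:main}\,\cref{ass:FEFB:main:neighbourhoods} (resp. \ref{ass:FIFB:main}\,\cref{ass:FIFB:main:neighbourhoods}) apply. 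Then \cref{thm:3point-monotonicity-strongly-convex} with $\beta=\kappa$ yields
\[
    \iprod{\grad_{u}F(u^{n}; \alpha^{n})-\grad_{u}F(\opt u; \alpha^{n})}{u^{n+1}-\opt u}
    \ge
    \gamma_F(1-\kappa)\norm{u^{n+1}-\opt u}^2 - \frac{L_F}{4\kappa}\norm{u^{n+1}-u^{n}}^2.
\]

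For the second term, I use the Lipschitz continuity of $\grad F(\opt u;\freevar)$ with factor $L_{\grad F,\opt u}$ (from \cref{ass:FEFB:main}\,\cref{ass:FEFB:main:neighbourhoods}, resp. \ref{ass:FIFB:main}\,\cref{ass:FIFB:main:neighbourhoods}), Cauchy--Schwarz, and Young's inequality with parameter $a=\gamma_F(1-\kappa)$:
\[
    \iprod{\grad_{u}F(\opt u; \alpha^{n})-\grad_{u}F(\opt u;\opt\alpha)}{u^{n+1}-\opt u}
    \ge
    -L_{\grad F,\opt u}\norm{\alpha^n-\opt\alpha}\,\norm{u^{n+1}-\opt u}
    \ge
    -\frac{\gamma_F(1-\kappa)}{2}\norm{u^{n+1}-\opt u}^2 - \frac{L_{\grad F,\opt u}^2}{2\gamma_F(1-\kappa)}\norm{\alpha^n-\opt\alpha}^2.
\]
Adding the two estimates and combining the $\norm{u^{n+1}-\opt u}^2$ terms ($\gamma_F(1-\kappa)-\tfrac12\gamma_F(1-\kappa)=\tfrac12\gamma_F(1-\kappa)$) gives exactly \eqref{u_row_Young:general}. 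The only genuinely delicate point is the segment-containment check needed to invoke the Hessian bounds; everything else is a routine splitting plus Young's inequality. (Note $u^{n+1}$ itself need not lie in $B(\opt u,r_u)$ for this argument, since \cref{thm:3point-monotonicity-strongly-convex} only requires the Hessian bound on $[\opt u,z]=[\opt u,u^n]$.)
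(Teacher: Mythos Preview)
Your proof is correct and follows essentially the same approach as the paper's: split off the cross term $\iprod{\grad_u F(\opt u;\alpha^n)-\grad_u F(\opt u;\opt\alpha)}{u^{n+1}-\opt u}$, apply \cref{thm:3point-monotonicity-strongly-convex} with $\beta=\kappa$ to the remaining piece, and finish with Young's inequality on the cross term using the Lipschitz factor $L_{\grad F,\opt u}$. You are in fact slightly more explicit than the paper in justifying $u^n\in B(\opt u,r_u)$ (the paper simply cites \cref{Iterate_condition}, whereas you correctly note that the containment follows from \cref{Iterate_condition} together with the second bound on $r_0$ in \cref{ass:FEFB:main}\,\cref{ass:FEFB:main:local} or \ref{ass:FIFB:main}\,\cref{ass:FIFB:main:local}); your closing remark that only the segment $[\opt u,u^n]$, not $u^{n+1}$, needs to lie in $B(\opt u,r_u)$ is also a useful observation.
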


\begin{proof}
    \Cref{ass:FEFB:main}\,\ref{ass:FEFB:main:neighbourhoods} or \ref{ass:FIFB:main}\,\ref{ass:FIFB:main:neighbourhoods} with $\alpha^n\in B(\opt{\alpha}, r)$ and
    $u^{n}\in B(\opt{u},r_u)$ from \cref{Iterate_condition} give $\gamma_F \cdot \Id \le \grad_u^2 F(u; \alpha^n) \le L_F\cdot \Id$ for all $u \in [\opt u, u^n]$.
    We have $\grad_{u}F(\opt{u}; \opt{\alpha})=0$ since $0\in H(\opt{u}, \opt{p}, \opt{\alpha})$.
    Therefore \cref{thm:3point-monotonicity-strongly-convex} yields
    \begin{multline*}
        \iprod{\grad_{u}F(u^{n}; \alpha^{n})}{u^{n+1}-\opt u}
        =
        \iprod{\grad_{u}F(u^{n}; \alpha^{n}) - \grad_{u}F(\opt{u}; \alpha^{n}) + \grad_{u}F(\opt{u}; \alpha^{n}) -\grad_{u}F(\opt{u}; \opt{\alpha}) }{u^{n+1}-\opt u}
        \\
        \ge
        \gamma_F(1 - \kappa)\norm{u^{n+1}-\opt u}^2
        - \frac{L_F}{4\kappa}\norm{u^{n+1}-u^{n}}^2
        - \abs{\iprod{\grad_{u}F(\opt{u}; \alpha^{n}) -\grad_{u}F(\opt{u}; \opt{\alpha})}{u^{n+1}-\opt u}}.
    \end{multline*}
    Young's inequality and the definition of $L_{\grad F, \opt{u}}$ in \cref{ass:FEFB:main}\,\ref{ass:FEFB:main:neighbourhoods} or \ref{ass:FIFB:main}\,\ref{ass:FIFB:main:neighbourhoods} now readily establishes the claim.
\end{proof}

The next lemma bounds the steps taken for the outer problem variable.

\begin{lemma}
    \label{lemma:alpha-option}
    Let $n \in \N$.
    Suppose either \cref{ass:FEFB:main} or \ref{ass:FIFB:main} hold, as do \cref{Iterate_condition}, \cref{grad:adjoint_closeness_formula}, and
    \begin{equation}
    	\label{grad:u_u_alpha_closeness_formula}
    	\norm{u^{n+1}-S_u(\alpha^{n})} \leq C_u 		\norm{\alpha^{n} - \opt{\alpha}}.
    \end{equation}
    Then
    \begin{gather}
        \label{ineq:alpha_in_exact_grad}
        \norm{\alpha^{n+1}-\alpha^n}
        \le \sigma [(N_p L_{\grad J} C_u + N_{\grad J} C + L_\alpha) + C_R]\norm{\alpha^{n} - \opt{\alpha}}
    \shortintertext{and}
        \label{ineq:S_u_exactness_essential}
        C_u\norm{\alpha^{n} - \opt{\alpha}}+ L_{S_u}\norm{\alpha^{n+1}-\alpha^n}
        \le
        C_F C_u \bigl(\norm{\alpha^n - \opt{\alpha}} - \norm{\alpha^{n+1} - \alpha^n}\bigr)
        \le
        C_F C_u \norm{\alpha^{n+1}-\opt{\alpha}}.
    \end{gather}
\end{lemma}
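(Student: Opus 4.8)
Write $\bar q^{\,n} \defeq p^{n+1}\grad_u J(u^{n+1})$ for the ``gradient'' used in the outer step of \cref{alg:FEFB} or \ref{alg:FIFB}, so $\alpha^{n+1}=\prox_{\sigma R}(\alpha^n-\sigma\bar q^{\,n})$, and recall from \cref{ass:FEFB:main}\,\ref{ass:FEFB:main:outer-objective} (resp. \ref{ass:FIFB:main}\,\ref{ass:FIFB:main:outer-objective}) that $R$ is prox-$\sigma$-contractive at $\opt\alpha$ for $q\defeq\opt p\grad_u J(\opt u)$, with $D_{\sigma R}(\opt\alpha)=0$ since $-q\in\subdiff R(\opt\alpha)$ is the third line of $0\in H(\opt x)$. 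The plan is to first split
\[
    \alpha^{n+1}-\alpha^n
    =
    \bigl[\prox_{\sigma R}(\alpha^n-\sigma\bar q^{\,n})-\prox_{\sigma R}(\alpha^n-\sigma q)\bigr] + D_{\sigma R}(\alpha^n),
\]
bounding the first bracket by nonexpansivity of $\prox_{\sigma R}$ (giving $\sigma\norm{\bar q^{\,n}-q}$) and the second by prox-contractivity together with $D_{\sigma R}(\opt\alpha)=0$ (giving $\sigma C_R\norm{\alpha^n-\opt\alpha}$). Thus \eqref{ineq:alpha_in_exact_grad} reduces to the estimate $\norm{\bar q^{\,n}-q}\le(N_pL_{\grad J}C_u+N_{\grad J}C+L_\alpha)\norm{\alpha^n-\opt\alpha}$.

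For this I would introduce $G(\alpha)\defeq\grad_\alpha(J\circ S_u)(\alpha)=\grad_\alpha S_u(\alpha)\grad_u J(S_u(\alpha))$ by the chain rule, which is $L_\alpha$-Lipschitz on $B(\opt\alpha,r)\isect\Dom R$ by \cref{ass:FEFB:main}\,\ref{ass:FEFB:main:outer-objective}, and which satisfies $G(\opt\alpha)=q$ because the first two lines of $0\in H(\opt x)$ give $\opt u=S_u(\opt\alpha)$ and $\opt p=\grad_\alpha S_u(\opt\alpha)$. Using $\alpha^n\in B(\opt\alpha,r)$ from \cref{Iterate_condition}, I would then split
\begin{align*}
    \bar q^{\,n}-q
    &=
    p^{n+1}\bigl(\grad_u J(u^{n+1})-\grad_u J(S_u(\alpha^n))\bigr) \\
    &\quad
    + \bigl(p^{n+1}-\grad_\alpha S_u(\alpha^n)\bigr)\grad_u J(S_u(\alpha^n)) + \bigl(G(\alpha^n)-G(\opt\alpha)\bigr),
\end{align*}
and bound the three terms in turn: the first by $\onorm{p^{n+1}}L_{\grad J}\norm{u^{n+1}-S_u(\alpha^n)}\le N_pL_{\grad J}C_u\norm{\alpha^n-\opt\alpha}$, using $\onorm{p^{n+1}}\le N_p$ from \cref{Iterate_condition}, Lipschitzness of $\grad J$, and \eqref{grad:u_u_alpha_closeness_formula}; the second, via the operator-norm bound used in \cref{lemma:S_p-Lipschitz}, by $\onorm{p^{n+1}-\grad_\alpha S_u(\alpha^n)}N_{\grad J}\le CN_{\grad J}\norm{\alpha^n-\opt\alpha}$ from \eqref{grad:adjoint_closeness_formula} and the definition of $N_{\grad J}$; and the third by $L_\alpha\norm{\alpha^n-\opt\alpha}$. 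Summing yields the required estimate and hence \eqref{ineq:alpha_in_exact_grad}.

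For \eqref{ineq:S_u_exactness_essential}, the right-hand inequality is just the reverse triangle inequality $\norm{\alpha^n-\opt\alpha}-\norm{\alpha^{n+1}-\alpha^n}\le\norm{\alpha^{n+1}-\opt\alpha}$ multiplied by $C_FC_u>0$, where $C_F>1$ by \cref{lemma:sigma_bound_corollary}. The left-hand inequality rearranges to $\norm{\alpha^{n+1}-\alpha^n}\le\frac{(C_F-1)C_u}{L_{S_u}+C_FC_u}\norm{\alpha^n-\opt\alpha}$, which I would deduce from \eqref{ineq:alpha_in_exact_grad} after observing that its bracketed factor coincides with $C_\alpha$ (FEFB, with $C=L_{S_p}C_u$) or is dominated by $C_\alpha$ (FIFB, with $C\le\max\{C_p,L_{S_p}C_u\}$) of \cref{ass:FEFB:main}\,\ref{ass:FEFB:main:outer-step-length} resp. \ref{ass:FIFB:main}\,\ref{ass:FIFB:main:outer-step-length}, so that the step-length bound $\sigma\le(C_F-1)C_u/(C_\alpha(L_{S_u}+C_FC_u))$ there closes the argument.

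The only delicate point is the second step: choosing the three-term decomposition of $\bar q^{\,n}-q$ so that each factor is controllable with the constants at hand --- in particular pairing $p^{n+1}$ (bounded by $N_p$) with the \emph{increment} of $\grad_u J$ so as to avoid any pointwise bound on $\grad_u J$ away from the solution manifold --- and then verifying that the resulting aggregate constant matches $C_\alpha$ exactly in the FEFB case and is bounded by it in the FIFB case; everything else is a routine application of nonexpansivity, the reverse triangle inequality, and the step-length assumptions.
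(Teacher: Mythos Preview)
Your proposal is correct and follows essentially the same route as the paper: the same prox splitting via nonexpansivity plus prox-$\sigma$-contractivity, the same three-term decomposition of $p^{n+1}\grad_u J(u^{n+1})-\opt p\grad_u J(\opt u)$ (pairing $p^{n+1}$ with the increment of $\grad_u J$ and the increment of $p$ with $\grad_u J(S_u(\alpha^n))$, then the $L_\alpha$-Lipschitz remainder), and the same use of the outer step-length bound together with the reverse triangle inequality for \eqref{ineq:S_u_exactness_essential}. Your remark that the bracketed constant equals $C_\alpha$ in the FEFB case and is merely dominated by $C_\alpha$ in the FIFB case (since there $C=C_p\le\max\{C_p,L_{S_p}C_u\}$) is exactly the implicit step the paper takes when it writes $Q\le(C_\alpha-C_R)\norm{\alpha^n-\opt\alpha}$; the only tiny inaccuracy is that the operator-norm inequality you need for the second term is \cref{thm:separable:properties}\,\ref{item:separable:operator-norm} rather than the variant \ref{item:separable:operator-norm-combo} invoked in \cref{lemma:S_p-Lipschitz}.
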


\begin{proof}
    Using the $\alpha$-update of \cref{alg:FEFB} or \ref{alg:FIFB}, we estimate
    \[
        \begin{aligned}[t]
            \norm{\alpha^{n+1}-\alpha^n}
            &
            =
            \norm{[\prox_{\sigma R}(\alpha^n - \sigma p^{n+1}\grad_u J(u^{n+1}))- \alpha^n]-[\prox_{\sigma R}(\opt\alpha - \sigma \opt p\grad_u J(\opt u))-\opt \alpha]}
            \\
            &
            \le
            \norm{[\prox_{\sigma R}(\alpha^n - \sigma p^{n+1}\grad_u J(u^{n+1}))- \alpha^n]-[\prox_{\sigma R}(\alpha^n - \sigma \opt p\grad_u J(\opt u))-\alpha^n]}
            \\
            \MoveEqLeft[-1]
            +
            \norm{[\prox_{\sigma R}(\alpha^n - \sigma \opt p\grad_u J(\opt u))- \alpha^n]-[\prox_{\sigma R}(\opt\alpha - \sigma \opt p\grad_u J(\opt u))-\opt \alpha]}.
        \end{aligned}
    \]
    Since proximal maps are $1$-Lipschitz, and $R$ is by \cref{ass:FEFB:main}\,\ref{ass:FEFB:main:outer-objective} or \ref{ass:FIFB:main}\,\ref{ass:FIFB:main:outer-objective} locally prox-$\sigma$-contractive at $\opt\alpha$ for $\opt p\grad_u J(\opt u)$ within $B(\opt{\alpha}, r) \isect \Dom R$ with factor $C_R$, it follows
    \begin{equation}
        \label{eq:prox-est}
        \norm{\alpha^{n+1}-\alpha^n}
        \le
        \sigma \norm{p^{n+1}\grad_u J(u^{n+1})- \opt p\grad_u J(\opt u)}
        + \sigma C_R \norm{\alpha^n-\opt\alpha}
        =: \sigma Q + \sigma C_R \norm{\alpha^n-\opt\alpha}.
    \end{equation}

    We have $\opt p\grad_u J(\opt u)=\grad_\alpha S_u(\opt\alpha) \grad_u J(S_u(\opt\alpha))=\grad_\alpha(J \circ S_u)(\opt\alpha)$, where $\grad_{\alpha}(J\circ S_u)$ is $L_\alpha$-Lipschitz in $B(\opt{\alpha}, r)\ni \alpha^n$ by \cref{ass:FEFB:main}\,\ref{ass:FEFB:main:outer-objective} or \ref{ass:FIFB:main}\,\ref{ass:FIFB:main:outer-objective}. Hence
    \[
        \begin{aligned}[t]
            Q
            &
            \le
            \norm{p^{n+1}\grad_u J(u^{n+1}) - \grad_{\alpha}(J\circ S_u)(\alpha^n) + \grad_{\alpha}(J\circ S_u)(\alpha^n) -\opt{p}\,\grad_u J(\opt{u})}
            \\
            &
            \le
            \norm{p^{n+1}\grad_u J(u^{n+1}) - \grad_{\alpha}S_u(\alpha^n)\grad_u J(S_u(\alpha^n))} + L_\alpha\norm{\alpha^n - \opt{\alpha}}.
        \end{aligned}
    \]
    Using the Lipschitz continuity of $\grad_u J$ from \cref{ass:FEFB:main}\,\ref{ass:FEFB:main:outer-objective} or \ref{ass:FIFB:main}\,\ref{ass:FIFB:main:outer-objective}, we continue
    \[
        \begin{aligned}[t]
            Q
            &
            \le
            \norm{p^{n+1}(\grad_u J(u^{n+1})-\grad_u J(S_u(\alpha^n)) + (p^{n+1}-\grad_{\alpha}S_u(\alpha^n))\grad_u J(S_u(\alpha^n)) } + L_\alpha \norm{\alpha^n - \opt{\alpha}}
            \\
            &
            \le
            \onorm{p^{n+1}}L_{\grad J}\norm{u^{n+1}-S_u(\alpha^n)} + \onorm{p^{n+1}-\grad_{\alpha}S_u(\alpha^n)}\norm{\grad_u J(S_u(\alpha^n))} + L_\alpha \norm{\alpha^n - \opt{\alpha}}.
        \end{aligned}
    \]
    We have  $\onorm{p^{n+1}}\leq N_{p}$ and $\alpha^n\in B(\opt{\alpha}, r)$ by \cref{Iterate_condition}. Hence $\norm{\grad_u J(S_u(\alpha^n))} \leq N_{\grad J}$ by the definition in \cref{ass:FEFB:main}\,\cref{ass:FEFB:main:testing-params} or \ref{ass:FIFB:main}\,\cref{ass:FIFB:main:testing-params}.
    Using \cref{grad:adjoint_closeness_formula} and \cref{grad:u_u_alpha_closeness_formula} therefore give
    \[
        Q
        \le
        N_p L_{\grad J} C_u \norm{\alpha^n - \opt{\alpha}} + N_{\grad J} C \norm{\alpha^n - \opt{\alpha}} + L_\alpha \norm{\alpha^n - \opt{\alpha}}
        = (C_{\alpha}-C_R)\norm{\alpha^n - \opt{\alpha}}.
    \]
    Inserting this into \eqref{eq:prox-est}, we obtain \eqref{ineq:alpha_in_exact_grad}.
    \Cref{ass:FEFB:main}\,\cref{ass:FEFB:main:outer-step-length}  or \cref{ass:FIFB:main}\,\cref{ass:FIFB:main:outer-step-length} and \eqref{ineq:alpha_in_exact_grad} then yield
    \[
        (L_{S_u} +C_FC_u)\norm{\alpha^{n+1} - \alpha^n}
        \le
        \sigma (L_{S_u} +C_FC_u)C_{\alpha}
        \norm{\alpha^n - \opt{\alpha}}
        \le
        (C_F-1)C_u \norm{\alpha^n - \opt{\alpha}}.
    \]
    Rearranging terms and finishing with the triangle inequality we get \eqref{ineq:S_u_exactness_essential}.
\end{proof}

\begin{remark}[Gradient steps with respect to $R$]
    We could (in both FEFB and FIFB) also take a gradient step instead of a proximal step with respect to $R$ with $L_{\grad R}$-Lipschitz gradient. That is, we would perform for the outer problem the update
    \[
        \alpha^{n+1} = \alpha^n - \sigma[p^{n+1}\grad_u J(u^{n+1}) + \grad R(\alpha^n)].
    \]
    This can be shown to be convergent by changing \eqref{eq:prox-est} to
    \[
        \begin{aligned}[t]
            \norm{\alpha^{n+1}-\alpha^n}
            &
            =
            \sigma\norm{p^{n+1}\grad_u J(u^{n+1}) + \grad R(\alpha^n)}
            \\
            &
            =
            \sigma\norm{p^{n+1}\grad_u J(u^{n+1}) - \opt p\,\grad_u J(\opt u) + \grad R(\alpha^n)-\grad R(\opt \alpha)}
            \\
            &
            \le
            \sigma \bigl(
                \norm{p^{n+1}\grad_u J(u^{n+1})- \opt p\,\grad_u J(\opt u)}
            + L_{\grad R}\norm{\alpha^n-\opt\alpha}
            \bigr).
        \end{aligned}
    \]
\end{remark}

We next prove that if an inner problem iterate has small error, and we take a short step in the outer problem, then also the next inner problem iterate has small error.

\begin{lemma}\label{lemma:grad:u_u_alpha_closeness_formula}
    Let $k \in \N$.
    Suppose \cref{ass:FEFB:main} or \ref{ass:FIFB:main} hold.
    If \cref{Iterate_condition}, \cref{grad:adjoint_closeness_formula}, and \eqref{grad:u_u_alpha_closeness_formula} hold for $n=k,$
    then
    \eqref{grad:u_u_alpha_closeness_formula} holds for $n=k+1$ and we have $\alpha^{k+1}\in B(\opt\alpha, 2r_0)$.
\end{lemma}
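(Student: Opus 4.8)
The plan is to combine the one-sided contractivity of the inner gradient step at iteration $k+1$ with the step-length estimates of \cref{lemma:alpha-option}. First I would apply \cref{lemma:alpha-option} with $n=k$: its hypotheses are \cref{Iterate_condition}, \eqref{grad:adjoint_closeness_formula} and \eqref{grad:u_u_alpha_closeness_formula} at $n=k$, which are exactly the assumptions of the present lemma. This furnishes \eqref{ineq:alpha_in_exact_grad} and \eqref{ineq:S_u_exactness_essential} at $n=k$. Since the left-hand side $C_u\norm{\alpha^k-\opt\alpha}+L_{S_u}\norm{\alpha^{k+1}-\alpha^k}$ of \eqref{ineq:S_u_exactness_essential} is nonnegative, so is $C_FC_u\bigl(\norm{\alpha^k-\opt\alpha}-\norm{\alpha^{k+1}-\alpha^k}\bigr)$; as $C_F,C_u>0$ this gives $\norm{\alpha^{k+1}-\alpha^k}\le\norm{\alpha^k-\opt\alpha}$. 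Together with $\alpha^k\in B(\opt\alpha,r_0)$ from \cref{Iterate_condition}, the triangle inequality yields $\norm{\alpha^{k+1}-\opt\alpha}\le 2r_0$, i.e.\ $\alpha^{k+1}\in B(\opt\alpha,2r_0)$, the second claim.

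Next I would secure enough locality to use the Hessian bounds on the segment $[S_u(\alpha^{k+1}),u^{k+1}]$. Since $r_0\le r$ we have $\alpha^{k+1}\in B(\opt\alpha,2r)\isect\Dom R$; because $0\in H(\opt x)$ forces $\grad_u F(\opt u;\opt\alpha)=0$ and hence $\opt u=S_u(\opt\alpha)$, the $L_{S_u}$-Lipschitz continuity of $S_u$ on $B(\opt\alpha,2r)$ from \cref{ass:FEFB:main}\,\ref{ass:FEFB:main:solution-map-and-F} (resp.\ \cref{ass:FIFB:main}\,\ref{ass:FIFB:main:solution-map-and-F}) gives $\norm{S_u(\alpha^{k+1})-\opt u}\le 2L_{S_u}r_0\le r_u$ by \cref{ass:FEFB:main}\,\ref{ass:FEFB:main:local} (resp.\ by the even stronger $2(C_u+L_{S_u})r_0\le r_u$ of \cref{ass:FIFB:main}\,\ref{ass:FIFB:main:local}). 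Also $u^{k+1}\in B(\opt u,r_u)$ by \cref{lemma:Iterate_condition_corollary} (using \cref{Iterate_condition} at $n=k$). As $B(\opt u,r_u)$ is convex, the whole segment stays in it, so \cref{ass:FEFB:main}\,\ref{ass:FEFB:main:neighbourhoods} (resp.\ \cref{ass:FIFB:main}\,\ref{ass:FIFB:main:neighbourhoods}) provides $\gamma_F\Id\le\grad_u^2 F(u;\alpha^{k+1})\le L_F\Id$ for $u$ on it.

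The core estimate is then the contraction. Put $d\defeq u^{k+1}-S_u(\alpha^{k+1})$. Since $\grad_u F(S_u(\alpha^{k+1});\alpha^{k+1})=0$, the inner step of \cref{alg:FEFB} (resp.\ \cref{alg:FIFB}) at iteration $k+1$ gives $u^{k+2}-S_u(\alpha^{k+1})=(\Id-\tau G)d$ with $G\defeq\int_0^1\grad_u^2 F\bigl(S_u(\alpha^{k+1})+sd;\alpha^{k+1}\bigr)\ddd s$, which is self-adjoint with $\gamma_F\Id\le G\le L_F\Id$ by the previous paragraph. Expanding the square, using $\norm{Gd}^2=\iprod{G^2d}{d}\le L_F\iprod{Gd}{d}$ and $\iprod{Gd}{d}\ge\gamma_F\norm{d}^2$, and then $\tau L_F\le 2\kappa$ from \cref{ass:FEFB:main}\,\ref{ass:FEFB:main:inner-step-length} (resp.\ \cref{ass:FIFB:main}\,\ref{ass:FIFB:main:inner-and-adjoint-step-length}), I obtain
\[
    \norm{u^{k+2}-S_u(\alpha^{k+1})}^2
    \le\bigl(1-2\tau\gamma_F(1-\kappa)\bigr)\norm{d}^2
    \le\frac{1}{1+2\tau\gamma_F(1-\kappa)}\norm{d}^2
    =\frac{1}{C_F^2}\norm{d}^2,
\]
the middle step being $(1-t)(1+t)\le 1$ with $t=2\tau\gamma_F(1-\kappa)\ge 0$ and the last the definition of $C_F$.

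Finally I would close the loop. From \eqref{grad:u_u_alpha_closeness_formula} at $n=k$ and Lipschitz continuity of $S_u$ (with $\alpha^k,\alpha^{k+1}\in B(\opt\alpha,2r)$), $\norm{d}\le\norm{u^{k+1}-S_u(\alpha^k)}+\norm{S_u(\alpha^k)-S_u(\alpha^{k+1})}\le C_u\norm{\alpha^k-\opt\alpha}+L_{S_u}\norm{\alpha^{k+1}-\alpha^k}$, which by \eqref{ineq:S_u_exactness_essential} is at most $C_FC_u\norm{\alpha^{k+1}-\opt\alpha}$. Combining with the contraction, $\norm{u^{k+2}-S_u(\alpha^{k+1})}\le C_F^{-1}C_FC_u\norm{\alpha^{k+1}-\opt\alpha}=C_u\norm{\alpha^{k+1}-\opt\alpha}$, which is \eqref{grad:u_u_alpha_closeness_formula} at $n=k+1$. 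The main obstacle I anticipate is the locality bookkeeping of the second paragraph---ensuring that $S_u(\alpha^{k+1})$ and the segment joining it to $u^{k+1}$ remain in the $r_u$-ball where the Hessian bounds are available, which is exactly why \cref{ass:FEFB:main}\,\ref{ass:FEFB:main:local} (resp.\ \cref{ass:FIFB:main}\,\ref{ass:FIFB:main:local}) is phrased as it is; a lesser point is extracting the sharp factor $1/C_F$ rather than a cruder contraction constant.
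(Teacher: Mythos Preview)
Your proof is correct and follows the same overall structure as the paper's: invoke \cref{lemma:alpha-option} to control $\norm{\alpha^{k+1}-\alpha^k}$ and obtain $\alpha^{k+1}\in B(\opt\alpha,2r_0)$; establish the locality $u^{k+1},S_u(\alpha^{k+1})\in B(\opt u,r_u)$; derive the contraction $\norm{u^{k+2}-S_u(\alpha^{k+1})}\le C_F^{-1}\norm{u^{k+1}-S_u(\alpha^{k+1})}$; and close via the triangle inequality, Lipschitz continuity of $S_u$, and \eqref{ineq:S_u_exactness_essential}. The only genuine difference is the derivation of the contraction. The paper applies \cref{thm:3point-monotonicity-strongly-convex} to $F(\freevar;\alpha^{k+1})$, inserts the update $-\tau^{-1}(u^{k+2}-u^{k+1})=\grad_u F(u^{k+1};\alpha^{k+1})$, and then uses the three-point identity \eqref{3-point-identity} to arrive at $(1+2\tau\gamma_F(1-\kappa))\norm{u^{k+2}-S_u(\alpha^{k+1})}^2\le\norm{u^{k+1}-S_u(\alpha^{k+1})}^2$. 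You instead write $u^{k+2}-S_u(\alpha^{k+1})=(\Id-\tau G)d$ via the integral mean-value theorem and bound $\norm{(\Id-\tau G)d}^2$ directly from the spectral estimates $\gamma_F\Id\le G\le L_F\Id$. Your route is a little more elementary and self-contained; the paper's route reuses the machinery already set up for \cref{lemma:inner-problem-monotonicity}, keeping the presentation uniform across the section. Both yield exactly the same factor $C_F^{-1}$.
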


\begin{proof}
    We plan to use \cref{thm:3point-monotonicity-strongly-convex} on $F(\freevar; \alpha^{k+1})$ followed by the three-point identity and simple manipulations.
    We begin by proving the conditions of the theorem.

    First, we show that both $u^{k+1}\in B(\opt{u},r_u)$ and $S_u(\alpha^{k+1})\in B(\opt{u},r_u)$.
    The former is immediate from \cref{Iterate_condition} and \cref{lemma:Iterate_condition_corollary}.
    For the latter we use \eqref{ineq:S_u_exactness_essential} of \cref{lemma:alpha-option}. Its first inequality readily implies either $\norm{\alpha^{k} - \opt{\alpha}} > \norm{\alpha^{k+1} - \alpha^{k}}$ or $\alpha^{k+1} = \opt\alpha$.
    In the latter case $S_u(\alpha^{k+1})=\opt u \in B(\opt{u},r_u)$.
    In the former, using $\alpha^k\in B(\opt{\alpha},r_0)$, we get
    \[
        \norm{\alpha^{k+1} - \opt{\alpha}} \leq \norm{\alpha^{k+1} - \alpha^{k}} + \norm{\alpha^{k}- \opt{\alpha}} < 2 \norm{\alpha^{k}- \opt{\alpha}} \leq 2r_0
    \]
    Therefore we can use the Lipschitz continuity of $S_u$ in $B(\opt{\alpha},2r)$ from \cref{ass:FEFB:main}\, \ref{ass:FEFB:main:solution-map-and-F} or \ref{ass:FIFB:main}\,\ref{ass:FIFB:main:solution-map-and-F} to estimate
    \[
        \norm{S_u(\alpha^{k+1}) - \opt{u}} = \norm{S_u(\alpha^{k+1}) - S_u(\opt{\alpha})} \leq  L_{S_u}\norm{\alpha^{k+1} - \opt{\alpha}} \leq L_{S_u}2r_0.
    \]
    This implies $S_u(\alpha^{k+1})\in B(\opt{u},r_u)$ by \cref{ass:FEFB:main}\,\ref{ass:FEFB:main:local} or \ref{ass:FIFB:main}\,\ref{ass:FIFB:main:local}.

    Since both $u^{k+1}, S_u(\alpha^{k+1}) \in B(\opt{u},r_u)$, \cref{ass:FEFB:main}\,\ref{ass:FEFB:main:neighbourhoods} or \ref{ass:FIFB:main}\,\ref{ass:FIFB:main:neighbourhoods}
    shows that $\gamma_F \cdot \Id \le \grad^2 F(u) \le L_F\cdot \Id$ for $u \in [S_u(\alpha^{k+1}), u^{k+1}]$.
    Consequently \cref{thm:3point-monotonicity-strongly-convex} and $\grad_{u}F(S_u(\alpha^{k+1}); \alpha^{k+1}) = 0$ give
    \[
        \iprod{\grad_{u}F(u^{k+1}; \alpha^{k+1})}{u^{k+2}-S_u(\alpha^{k+1})}
        \geq \gamma_F(1 - \kappa) \norm{u^{k+2}-S_u(\alpha^{k+1})}^2 - \frac{L_F}{4\kappa}\norm{u^{k+2}-u^{k+1}}^2.
    \]
    Inserting the $u$ update of \cref{alg:FEFB} or \ref{alg:FIFB}, i.e.,
    $
        -\tau^{-1}(u^{k+2}-u^{k+1}) = \grad_{u}F(u^{k+1}; \alpha^{k+1})
    $
    and using the three-point identity \eqref{3-point-identity} we get
    \begin{multline*}
        \frac{1}{2\tau}
        \left(
            \norm{u^{k+2}-S_u(\alpha^{k+1})}^2  +
            \norm{u^{k+2}-u^{k+1}}^2 -
            \norm{u^{k+1}-S_u(\alpha^{k+1})}^2
        \right)
        \\
        \le
        - \gamma_F(1 - \kappa) \norm{u^{k+2}-S_u(\alpha^{k+1})}^2 + \frac{L_F}{4\kappa}\norm{u^{k+2}-u^{k+1}}^2.
    \end{multline*}
    Equivalently
    \[
        \left(1+2\tau\gamma_F(1 - \kappa)\right)\norm{u^{k+2}-S_u(\alpha^{k+1})}^2 + \Bigl(1-\frac{\tau L_F}{2\kappa}
        \Bigr)
        \norm{u^{k+2}-u^{k+1}}^2
        \leq \norm{u^{k+1}-S_u(\alpha^{k+1})}^2.
    \]
    Because \cref{ass:FEFB:main}\,\ref{ass:FEFB:main:inner-step-length} or \ref{ass:FIFB:main}\,\ref{ass:FIFB:main:inner-and-adjoint-step-length}  guarantees $1-\tau L_F/(2\kappa) > 0,$ this implies
    \[
        \norm{u^{k+2}-S_u(\alpha^{k+1})} \leq C_F^{-1} \norm{u^{k+1}-S_u(\alpha^{k+1})}.
    \]
    Therefore the triangle inequality, \eqref{grad:u_u_alpha_closeness_formula} for $n=k$ and the Lipschitz continuity of $S_u$ in $B(\opt{\alpha},2r)\ni \alpha^{k}, \alpha^{k+1}$ yield
    \[
        \begin{aligned}[t]
            \norm{u^{k+2}-S_u(\alpha^{k+1})}
            &
            \le
            C_F^{-1} \norm{u^{k+1}-S_u(\alpha^{k+1})} \leq C_F^{-1} \bigl(
                \norm{u^{k+1}-S_u(\alpha^{k})} + L_{S_u}\norm{\alpha^{k+1}-\alpha^{k}}
            \bigr)
            \\
            &
            \leq C_F^{-1} \bigl(
                C_u \norm{\alpha^{k} - \opt{\alpha}} + L_{S_u}\norm{\alpha^{k+1}-\alpha^{k}}
            \bigr).
        \end{aligned}
    \]
    Inserting \eqref{ineq:S_u_exactness_essential} here, we establish the claim.
\end{proof}

The next lemma is a crucial monotonicity-type estimate for the outer problem.
It depends on an $\alpha$-relative exactness condition on the inner and adjoint variables.

\begin{lemma}\label{lemma:outer-problem-monotonicity}
    Let $n \in \N$.
    Suppose \cref{ass:FEFB:main}\,\cref{ass:FEFB:main:outer-objective,ass:FEFB:main:testing-params}, or \ref{ass:FIFB:main}\,\cref{ass:FIFB:main:outer-objective,ass:FIFB:main:testing-params} hold with \cref{Iterate_condition} and
    \begin{equation}\label{grad:up_up_alpha_closeness_formula_0}
        \norm{u^{n+1}-S_u(\alpha^n)} \leq C_u \norm{\alpha^n - \opt{\alpha}} \,  \text{ and } \,  \onorm{p^{n+1}-\grad_{\alpha}S_u(\alpha^n)} \leq C \norm{\alpha^n - \opt{\alpha}}.
    \end{equation}
    Then, for any $d > 0$,
    \begin{multline}
        \label{alpha_row_general}
        \iprod{p^{n+1}\grad_u J(u^{n+1}) + \subdiff R(\alpha^{n+1})}{\alpha^{n+1}-\opt \alpha} \geq - \frac{L_\alpha}{2} \norm{\alpha^{n+1}- \alpha^n}^2
        \\
        + \left(\frac{\gamma_\alpha}{2}-\frac{L_{\grad J}N_p C_u + C N_{\grad J}}{2d}\right)\norm{\alpha^{n+1}-\opt \alpha}^2
        + \left(\frac{\gamma_\alpha}{2}-\frac{(L_{\grad J}N_p C_u + C N_{\grad J})d}{2}\right)\norm{\alpha^n-\opt \alpha}^2.
    \end{multline}
\end{lemma}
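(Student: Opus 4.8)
The plan is to peel off the nonsmooth part by monotonicity of $\subdiff R$ and the optimality condition $0\in H(\opt u,\opt p,\opt\alpha)$, reducing to a monotonicity estimate for the exact outer gradient $\grad_\alpha(J\circ S_u)$ plus a controllable perturbation. First I would record what $0\in H(\opt u,\opt p,\opt\alpha)$ gives from \eqref{eq:H-def}: the first row yields $\grad_u F(\opt u;\opt\alpha)=0$, hence $\opt u=S_u(\opt\alpha)$; the second row yields $\opt p=S_p(\opt u,\opt\alpha)=\grad_\alpha S_u(\opt\alpha)$ by \eqref{def:S_p}; hence $\opt p\grad_u J(\opt u)=\grad_\alpha S_u(\opt\alpha)\grad_u J(S_u(\opt\alpha))=\grad_\alpha(J\circ S_u)(\opt\alpha)=:\grad g(\opt\alpha)$, and the third row says $-\grad g(\opt\alpha)\in\subdiff R(\opt\alpha)$. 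For an arbitrary $\xi^{n+1}\in\subdiff R(\alpha^{n+1})$, monotonicity of $\subdiff R$ then gives $\iprod{\xi^{n+1}+\grad g(\opt\alpha)}{\alpha^{n+1}-\opt\alpha}\ge 0$, so it suffices to lower-bound $\iprod{p^{n+1}\grad_u J(u^{n+1})-\grad g(\opt\alpha)}{\alpha^{n+1}-\opt\alpha}$, which I split as $\iprod{\grad g(\alpha^n)-\grad g(\opt\alpha)}{\alpha^{n+1}-\opt\alpha}+\iprod{p^{n+1}\grad_u J(u^{n+1})-\grad g(\alpha^n)}{\alpha^{n+1}-\opt\alpha}$.

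For the perturbation term I would reuse verbatim the estimate of $Q$ from the proof of \cref{lemma:alpha-option}: inserting $\pm p^{n+1}\grad_u J(S_u(\alpha^n))$, then using that $\grad_u J$ is $L_{\grad J}$-Lipschitz, that $\onorm{p^{n+1}}\le N_p$ by \cref{Iterate_condition}, that $\norm{\grad_u J(S_u(\alpha^n))}\le N_{\grad J}$ (its defining bound, valid since $\alpha^n\in B(\opt\alpha,r)$ by \cref{Iterate_condition}), and the two inequalities \eqref{grad:up_up_alpha_closeness_formula_0}, I get $\norm{p^{n+1}\grad_u J(u^{n+1})-\grad g(\alpha^n)}\le(L_{\grad J}N_pC_u+CN_{\grad J})\norm{\alpha^n-\opt\alpha}$. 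Cauchy--Schwarz and then Young's inequality with weight $d$ turn this into the two error terms $-\frac{L_{\grad J}N_pC_u+CN_{\grad J}}{2d}\norm{\alpha^{n+1}-\opt\alpha}^2$ and $-\frac{(L_{\grad J}N_pC_u+CN_{\grad J})d}{2}\norm{\alpha^n-\opt\alpha}^2$ that appear in \eqref{alpha_row_general}.

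For the exact term I would use the fundamental theorem of calculus to write $\grad g(\alpha^n)-\grad g(\opt\alpha)=\mathcal H(\alpha^n-\opt\alpha)$ with $\mathcal H\defeq\int_0^1\grad_\alpha^2(J\circ S_u)(\opt\alpha+s(\alpha^n-\opt\alpha))\ddd s$; since $[\opt\alpha,\alpha^n]\subset B(\opt\alpha,r_0)\isect\Dom R\subseteq B(\opt\alpha,r)\isect\Dom R$ (by \cref{Iterate_condition}, $r_0\le r$, and convexity of $\Dom R$), the second-order bound in \cref{ass:FEFB:main} or \ref{ass:FIFB:main} gives $\gamma_\alpha\cdot\Id\le\mathcal H\le L_\alpha\cdot\Id$. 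Applying the three-point identity \eqref{3-point-identity} with the $\mathcal H$-inner product to $\iprod{\alpha^{n+1}-\opt\alpha}{\alpha^n-\opt\alpha}_{\mathcal H}$, and noting $(\alpha^{n+1}-\opt\alpha)-(\alpha^n-\opt\alpha)=\alpha^{n+1}-\alpha^n$, yields $\iprod{\grad g(\alpha^n)-\grad g(\opt\alpha)}{\alpha^{n+1}-\opt\alpha}=\frac12\norm{\alpha^{n+1}-\opt\alpha}_{\mathcal H}^2+\frac12\norm{\alpha^n-\opt\alpha}_{\mathcal H}^2-\frac12\norm{\alpha^{n+1}-\alpha^n}_{\mathcal H}^2\ge\frac{\gamma_\alpha}{2}\norm{\alpha^{n+1}-\opt\alpha}^2+\frac{\gamma_\alpha}{2}\norm{\alpha^n-\opt\alpha}^2-\frac{L_\alpha}{2}\norm{\alpha^{n+1}-\alpha^n}^2$. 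Adding this to the perturbation bound gives \eqref{alpha_row_general}. The step needing care is exactly this exact term: invoking \cref{thm:3point-monotonicity-strongly-convex} with $\beta=\frac12$ only delivers the $\norm{\alpha^{n+1}-\opt\alpha}^2$ and $\norm{\alpha^{n+1}-\alpha^n}^2$ contributions, so one must instead use the sharp polarization form above (equivalently \eqref{3-point-identity} for the integrated-Hessian inner product) in order to also extract the $\frac{\gamma_\alpha}{2}\norm{\alpha^n-\opt\alpha}^2$ summand; everything else is routine.
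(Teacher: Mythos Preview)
Your proposal is correct and follows essentially the same route as the paper: peel off $\partial R$ via monotonicity and the optimality condition, split the remainder into the exact outer-gradient term and the perturbation, bound the perturbation exactly as in \cref{lemma:alpha-option} and then Young with weight $d$, and for the exact term apply the three-point (polarization) identity in a Hessian-weighted inner product. The only cosmetic difference is that the paper obtains the weighting operator via the one-dimensional mean value theorem (a single $Q=\grad_\alpha^2(J\circ S_u)(\zeta)$) whereas you use the integrated Hessian $\mathcal H=\int_0^1\grad_\alpha^2(J\circ S_u)(\opt\alpha+s(\alpha^n-\opt\alpha))\,ds$; both inherit the bounds $\gamma_\alpha\cdot\Id\le\cdot\le L_\alpha\cdot\Id$ on $[\opt\alpha,\alpha^n]$ and yield the same inequality.
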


\begin{proof}
    The $\alpha$-update of both \cref{alg:FEFB,alg:FIFB} in implicit form reads
    \[
        0 = \sigma(q^{n+1} + p^{n+1}\grad_u J(u^{n+1}))  + \alpha^{n+1} - \alpha^n
        \quad\text{for some}\quad
        q^{n+1} \in \subdiff R(\alpha^{n+1}).
    \]
    Similarly, $0 \in H(\opt u, \opt p, \opt \alpha)$ implies
    $
        \opt p\,\grad_{u}J(\opt u) +  \opt q =0
    $
    for some
    $
        \opt q \in \subdiff R(\opt \alpha).
    $
    Writing $E_0$ for the left hand side of \eqref{alpha_row_general},
    these expressions and the monotonicity of $\partial R$ yield
    \begin{equation}\label{grad_alpha_iprod}
        \begin{aligned}[t]
            E_0
            &
            =
            \iprod{p^{n+1}\grad_u J(u^{n+1})- \opt p\,\grad_{u}J(\opt u) + q^{n+1} - \opt q}{\alpha^{n+1}-\opt \alpha}
            \\
            &
            \ge
            \iprod{p^{n+1}\grad_u J(u^{n+1})-\grad_{\alpha}S_u(\alpha^n)\grad_u J(S_u(\alpha^n))}{\alpha^{n+1}-\opt \alpha}
            \\
            \MoveEqLeft[-1]
            + \iprod{\grad_{\alpha}S_u(\alpha^n)\grad_u J(S_u(\alpha^n))-\opt p\,\grad_{u}J(\opt u)}{\alpha^{n+1}-\opt \alpha}
            =: E_1 + E_2.
        \end{aligned}
    \end{equation}

    We estimate $E_1$ and $E_2$ separately.
    The one-dimensional mean value theorem gives
    \[
            E_2
            =
            \iprod{\grad_\alpha (J\circ S_u)(\alpha^n)-\grad_\alpha (J\circ S_u)(\opt \alpha)}{\alpha^{n+1}-\opt \alpha}
            =
            \iprod{Q(\alpha^n-\opt \alpha)}{\alpha^{n+1}-\opt \alpha}
    \]
    for some $\zeta \in [\opt \alpha, \alpha^n]$ and $Q \defeq \grad^2_\alpha (J\circ S_u)(\zeta)$.
    Since $\norm{\alpha^n - \opt{\alpha}} \le r$ by \cref{Iterate_condition}, also $\norm{\zeta - \opt \alpha} \le r$.
    Therefore, the 3-point identity \eqref{3-point-identity} and \cref{ass:FEFB:main}\,\ref{ass:FEFB:main:outer-objective} or \ref{ass:FIFB:main}\,\ref{ass:FIFB:main:outer-objective}  yield
    \begin{equation}
        \label{3-point_Hessian_2}
        \begin{aligned}[t]
        E_2
        &
        =
        \frac{1}{2}\norm{\alpha^{n+1}-\opt \alpha}^2_{Q}
        + \frac{1}{2}\norm{\alpha^n-\opt \alpha}^2_{Q}
        - \frac{1}{2}\norm{\alpha^{n+1}- \alpha^n}^2_{Q}
        \\
        &
        \ge
        \frac{\gamma_\alpha}{2}(\norm{\alpha^{n+1}-\opt \alpha}^2 + \norm{\alpha^n-\opt \alpha}^2) - \frac{L_\alpha}{2}\norm{\alpha^{n+1}- \alpha^n}^2.
        \end{aligned}
    \end{equation}

    To estimate $E_1$ we rearrange
    \[
        \begin{aligned}[t]
            E_1
            &
            =
            \iprod{p^{n+1}\grad_u J(u^{n+1})-\grad_{\alpha}S_u(\alpha^n)\grad_u J(S_u(\alpha^n))}{\alpha^{n+1}-\opt \alpha}
            \\
            &
            =
            \iprod{p^{n+1}(\grad_u J(u^{n+1})-\grad_u J(S_u(\alpha^n))) + (p^{n+1}-\grad_{\alpha}S_u(\alpha^n))\grad_u J(S_u(\alpha^n))}{\alpha^{n+1}-\opt{\alpha}}.
        \end{aligned}
    \]
    We have $\norm{\grad_u J(S_u(\alpha^n))} \leq N_{\grad J}$ by the definition of the latter in \cref{ass:FEFB:main}\,\cref{ass:FIFB:main:testing-params} or \ref{ass:FIFB:main}\,\cref{ass:FIFB:main:testing-params} with $\alpha^n\in B(\opt{\alpha}, r)$ from \cref{Iterate_condition}. The same assumptions establish that $\grad_u J$ is Lipschitz. Hence, using the operator norm inequality \cref{thm:separable:properties}\,\ref{item:separable:operator-norm},
    \[
        \begin{aligned}[t]
            E_1
            &
            \geq
            - \onorm{p^{n+1}}\norm{\grad_u J(u^{n+1})-\grad_u J(S_u(\alpha^n))}\norm{\alpha^{n+1}-\opt \alpha}
            \\
            \MoveEqLeft[-1]
            - \onorm{p^{n+1}-\grad_{\alpha}S_u(\alpha^n)}\norm{\grad_u J(S_u(\alpha^n))} \norm{\alpha^{n+1}-\opt \alpha}
            \\
            &
            \geq
            -\left(L_{\grad J}\onorm{p^{n+1}}\norm{u^{n+1}-S_u(\alpha^n)} + C\norm{\grad_u J(S_u(\alpha^n))}\norm{\alpha^n-\opt \alpha}\right) \norm{\alpha^{n+1}-\opt \alpha}.
        \end{aligned}
    \]
    Applying \eqref{grad:up_up_alpha_closeness_formula_0} and Young's inequality now yields for any $d>0$ the estimate
    \begin{equation}
        \label{grad_J_estimate_u}
        \begin{aligned}[t]
            E_1
            &
            \geq
            -\left(L_{\grad J}N_p C_u + C N_{\grad J}\right) \norm{\alpha^n-\opt \alpha}\norm{\alpha^{n+1}-\opt \alpha}
            \\
            &
            \geq
            -\left(L_{\grad J}N_p C_u + C N_{\grad J}\right) \left(\frac{d}{2}\norm{\alpha^n-\opt \alpha}^2 + \frac{1}{2d}\norm{\alpha^{n+1}-\opt \alpha}^2\right).
        \end{aligned}
    \end{equation}
    By inserting \eqref{3-point_Hessian_2} and \eqref{grad_J_estimate_u} into \eqref{grad_alpha_iprod} we obtain the claim \eqref{alpha_row_general}.
\end{proof}

\subsection{Convergence: forward--exact--forward-backward}
\label{subsec:convergence:FEFB}

We now prove the convergence of \cref{alg:FEFB}. We start with a lemma that shows an $\alpha$-relative exactness estimate on the adjoint iterate when one holds for the inner iterate. This is needed to use \cref{lemma:outer-problem-monotonicity}.
The main result of this subsection is in the final \cref{thr:grad-exact-grad_convergence}.
It proves under \cref{ass:FEFB:main} the linear convergence of $\{(u^n, \alpha^n)\}_{n \in \N}$ generated by \cref{alg:FEFB} to $(\opt u, \opt \alpha)$ solving the first-order optimality condition \eqref{eq:main_optimality_condition} for some $\opt p$.

\begin{lemma}\label{lemma:in-eq:p_diff}
    Let $n \in \N$.
    Suppose \cref{ass:FEFB:main} and the inner exactness estimate \eqref{grad:u_u_alpha_closeness_formula} hold as well as $\alpha^{n}\in B(\opt{\alpha}, r_0)$ and $u^{n}\in B(\opt{u}, \sqrt{\sigma^{-1}\inv \phi_u\tau}r_0)$.
    Then \eqref{grad:adjoint_closeness_formula} and \eqref{grad:up_up_alpha_closeness_formula_0} hold for $C=L_{S_p}C_u.$
\end{lemma}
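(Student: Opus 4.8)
The plan is to recognize that in \cref{alg:FEFB} the adjoint iterate is \emph{exactly} the value of the solution mapping $S_p$ from \eqref{def:S_p}, namely $p^{n+1} = S_p(u^{n+1}, \alpha^n)$, while the adjoint equation \eqref{eq:p-row-oc} identifies $\grad_\alpha S_u(\alpha^n) = S_p(S_u(\alpha^n), \alpha^n)$. The discrepancy $p^{n+1}-\grad_\alpha S_u(\alpha^n)$ is then a difference of values of $S_p$ with the \emph{same} second argument $\alpha^n$, so only the $u$-discrepancy enters, and it is controlled by the Lipschitz continuity of $S_p$ from \cref{lemma:S_p-Lipschitz} together with the hypothesised inner exactness estimate \eqref{grad:u_u_alpha_closeness_formula}.

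First I would fix the sets on which \cref{lemma:S_p-Lipschitz} applies. Since $r_0 \le r$ by \cref{ass:FEFB:main}\,\ref{ass:FEFB:main:local}, the hypothesis $\alpha^{n}\in B(\opt{\alpha}, r_0)$ gives $\alpha^n \in B(\opt\alpha, r)\isect\Dom R$; combined with $u^{n}\in B(\opt{u}, \sqrt{\sigma^{-1}\inv\phi_u\tau}\,r_0)$, \cref{lemma:Iterate_condition_corollary} yields $u^{n+1}\in B(\opt u, r_u)$ and $S_u(\alpha^n)\in B(\opt u, r_u)$. On $V_u\times V_\alpha\defeq B(\opt u, r_u)\times(B(\opt\alpha, r)\isect\Dom R)$ the Lipschitz hypotheses and lower bound $\gamma_F\cdot\Id\le\grad_u^2 F$ of \cref{lemma:S_p-Lipschitz} hold by \cref{ass:FEFB:main}\,\ref{ass:FEFB:main:neighbourhoods} (with the bound $\onorm{\grad_{\alpha u}F}\le N_{\grad_{\alpha u}F}$ from \cref{ass:FEFB:main}\,\ref{ass:FEFB:main:testing-params}); in particular $\grad_u^2 F(S_u(\alpha^n), \alpha^n)$ is invertible, so by the derivation of \eqref{eq:p-row-oc} and \eqref{def:S_p} the identity $\grad_\alpha S_u(\alpha^n) = S_p(S_u(\alpha^n), \alpha^n)$ is valid, using the differentiability of $S_u$ at $\alpha^n$ from \cref{ass:FEFB:main}\,\ref{ass:FEFB:main:solution-map-and-F}.

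Then I would write $p^{n+1}-\grad_\alpha S_u(\alpha^n) = S_p(u^{n+1}, \alpha^n) - S_p(S_u(\alpha^n), \alpha^n)$ and apply \cref{lemma:S_p-Lipschitz}, whose two arguments differ only in the first slot, obtaining $\onorm{p^{n+1}-\grad_\alpha S_u(\alpha^n)}\le L_{S_p}\norm{u^{n+1}-S_u(\alpha^n)}$. The inner exactness estimate \eqref{grad:u_u_alpha_closeness_formula} bounds the right-hand side by $L_{S_p}C_u\norm{\alpha^n-\opt\alpha}$, which is precisely \eqref{grad:adjoint_closeness_formula} with $C=L_{S_p}C_u$; together with the hypothesised \eqref{grad:u_u_alpha_closeness_formula} this is exactly \eqref{grad:up_up_alpha_closeness_formula_0} for that same $C$. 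No genuine obstacle is expected: the only care needed is the bookkeeping of which ball each of $u^{n+1}$, $S_u(\alpha^n)$ and $\alpha^n$ lies in so that \cref{lemma:S_p-Lipschitz} is legitimately applicable — which is exactly what \cref{lemma:Iterate_condition_corollary} supplies — plus the elementary observation that the unchanged second argument of $S_p$ kills the $\alpha$-term in the Lipschitz bound.
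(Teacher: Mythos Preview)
Your proposal is correct and follows essentially the same approach as the paper: both identify $p^{n+1}=S_p(u^{n+1},\alpha^n)$ and $\grad_\alpha S_u(\alpha^n)=S_p(S_u(\alpha^n),\alpha^n)$, invoke \cref{lemma:Iterate_condition_corollary} to place $u^{n+1}$ and $S_u(\alpha^n)$ in $B(\opt u,r_u)$, and then apply the Lipschitz bound from \cref{lemma:S_p-Lipschitz} together with \eqref{grad:u_u_alpha_closeness_formula}. Your write-up is slightly more explicit about why the hypotheses of \cref{lemma:S_p-Lipschitz} are met, but the argument is the same.
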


\begin{proof}
    Since \eqref{grad:adjoint_closeness_formula} with  \eqref{grad:u_u_alpha_closeness_formula} equals \eqref{grad:up_up_alpha_closeness_formula_0}, it suffices to prove  \eqref{grad:adjoint_closeness_formula}.
    We assumed $\alpha^n\in B(\opt{\alpha}, r_0)$  and $u^{n+1},S_u(\alpha^n)\in B(\opt{u}, r_u)$ by \cref{lemma:Iterate_condition_corollary}.
    Therefore the Lipschitz continuity of $S_p$ in $B(\opt{u}, r_u)\times B(\opt{\alpha}, r)$ from \cref{lemma:S_p-Lipschitz} with \cref{ass:FEFB:main}\,\cref{ass:FEFB:main:neighbourhoods,ass:FEFB:main:solution-map-and-F} and \eqref{grad:u_u_alpha_closeness_formula} give
    \[
        \onorm{p^{n+1} - \grad_{\alpha}S_u(\alpha^n)}
        =
        \onorm{S_p(u^{n+1},\alpha^n)-S_p(S_u(\alpha^{n}),\alpha^{n})}
        \leq
        L_{S_p}\norm{u^{n+1}-S_u(\alpha^{n})}
        \leq
        L_{S_p} C_u \norm{\alpha^{n} - \opt{\alpha}}.
        \qedhere
    \]
\end{proof}

We are able to collect the previous lemmas into a descent estimate from which we immediately observe local linear convergence.
We recall the definitions of the preconditioning and testing operators $M$ and $Z$ in \eqref{eq:mkplus1} and \eqref{eq:testing-operator}.

\begin{lemma}
    \label{Fejer_lemma_implication}
    Let $n \in \N$ and suppose \cref{ass:FEFB:main,Iterate_condition}, and the inner exactness estimate \eqref{grad:u_u_alpha_closeness_formula} hold.
    Then
    \begin{equation}
        \label{Fejer_monot_u}
        \norm{x^{n+1}-\opt x}_{ZM}^2
        +2\varepsilon_u\norm{u^{n+1}-\opt u}^2 + 2\epsilon_{\alpha}\norm{\alpha^{n+1} - \opt \alpha}^2
        \le
        \norm{x^n-\opt x}_{ZM}^2
    \end{equation}
    for $\phi_u>0$ as in \cref{ass:FEFB:main}\,\cref{ass:FEFB:main:testing-params},
    \[
        \epsilon_u \defeq \frac{\phi_u \gamma_F(1-\kappa)}{2} > 0,
        \quad\text{and}\quad
        \epsilon_{\alpha} \defeq \frac{\gamma_\alpha - (L_{\grad J}N_p + L_{S_p} N_{\grad J})C_u}{2} > 0.
    \]
\end{lemma}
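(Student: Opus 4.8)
The plan is to follow the standard "testing" template used throughout the paper: rewrite the FEFB iteration in its implicit form \eqref{eq:FEFB:implicit}, test the inclusion against $Z(x^{n+1}-\opt x)$, apply the three-point identity \eqref{3-point-identity} in the $ZM$-inner product, and then bound the arising linear term from below using the two monotonicity estimates \cref{lemma:inner-problem-monotonicity,lemma:outer-problem-monotonicity}. Concretely, from \eqref{eq:FEFB:implicit}, \eqref{eq:H_k+1-def}, and the exact adjoint solve in \cref{alg:FEFB} I would first extract $w^{n+1}\defeq -M(x^{n+1}-x^n)\in H_{n+1}(x^{n+1})$ with
\[
    w^{n+1}=\bigl(\grad_u F(u^n;\alpha^n),\ 0,\ p^{n+1}\grad_u J(u^{n+1})+q^{n+1}\bigr)
    \quad\text{for some }q^{n+1}\in\subdiff R(\alpha^{n+1});
\]
the middle (adjoint) block vanishes precisely because $p^{n+1}$ solves the adjoint equation exactly. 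Since $ZM=\diag(\phi_u\tau^{-1}\Id,0,\sigma^{-1}\Id)$ is self-adjoint and positive semi-definite, taking the inner product with $Z(x^{n+1}-\opt x)$ and using \eqref{3-point-identity} with $x=x^{n+1}$, $y=x^n$, $z=\opt x$ gives the exact identity
\[
    \norm{x^{n+1}-\opt x}_{ZM}^2 + \norm{x^{n+1}-x^n}_{ZM}^2 + 2\iprod{w^{n+1}}{Z(x^{n+1}-\opt x)} = \norm{x^n-\opt x}_{ZM}^2.
\]

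Next I would expand the left-hand side blockwise, noting that the $p$-block contributes nothing (zero in both $M$ and $w^{n+1}$), so the special inner product $\oiprod{\freevar}{\freevar}$ on $P$ plays no role here: $\norm{x^{n+1}-x^n}_{ZM}^2=\phi_u\tau^{-1}\norm{u^{n+1}-u^n}^2+\sigma^{-1}\norm{\alpha^{n+1}-\alpha^n}^2$, and $\iprod{w^{n+1}}{Z(x^{n+1}-\opt x)}=\phi_u\iprod{\grad_u F(u^n;\alpha^n)}{u^{n+1}-\opt u}+\iprod{p^{n+1}\grad_u J(u^{n+1})+q^{n+1}}{\alpha^{n+1}-\opt\alpha}$. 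To the first inner product I apply \cref{lemma:inner-problem-monotonicity} (valid under \cref{ass:FEFB:main} and \cref{Iterate_condition}, both hypotheses here). To the second I apply \cref{lemma:outer-problem-monotonicity} with $d=1$; its hypothesis \eqref{grad:up_up_alpha_closeness_formula_0} is obtained from \cref{lemma:in-eq:p_diff}, whose own hypotheses $\alpha^n\in B(\opt\alpha,r_0)$ and $u^n\in B(\opt u,\sqrt{\sigma^{-1}\inv\phi_u\tau}r_0)$ are part of \cref{Iterate_condition} and whose remaining hypothesis is the assumed \eqref{grad:u_u_alpha_closeness_formula}, yielding the adjoint closeness with $C=L_{S_p}C_u$.

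Substituting these two lower bounds, I then collect coefficients of $\norm{u^{n+1}-\opt u}^2$, $\norm{u^{n+1}-u^n}^2$, $\norm{\alpha^{n+1}-\alpha^n}^2$, $\norm{\alpha^{n+1}-\opt\alpha}^2$, and $\norm{\alpha^n-\opt\alpha}^2$ and check they are the right ones: $\norm{u^{n+1}-\opt u}^2$ gets coefficient $\phi_u\gamma_F(1-\kappa)=2\varepsilon_u$; $\norm{u^{n+1}-u^n}^2$ gets $\phi_u(\tau^{-1}-L_F/(2\kappa))\ge 0$ by \cref{ass:FEFB:main}\,\cref{ass:FEFB:main:inner-step-length}; $\norm{\alpha^{n+1}-\alpha^n}^2$ gets $\sigma^{-1}-L_\alpha\ge 0$ by \cref{lemma:sigma_bound_corollary}; with $d=1$ and $C=L_{S_p}C_u$, $\norm{\alpha^{n+1}-\opt\alpha}^2$ gets exactly $\gamma_\alpha-(L_{\grad J}N_p+L_{S_p}N_{\grad J})C_u=2\epsilon_\alpha$; and the "backward" term $\norm{\alpha^n-\opt\alpha}^2$ gets $2\epsilon_\alpha-\phi_u L_{\grad F,\opt u}^2/(\gamma_F(1-\kappa))=\gamma_\alpha-(L_{\grad J}N_p+L_{S_p}N_{\grad J})C_u-\phi_u L_{\grad F,\opt u}^2/(\gamma_F(1-\kappa))$, which is strictly positive upon dividing the inequality in \cref{ass:FEFB:main}\,\cref{ass:FEFB:main:testing-params} by $\gamma_F$; the same division also gives $\epsilon_\alpha>0$, while $\epsilon_u>0$ is immediate. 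Discarding the three nonnegative terms not appearing in the claim leaves \eqref{Fejer_monot_u}. The main obstacle is this final coefficient bookkeeping: the monotonicity lemmas must produce exactly $2\varepsilon_u$ and $2\epsilon_\alpha$ on the two "forward" terms while leaving nonnegative slack on the auxiliary terms, and the positivity of the $\norm{\alpha^n-\opt\alpha}^2$-coefficient is precisely what \cref{ass:FEFB:main}\,\cref{ass:FEFB:main:testing-params} is engineered to supply; the choice $d=1$ in \cref{lemma:outer-problem-monotonicity} is what makes both the forward and backward $\alpha$-coefficients land on their target values.
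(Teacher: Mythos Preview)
Your proposal is correct and follows essentially the same route as the paper: test the implicit FEFB inclusion against $Z(x^{n+1}-\opt x)$, invoke the three-point identity in the $ZM$-seminorm, kill the adjoint block via the exact $p$-solve, and bound the remaining $u$- and $\alpha$-blocks with \cref{lemma:inner-problem-monotonicity} and \cref{lemma:outer-problem-monotonicity} (the latter's hypothesis supplied by \cref{lemma:in-eq:p_diff}). Your choice $d=1$ is the right one---it is exactly what makes both the forward and backward $\alpha$-coefficients equal $\epsilon_\alpha$; the paper's text says $d=2$, which appears to be a typo since the displayed line immediately following it only holds for $d=1$.
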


\begin{proof}
    We start by proving the monotonicity estimate
    \begin{equation}
        \label{ineq:FEFB:monotonicity-prove}
        \iprod{ZH_{n+1}(x^{n+1})}{x^{n+1}-\opt{x}} \ge \Vfun_{n+1}(\opt{x}) -  \frac{1}{2}\|x^{n+1}-x^{n}\|^2_{ZM}
    \end{equation}
    for $\Vfun_{n+1}(\opt{u}, \opt{p}, \opt{\alpha}) \defeq \varepsilon_u\norm{u^{n+1}-\opt u}^2 + \epsilon_{\alpha}\norm{\alpha^{n+1} - \opt \alpha}^2$.
    We observe that $\epsilon_u,\epsilon_{\alpha}>0$ by \cref{ass:FEFB:main}.
    The monotonicity estimate \cref{ineq:FEFB:monotonicity-prove} expands as
    \begin{gather}
        \label{ineq:grad-ineq-to-prove_u}
        h_{n+1}
        \geq
        \Vfun_{n+1}(\opt{u}, \opt{p}, \opt{\alpha})
        -\frac{\phi_u}{2\tau}\norm{u^{n+1}-u^{n}}^2 -\frac{1}{2\sigma}\norm{\alpha^{n+1}-\alpha^{n}}^2
    \shortintertext{for (all elements of the set)}
        \nonumber
        h_{n+1} := \left \langle
        \begin{pmatrix}
            \phi_u\grad_{u}F(u^{n};\alpha^{n})  \\
            p^{n+1} \grad_{u}^2 F(u^{n+1};\alpha^{n})  + \grad_{\alpha u}F(u^{n+1};\alpha^{n}) \\
            p^{n+1}\grad_u J(u^{n+1}) + \subdiff R(\alpha^{n+1})
        \end{pmatrix}
        ,
        \begin{pmatrix}
            u^{n+1}-\opt{u} \\
            p^{n+1}-\opt{p}\\
            \alpha^{n+1}-\opt{\alpha}
        \end{pmatrix}
        \right \rangle .
    \end{gather}
    We estimate each of the three lines of $h_{n+1}$ separately.
    For the first line, we use \eqref{u_row_Young:general} from \cref{lemma:inner-problem-monotonicity}.
    For the middle line we observe that $p^{n+1}\grad_{u}^2 F(u^{n+1};\alpha^{n}) + \grad_{\alpha u}F( u^{n+1};\alpha^{n})=0$ by the $p$-update of \cref{alg:FEFB}.

    For the last line, we use \eqref{alpha_row_general} from \cref{lemma:outer-problem-monotonicity} with $d=2$.
    We can do this because \eqref{grad:up_up_alpha_closeness_formula_0} holds by \cref{grad:u_u_alpha_closeness_formula,lemma:in-eq:p_diff}. This gives
    \[
        \iprod{p^{n+1}\grad_u J(u^{n+1}) + \subdiff R(\alpha^{n+1})}{\alpha^{n+1}-\opt \alpha} \geq - \frac{L_\alpha}{2} \norm{\alpha^{n+1}- \alpha^{n}}^2
        + \epsilon_{\alpha}\norm{\alpha^{n+1}-\opt \alpha}^2
        + \epsilon_{\alpha}\norm{\alpha^{n}-\opt \alpha}^2.
    \]
    Summing with \cref{u_row_Young:general} we thus obtain
    \[
        \begin{aligned}[t]
        h_{n+1}
        &
        \geq
        \frac{\phi_u\gamma_F(1 - \kappa)}{2}\norm{u^{n+1}-\opt u}^2 - \frac{\phi_u L_F}{4\kappa}\norm{u^{n+1}-u^{n}}^2  - \frac{L_\alpha}{2} \norm{\alpha^{n+1}- \alpha^{n}}^2
        \\
        \MoveEqLeft[-1]
        + \epsilon_{\alpha} \norm{\alpha^{n+1}-\opt \alpha}^2
        + \left(\epsilon_{\alpha} - \frac{\phi_u L_{\grad F,\opt{u}}^2}{2 \gamma_F(1-\kappa)}\right)\norm{\alpha^{n}-\opt \alpha}^2.
        \end{aligned}
    \]
    The factor of the first term is $\epsilon_u$ and the factor of last term is zero.
    Since  $\sigma <1/L_\alpha$ by  \cref{lemma:sigma_bound_corollary}
    and $L_F/(2\kappa) \le 1/\tau$ by \cref{ass:FEFB:main}\,\ref{ass:FEFB:main:inner-step-length}, we obtain \eqref{ineq:grad-ineq-to-prove_u}, i.e., \eqref{ineq:FEFB:monotonicity-prove}.

    We now come to the fundamental argument of the testing approach of \cite{tuomov-proxtest}, combining operator-relative monotonicity estimates with the three-point identity.
    Indeed, \eqref{ineq:FEFB:monotonicity-prove} combined with the implicit algorithm \eqref{eq:FEFB:implicit} gives
    \[
        \iprod{ZM(x^{n+1} - x^n)}{x^{n+1} - \opt{x}}
        +
        \Vfun_{n+1}(\opt{x})
        \le
        \frac{1}{2}\norm{x^{n +1}-x^{n}}^2_{ZM}.
    \]
    Inserting the three-point identity \eqref{3-point-identity} and expanding $\Vfun_{n+1}$ yields \eqref{Fejer_monot_u}.
\end{proof}

Before stating our main convergence result for the FEFB, we simplify the assumptions of the previous lemma to just \cref{ass:FEFB:main}.

\begin{lemma}
    \label{Fejer_lemma}
    Suppose \cref{ass:FEFB:main} holds.
    Then \eqref{Fejer_monot_u} holds for any $n\in\N$.
\end{lemma}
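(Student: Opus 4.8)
The plan is to show that \cref{ass:FEFB:main} implies, by induction on $n$, that the hypotheses of \cref{Fejer_lemma_implication} hold for every $n\in\N$, so that its conclusion \eqref{Fejer_monot_u} follows directly. The three hypotheses of \cref{Fejer_lemma_implication} are: \cref{ass:FEFB:main} itself (given), \cref{Iterate_condition} for $n$, and the inner exactness estimate \eqref{grad:u_u_alpha_closeness_formula} for $n$. So the induction hypothesis I would carry is the conjunction ``\cref{Iterate_condition} holds for $n$ and \eqref{grad:u_u_alpha_closeness_formula} holds for $n$'', and along the way I also get \eqref{Fejer_monot_u} for each $n$, which in particular gives $\norm{x^{n+1}-\opt x}_{ZM}\le\norm{x^n-\opt x}_{ZM}$ — the monotonicity needed to feed \cref{lemma:assumptions}.

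\textbf{Base case $n=0$.} I need \cref{Iterate_condition} for $n=0$ and \eqref{grad:u_u_alpha_closeness_formula} for $n=0$. The latter is exactly \cref{ass:FEFB:main}\,\cref{ass:FEFB:main:init}, the relative initialization bound $\norm{u^1-S_u(\alpha^0)}\le C_u\norm{\alpha^0-\opt\alpha}$. For \cref{Iterate_condition} at $n=0$ I must check $\alpha^0\in B(\opt\alpha,r_0)$, $u^0\in B(\opt u,\sqrt{\sigma^{-1}\inv\phi_u\tau}\,r_0)$, and $\onorm{p^1}\le N_p$. The first two are immediate from the definition of $r_0$ in \cref{ass:FEFB:main}\,\cref{ass:FEFB:main:local}: indeed $r_0=\sqrt{\sigma}\norm{x^0-\opt x}_{ZM}$ with $ZM=\diag(\phi_u\tau^{-1}\Id,0,\sigma^{-1}\Id)$ expands to exactly $\sqrt{\sigma\phi_u\tau^{-1}\norm{u^0-\opt u}^2+\norm{\alpha^0-\opt\alpha}^2}$, so $\norm{\alpha^0-\opt\alpha}\le r_0$ and $\sqrt{\sigma\phi_u\tau^{-1}}\norm{u^0-\opt u}\le r_0$, i.e. $\norm{u^0-\opt u}\le\sqrt{\sigma^{-1}\inv\phi_u\tau}\,r_0$. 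For $\onorm{p^1}\le N_p$ I invoke \cref{lemma:p-np} with $n=0$: its hypothesis \eqref{grad:adjoint_closeness_formula} at $n=0$ with $C=L_{S_p}C_u$ is furnished by \cref{lemma:in-eq:p_diff} (whose hypotheses — \eqref{grad:u_u_alpha_closeness_formula} at $n=0$, $\alpha^0\in B(\opt\alpha,r_0)$, $u^0\in B(\opt u,\sqrt{\sigma^{-1}\inv\phi_u\tau}\,r_0)$ — we have just verified), and $\alpha^0\in B(\opt\alpha,r)$ holds since $r_0\le r$. This closes the base case, and \cref{Fejer_lemma_implication} then yields \eqref{Fejer_monot_u} for $n=0$.

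\textbf{Inductive step.} Assume \cref{Iterate_condition} and \eqref{grad:u_u_alpha_closeness_formula} hold for all $n\le k$, and \eqref{Fejer_monot_u} for all $n\le k-1$ (hence $\norm{x^{n+1}-\opt x}_{ZM}\le\norm{x^n-\opt x}_{ZM}$ for $n\le k-1$; the $n=k$ instance comes from applying \cref{Fejer_lemma_implication} at $n=k$, whose hypotheses are the $n=k$ parts of the induction hypothesis). I must produce \cref{Iterate_condition} and \eqref{grad:u_u_alpha_closeness_formula} for $n=k+1$. First, \cref{lemma:in-eq:p_diff} at $n=k$ gives \eqref{grad:adjoint_closeness_formula} for $n=k$ with $C=L_{S_p}C_u$. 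Then \cref{lemma:grad:u_u_alpha_closeness_formula} at $n=k$ — whose hypotheses \cref{Iterate_condition}, \eqref{grad:adjoint_closeness_formula}, \eqref{grad:u_u_alpha_closeness_formula} at $n=k$ are all in hand — delivers \eqref{grad:u_u_alpha_closeness_formula} for $n=k+1$, which is one of the two things I want. For the other, I need \eqref{grad:adjoint_closeness_formula} at $n=k+1$ (with $C=L_{S_p}C_u$); this again follows from \cref{lemma:in-eq:p_diff} applied at $n=k+1$, once I know $\alpha^{k+1}\in B(\opt\alpha,r_0)$ and $u^{k+1}\in B(\opt u,\sqrt{\sigma^{-1}\inv\phi_u\tau}\,r_0)$ — but those two facts, together with $\onorm{p^{k+2}}\le N_p$, are precisely \cref{Iterate_condition} for $n=k+1$, which \cref{lemma:assumptions} supplies: its hypotheses are \cref{ass:FEFB:main}, \cref{Iterate_condition} at $n=k$, \eqref{grad:adjoint_closeness_formula} at $n=k+1$, and $\norm{x^{n+1}-\opt x}_{ZM}\le\norm{x^n-\opt x}_{ZM}$ for $n\in\{0,\dots,k\}$, all of which we now have. (There is a mild circularity to untangle: \cref{lemma:assumptions} needs \eqref{grad:adjoint_closeness_formula} at $n=k+1$, while \cref{lemma:in-eq:p_diff} at $n=k+1$ needs the ball-memberships from \cref{Iterate_condition} at $n=k+1$. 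The resolution is that \cref{lemma:assumptions}'s proof derives $\alpha^{k+1}\in B(\opt\alpha,r_0)$ and $u^{k+1}\in B(\opt u,\sqrt{\sigma^{-1}\inv\phi_u\tau}\,r_0)$ from the $ZM$-norm decrease \emph{before} invoking \cref{lemma:p-np}; so I first extract those two memberships, then run \cref{lemma:in-eq:p_diff} at $n=k+1$ to get \eqref{grad:adjoint_closeness_formula} there, then — or rather, \cref{lemma:p-np} directly — to get $\onorm{p^{k+2}}\le N_p$, completing \cref{Iterate_condition} at $n=k+1$.) Finally, \cref{Fejer_lemma_implication} at $n=k+1$ gives \eqref{Fejer_monot_u} for $n=k+1$, closing the induction.

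\textbf{Main obstacle.} The only delicate point is the bookkeeping of the mutual dependencies among \cref{lemma:assumptions}, \cref{lemma:in-eq:p_diff}, \cref{lemma:p-np}, and \cref{lemma:grad:u_u_alpha_closeness_formula} — making sure the induction is set up so that at each stage the exact set of hypotheses each lemma needs has already been established, and in particular that the ball-memberships composing \cref{Iterate_condition} at $n=k+1$ are obtained from the $ZM$-contraction (which is available) rather than from a not-yet-proven adjoint estimate. No new estimates are needed; it is purely a matter of ordering the lemma applications correctly, which the structure of \cref{lemma:assumptions}'s proof already anticipates.
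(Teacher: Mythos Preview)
Your proposal is correct and follows essentially the same induction as the paper's proof: carry \cref{Iterate_condition}, \eqref{grad:u_u_alpha_closeness_formula}, and \eqref{Fejer_monot_u} through $n$, using \cref{lemma:in-eq:p_diff}, \cref{lemma:grad:u_u_alpha_closeness_formula}, \cref{lemma:assumptions}, and \cref{Fejer_lemma_implication} in the same order. Your careful untangling of the ``mild circularity'' (extracting the ball-memberships of $\alpha^{k+1}$ and $u^{k+1}$ from the $ZM$-norm decrease \emph{before} invoking \cref{lemma:in-eq:p_diff} at $n=k+1$) is exactly how the paper proceeds as well.
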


\begin{proof}
    Then claim readily follows if we prove by induction for all $n \in \N$ that
    \begin{equation}
        \label{eq:FEFB:fejer:induction}
        \tag{*}
        \cref{Iterate_condition}, \eqref{grad:u_u_alpha_closeness_formula}, \text{ and }  \eqref{Fejer_monot_u}
        \text{ hold}.
    \end{equation}

    We first prove \eqref{eq:FEFB:fejer:induction} for $n=0$.
    \Cref{ass:FEFB:main}\,\ref{ass:FEFB:main:init} directly establishes \eqref{grad:u_u_alpha_closeness_formula}.
    The definition of $r_0$ in \cref{ass:FEFB:main} also establishes that $\alpha^{n}\in B(\opt{\alpha}, r_0)$ and $u^{n}\in B(\opt{u}, \sqrt{\sigma^{-1}\inv \phi_u\tau}r_0)$.
    We have just proved the conditions of \cref{lemma:in-eq:p_diff}, which establishes \eqref{grad:adjoint_closeness_formula} for $n=0$.
    Now \cref{lemma:p-np} establishes $\onorm{p^1} \le N_p$.
    Therefore \cref{Iterate_condition} holds for $n=0$.
    Finally \cref{Fejer_lemma_implication} proves \eqref{Fejer_monot_u} for $n=0$.
    This concludes the proof of the induction base.

    We then make the induction assumption that \eqref{eq:FEFB:fejer:induction} holds for $n\in\{0,\ldots,k\}$ and prove it for $n=k+1$.
    Indeed, the induction assumption and \cref{lemma:grad:u_u_alpha_closeness_formula} give \eqref{grad:u_u_alpha_closeness_formula} for $n=k+1$.
    Next \eqref{Fejer_monot_u} for $n=k$ implies $\alpha^{k+1}\in B( \opt{\alpha},r_0)$ and $u^{k+1} \in B(\opt{u}, \sqrt{\sigma^{-1}\inv \phi_u\tau} r_0)$, where $r_0$ and $r_u$ are as in \cref{ass:FEFB:main}.
    Therefore \cref{lemma:Iterate_condition_corollary} gives $u^{k+2}\in B(\opt{u}, r_u)$
    while  \cref{lemma:in-eq:p_diff} establishes \eqref{grad:adjoint_closeness_formula} for $n=k+1$.
    For all $n\in\{0,\ldots,k\}$, the inequality \eqref{Fejer_monot_u} implies $\norm{x^{n+1}-\opt x}_{ZM} \le \norm{x^n-\opt x}_{ZM}$.
    Therefore \cref{lemma:assumptions} proves \cref{Iterate_condition} and finally \cref{Fejer_lemma_implication} proves \eqref{Fejer_monot_u} and consequently \eqref{eq:FEFB:fejer:induction} for $n=k+1$.
\end{proof}

\begin{theorem}\label{thr:grad-exact-grad_convergence}
    Suppose \cref{ass:FEFB:main} holds. Then $\phi_u\tau^{-1}\norm{u^{n}-\opt u}^2 + \sigma^{-1}\norm{\alpha^{n} - \opt \alpha}^2 \to 0$ linearly.
\end{theorem}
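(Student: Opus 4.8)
The plan is to read off linear convergence directly from the Fejér-type descent estimate \eqref{Fejer_monot_u} of \cref{Fejer_lemma}, which holds for every $n\in\N$ under \cref{ass:FEFB:main}. The starting point is that, by the definitions of the testing and preconditioning operators in \eqref{eq:testing-operator} and \eqref{eq:mkplus1}, the middle (adjoint) block of $ZM$ vanishes, so that
\[
    \norm{x^{n}-\opt x}_{ZM}^2
    =
    \phi_u\tau^{-1}\norm{u^{n}-\opt u}^2 + \sigma^{-1}\norm{\alpha^{n}-\opt\alpha}^2
\]
is exactly the quantity whose decay we must establish.

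First I would rewrite \eqref{Fejer_monot_u} as
\[
    \norm{x^{n+1}-\opt x}_{ZM}^2
    \le
    \norm{x^{n}-\opt x}_{ZM}^2
    - 2\epsilon_u\norm{u^{n+1}-\opt u}^2
    - 2\epsilon_\alpha\norm{\alpha^{n+1}-\opt\alpha}^2 ,
\]
and then bound the two subtracted terms from below by a fixed multiple of $\norm{x^{n+1}-\opt x}_{ZM}^2$. Setting $c \defeq \min\{\tau\gamma_F(1-\kappa),\, 2\sigma\epsilon_\alpha\}$, which is strictly positive since $\epsilon_\alpha>0$ by the inequality in \cref{ass:FEFB:main}\,\cref{ass:FEFB:main:testing-params} and $\gamma_F,\tau,\sigma>0$ with $\kappa<1$, the explicit formula $\epsilon_u=\phi_u\gamma_F(1-\kappa)/2$ gives $2\epsilon_u\ge c\,\phi_u\tau^{-1}$, while $2\epsilon_\alpha\ge c\,\sigma^{-1}$ by the choice of $c$. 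Adding the two termwise inequalities yields $2\epsilon_u\norm{u^{n+1}-\opt u}^2 + 2\epsilon_\alpha\norm{\alpha^{n+1}-\opt\alpha}^2 \ge c\norm{x^{n+1}-\opt x}_{ZM}^2$. Substituting back produces $(1+c)\norm{x^{n+1}-\opt x}_{ZM}^2 \le \norm{x^{n}-\opt x}_{ZM}^2$, and iterating over $n$ gives $\norm{x^{n}-\opt x}_{ZM}^2 \le (1+c)^{-n}\norm{x^{0}-\opt x}_{ZM}^2$, i.e., the claimed linear (geometric) convergence of $\phi_u\tau^{-1}\norm{u^{n}-\opt u}^2 + \sigma^{-1}\norm{\alpha^{n}-\opt\alpha}^2$ with rate $(1+c)^{-1}\in(0,1)$.

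I expect essentially no obstacle at this final stage: the substance — producing the monotone descent \eqref{Fejer_monot_u} with \emph{strictly positive} excess coefficients $\epsilon_u,\epsilon_\alpha$, which in turn rests on the induction in \cref{Fejer_lemma} propagating \cref{Iterate_condition} together with the relative-exactness bounds \eqref{grad:u_u_alpha_closeness_formula} and \eqref{grad:adjoint_closeness_formula} — has already been carried out. The only point needing a moment's care is choosing the single constant $c$ so that the comparison with the $ZM$-norm holds simultaneously for the $u$-block and the $\alpha$-block; this is immediate from the explicit forms of $\epsilon_u$ and of $ZM$, and nothing beyond \cref{ass:FEFB:main} is required.
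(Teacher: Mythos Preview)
Your argument is correct and is essentially the paper's own proof: both extract linear convergence from \eqref{Fejer_monot_u} by comparing $2\epsilon_u$ and $2\epsilon_\alpha$ against the corresponding $ZM$-weights $\phi_u\tau^{-1}$ and $\sigma^{-1}$, and your constant $c$ coincides with the paper's $\mu-1$ where $\mu=\min\{1+2\epsilon_u\phi_u^{-1}\tau,\,1+2\epsilon_\alpha\sigma\}$. The only cosmetic difference is that you make explicit the vanishing of the adjoint block in $ZM$, which the paper uses tacitly when expanding $\norm{x^n-\opt x}_{ZM}^2$.
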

\begin{proof}
    \Cref{Fejer_lemma}, expansion of \eqref{Fejer_monot_u}, and basic manipulation show that
    \begin{align*}
        \phi_u\tau^{-1}\norm{u^{n}-\opt u}^2 + \sigma^{-1}\norm{\alpha^{n} - \opt \alpha}^2
        &
        \ge
        (\phi_u\tau^{-1}+2\varepsilon_u)\norm{u^{n+1}-\opt u}^2 + (\sigma^{-1}+2\epsilon_{\alpha})\norm{\alpha^{n+1} - \opt \alpha}^2
        \\
        &
        = (1+2\varepsilon_u\inv \phi_u\tau)\phi_u\tau^{-1}\norm{u^{n+1}-\opt u}^2 + (1+2\epsilon_{\alpha}\sigma)\sigma^{-1}\norm{\alpha^{n+1} - \opt \alpha}^2
        \\
        &
        \ge
        \mu \bigl(\phi_u\tau^{-1}\norm{u^{n+1}-\opt u}^2 + \sigma^{-1}\norm{\alpha^{n+1} - \opt \alpha}^2\bigr)
    \end{align*}
    for $\mu := \min\{1+2\varepsilon_u\inv \phi_u \tau, 1+2\epsilon_{\alpha}\sigma\}$.
    Since $\mu >1$, linear convergence follows.
\end{proof}

\subsection{Convergence: forward--inexact--forward-backward}
\label{subsec:convergence:FIFB}

We now prove the convergence of \cref{alg:FIFB}.
The overall structure and idea of the proofs follows \cref{subsec:convergence:FEFB} and uses several lemmas from \cref{subsec:convergence:general}.
We first prove monotonicity estimate lemma for the adjoint step and then
that a small enough step length in the outer problem gurantees that the inner and adjoint iterates stay in a small local neighbourhood if they are already in one.
The main result of this subsection is in the final \cref{thr:grad-grad-grad_convergence}.
It proves under \cref{ass:FIFB:main} the linear convergence of $\{(u^n, p^n, \alpha^n)\}_{n \in \N}$ generated by \cref{alg:FIFB} to $(\opt u, \opt p, \opt \alpha)$ solving the first-order optimality condition \eqref{eq:main_optimality_condition}.

\begin{lemma}\label{lemma:FIFB-adjoint:monotonicity}
	Let $u\in U, \alpha\in\AlphaSpace$ and $p_1, p_2, \tilde p \in P.$ Moreover, $\gamma_F\cdot\Id\le\grad_u^2 F(u; \alpha)\le L_F\cdot\Id$ and
	\begin{equation}\label{eq:lemma-adjoint-equation}
		\tilde p \grad_u^2 F(u; \alpha) + \grad_{\alpha u}F(u; \alpha) = 0.
	\end{equation}
	holds. Then
	\begin{align*}
		\oiprod{p_1 \grad_u^2 F(u; \alpha) + \grad_{\alpha u}(u; \alpha)}{p_2 - \tilde p} 
		\ge
		\frac{\gamma_F}{2}\onorm{p_2 - \tilde p}^2
		+  \frac{\gamma_F}{2}\onorm{p_1 - \tilde p}^2
		- \frac{L_F}{2}\onorm{p_2- p_1}^2.
	\end{align*}
	
\end{lemma}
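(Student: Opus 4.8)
The plan is to reduce the claim to the three-point identity \eqref{3-point-identity}, transported to the operator inner product $\oiprod{\freevar}{\freevar}$ of \eqref{def:p-inner_prod}. Write $A \defeq \grad_u^2 F(u;\alpha) \in \linear(U;U)$, which is self-adjoint (being a Hessian) and satisfies $\gamma_F\cdot\Id\le A\le L_F\cdot\Id$ by hypothesis. The adjoint equation \eqref{eq:lemma-adjoint-equation} reads $\grad_{\alpha u}F(u;\alpha) = -\tilde p A$, so the argument of the inner product on the left-hand side simplifies to
\[
    p_1 \grad_u^2 F(u;\alpha) + \grad_{\alpha u}F(u;\alpha) = (p_1 - \tilde p)A .
\]
Setting $a \defeq p_1 - \tilde p$ and $b \defeq p_2 - \tilde p$, so that $a - b = p_1 - p_2$ and $\onorm{b-a} = \onorm{p_2-p_1}$, it therefore suffices to show
\[
    \oiprod{aA}{b} \ge \frac{\gamma_F}{2}\onorm{a}^2 + \frac{\gamma_F}{2}\onorm{b}^2 - \frac{L_F}{2}\onorm{a-b}^2 .
\]

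Next I would record two elementary properties of $\oiprod{\freevar}{\freevar}$ that follow directly from the definition \eqref{def:p-inner_prod} together with the self-adjointness of $A$ (and may also be read off from \cref{sec:separable}): the symmetry $\oiprod{aA}{b} = \oiprod{bA}{a}$ for all $a,b\in P$, obtained termwise from $\iprod{Aa^*\phi_i}{b^*\phi_i} = \iprod{a^*\phi_i}{Ab^*\phi_i}$; and the quadratic-form bounds $\gamma_F\onorm{a}^2 \le \oiprod{aA}{a} \le L_F\onorm{a}^2$, obtained termwise from $\gamma_F\norm{a^*\phi_i}^2 \le \iprod{Aa^*\phi_i}{a^*\phi_i}\le L_F\norm{a^*\phi_i}^2$. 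One also checks $aA \in P$ via $\onorm{aA}\le\norm{A}\onorm{a}$ (using $(aA)^* = Aa^*$), so that all the defining series converge. The symmetry property lets me expand, exactly as in the derivation of \eqref{3-point-identity},
\[
    \oiprod{(a-b)A}{a-b} = \oiprod{aA}{a} - 2\oiprod{aA}{b} + \oiprod{bA}{b},
\]
that is, $\oiprod{aA}{b} = \frac12\oiprod{aA}{a} + \frac12\oiprod{bA}{b} - \frac12\oiprod{(a-b)A}{a-b}$.

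Finally I would apply the quadratic-form bounds to the three terms on the right: $\oiprod{aA}{a}\ge\gamma_F\onorm{a}^2$, $\oiprod{bA}{b}\ge\gamma_F\onorm{b}^2$, and $\oiprod{(a-b)A}{a-b}\le L_F\onorm{a-b}^2$; substituting back $a = p_1-\tilde p$, $b = p_2-\tilde p$ then yields the claim. There is no genuine obstacle here beyond bookkeeping with the operator-valued inner product; the only point requiring care is justifying the termwise manipulations and the convergence of the series defining $\oiprod{\freevar}{\freevar}$, which is why I would either cite the relevant parts of \cref{thm:separable:properties} or verify them inline from \eqref{def:p-inner_prod}.
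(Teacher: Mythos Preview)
Your proof is correct and follows essentially the same approach as the paper: use \eqref{eq:lemma-adjoint-equation} to reduce the left-hand side to $\oiprod{(p_1-\tilde p)\grad_u^2 F(u;\alpha)}{p_2-\tilde p}$, apply the three-point identity, and then invoke the Hessian bounds $\gamma_F\cdot\Id\le\grad_u^2 F\le L_F\cdot\Id$. The only cosmetic difference is that the paper writes out the sum over the basis $\{\phi_i\}$ explicitly and applies \eqref{3-point-identity} and the bounds termwise, whereas you first abstract the needed symmetry and quadratic-form properties of $\oiprod{\freevar A}{\freevar}$ and then work directly at the operator-inner-product level.
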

\begin{proof}
	 Using \cref{eq:lemma-adjoint-equation}, the three-point identity \eqref{3-point-identity} and $\gamma_F\cdot\Id\le\grad_u^2 F(u; \alpha)\le L_F\cdot\Id$ gives
     \[
	\begin{aligned}
		&
		\oiprod{ p_1 \grad_{u}^2 F(u; \alpha) +\grad_{\alpha u} F(u; \alpha)}{p_2-\tilde p}
		\\
        &
		=
		\oiprod{(p_1-\tilde p)\grad_u^2 F(u; \alpha)}{p_2-\tilde p}
		\\
        &
		=
		\sum_{i\in I} \iprod{\grad_{u}^2 F(u; \alpha)(p_1-\tilde p)^*\phi_i}{(p_2-\tilde p)^*\phi_i}
		\\
        &
		=
		\sum_{i\in I}
		\left(
		\frac{1}{2} \norm{(p_1-\tilde p)^*\phi_i}^2_{\grad_{u}^2 F(u; \alpha)}
		-
		\frac{1}{2}\norm{(p_2-p_1)^*\phi_i} ^2_{\grad_{u}^2 F(u; \alpha)}
		+
		\frac{1}{2}\norm{(p_2-\tilde p)^*\phi_i} ^2_{\grad_{u}^2 F(u; \alpha)}
		\right)
		\\
		&
		\geq
		\sum_{i\in I}\left(
		\frac{\gamma_F}{2} \norm{(p^{k+1}-\tilde p)^*\phi_i}^2
		-
		\frac{L_F}{2}\norm{(p^{k+2}-p^{k+1})^*\phi_i} ^2
		+
		\frac{\gamma_F}{2}\norm{(p^{k+2}-\tilde p)^*\phi_i} ^2
		\right)
		\\
		&
		=
		\frac{\gamma_F}{2}\onorm{p_2 - \tilde p}^2
		+  \frac{\gamma_F}{2}\onorm{p_1 - \tilde p}^2
		- \frac{L_F}{2}\onorm{p_2- p_1}^2.
	\end{aligned}
    \]
\end{proof}

\begin{lemma}\label{lemma:grad:up_up_alpha_closeness_formula}
    Let $k \in \N$. Suppose \cref{ass:FIFB:main} holds, and \cref{Iterate_condition} and
    \begin{equation}
        \label{grad:up_up_alpha_closeness_formula}
        \norm{u^{n+1}-S_u(\alpha^{n})} \leq C_u \norm{\alpha^{n} - \opt{\alpha}} \, \text{ and } \, \onorm{p^{n+1}-\grad_{\alpha}S_u(\alpha^{n})} \leq C_p \norm{\alpha^{n} - \opt{\alpha}}
    \end{equation}
    hold for $n=k$.
    Then \eqref{grad:up_up_alpha_closeness_formula} holds for $n = k+1.$
\end{lemma}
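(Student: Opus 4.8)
The plan is to prove the two bounds in \eqref{grad:up_up_alpha_closeness_formula} at $n=k+1$ separately, handling the inner iterate first and the adjoint iterate second. The inner bound $\norm{u^{k+2}-S_u(\alpha^{k+1})}\le C_u\norm{\alpha^{k+1}-\opt\alpha}$ is immediate from \cref{lemma:grad:u_u_alpha_closeness_formula}, since the hypothesis \eqref{grad:up_up_alpha_closeness_formula} at $n=k$ furnishes both \eqref{grad:u_u_alpha_closeness_formula} and \eqref{grad:adjoint_closeness_formula} (the latter with constant $C=C_p$, using $\alpha^k\in B(\opt\alpha,r)$ from \cref{Iterate_condition}). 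That lemma also gives, as byproducts, $\alpha^{k+1}\in B(\opt\alpha,2r_0)$, the step bound $\norm{\alpha^{k+1}-\alpha^k}<\norm{\alpha^k-\opt\alpha}$ (outside the degenerate case $\alpha^{k+1}=\opt\alpha$), and---inside its proof---the inclusions $u^{k+2},S_u(\alpha^{k+1})\in B(\opt u,r_u)$. From \cref{lemma:alpha-option}, whose hypotheses are met at $n=k$, I would also record $\norm{\alpha^{k+1}-\alpha^k}\le\sigma C_\alpha\norm{\alpha^k-\opt\alpha}$ via \eqref{ineq:alpha_in_exact_grad}.

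For the adjoint bound I would introduce the exact adjoint solution $\tilde p\defeq S_p(u^{k+2},\alpha^{k+1})$, which by \eqref{def:S_p} satisfies $\tilde p\,\grad_u^2F(u^{k+2};\alpha^{k+1})+\grad_{\alpha u}F(u^{k+2};\alpha^{k+1})=0$; the Hessian bound $\gamma_F\cdot\Id\le\grad_u^2F(u^{k+2};\alpha^{k+1})\le L_F\cdot\Id$ needed here holds by \cref{ass:FIFB:main:neighbourhoods}, since $u^{k+2}\in B(\opt u,r_u)$ and $\alpha^{k+1}\in B(\opt\alpha,2r)$. Applying \cref{lemma:FIFB-adjoint:monotonicity} with $p_1=p^{k+1}$ and $p_2=p^{k+2}$, then rewriting its left-hand side through the implicit adjoint update $\theta^{-1}(p^{k+1}-p^{k+2})=p^{k+1}\grad_u^2F(u^{k+2};\alpha^{k+1})+\grad_{\alpha u}F(u^{k+2};\alpha^{k+1})$, inserting the three-point identity \eqref{3-point-identity}, and discarding the $\onorm{p^{k+2}-p^{k+1}}^2$ term using $\theta<1/L_F$, yields the contraction $\onorm{p^{k+2}-\tilde p}\le C_{F,S}^{-1}\onorm{p^{k+1}-\tilde p}$---the adjoint counterpart of the $C_F^{-1}$ contraction used in \cref{lemma:grad:u_u_alpha_closeness_formula}.

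Next I would estimate $\onorm{p^{k+1}-\tilde p}$ and $\onorm{\tilde p-\grad_\alpha S_u(\alpha^{k+1})}$. Inserting $\grad_\alpha S_u(\alpha^k)=S_p(S_u(\alpha^k),\alpha^k)$ into the first and $S_u(\alpha^{k+1})$ into the second, and invoking the $L_{S_p}$-Lipschitz continuity of $S_p$ from \cref{lemma:S_p-Lipschitz} on the neighbourhoods verified above, the $L_{S_u}$-Lipschitz continuity of $S_u$, the inner bound just proved, and the hypothesis \eqref{grad:up_up_alpha_closeness_formula} at $n=k$, produces a bound of $\onorm{p^{k+2}-\grad_\alpha S_u(\alpha^{k+1})}$ by a nonnegative linear combination of $\norm{\alpha^k-\opt\alpha}$, $\norm{\alpha^{k+1}-\alpha^k}$ and $\norm{\alpha^{k+1}-\opt\alpha}$. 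Using the reverse triangle inequality $\norm{\alpha^{k+1}-\opt\alpha}\ge\norm{\alpha^k-\opt\alpha}-\norm{\alpha^{k+1}-\alpha^k}$ and then $\norm{\alpha^{k+1}-\alpha^k}\le\sigma C_\alpha\norm{\alpha^k-\opt\alpha}$, the desired inequality $\onorm{p^{k+2}-\grad_\alpha S_u(\alpha^{k+1})}\le C_p\norm{\alpha^{k+1}-\opt\alpha}$ collapses to the scalar condition
\[
    \sigma C_\alpha\bigl[(1+L_{S_u})L_{S_p}+C_{F,S}C_p-(1+C_{F,S})L_{S_p}C_u\bigr]\le(C_{F,S}-1)C_p-(1+C_{F,S})L_{S_p}C_u,
\]
which is exactly the second branch of the minimum defining $\sigma$ in \cref{ass:FIFB:main:outer-step-length}. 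In the degenerate case $\alpha^{k+1}=\opt\alpha$ one gets $\alpha^k=\opt\alpha$, hence $u^{k+1}=\opt u$, $p^{k+1}=\opt p$, and since $\opt p$ solves the adjoint equation at $(\opt u,\opt\alpha)$ also $p^{k+2}=\opt p$, so the claim is trivial.

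The step I expect to cause the most trouble is the neighbourhood bookkeeping: each evaluation of $\grad_u^2F$, $\grad_{\alpha u}F$, $S_p$ and $S_u$ must be located in a ball on which the corresponding Hessian bound or Lipschitz estimate is actually guaranteed, so the argument has to borrow the inclusions established inside \cref{lemma:grad:u_u_alpha_closeness_formula} and \cref{lemma:Iterate_condition_corollary}. The remaining effort is the routine but slightly delicate matching of the accumulated error constants to the precise form of the outer step length bound in \cref{ass:FIFB:main:outer-step-length}.
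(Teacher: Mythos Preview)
Your proposal is correct and follows essentially the same route as the paper: first invoke \cref{lemma:grad:u_u_alpha_closeness_formula} for the inner bound, then use \cref{lemma:FIFB-adjoint:monotonicity} together with the implicit adjoint update and the three-point identity to obtain the $C_{F,S}^{-1}$ contraction toward $S_p(u^{k+2},\alpha^{k+1})$, and finally close with the Lipschitz estimates for $S_p$ and $S_u$ and the second branch of the step-length bound in \cref{ass:FIFB:main}\,\ref{ass:FIFB:main:outer-step-length}. One small correction: the inclusion $u^{k+2}\in B(\opt u,r_u)$ is \emph{not} established inside the proof of \cref{lemma:grad:u_u_alpha_closeness_formula} (that proof only places $u^{k+1}$ and $S_u(\alpha^{k+1})$ there); you must derive it here from the inner bound you have just proved at $n=k+1$ together with the Lipschitz continuity of $S_u$, via $\norm{u^{k+2}-\opt u}\le (C_u+L_{S_u})\norm{\alpha^{k+1}-\opt\alpha}\le 2(C_u+L_{S_u})r_0\le r_u$, which is exactly why the FIFB version of \cref{ass:FIFB:main}\,\ref{ass:FIFB:main:local} carries the factor $2(C_u+L_{S_u})$ rather than the $2L_{S_u}$ of the FEFB case.
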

\begin{proof}
    Observe that \eqref{grad:up_up_alpha_closeness_formula} for $n=k$ implies \eqref{grad:u_u_alpha_closeness_formula}  as well as \eqref{grad:adjoint_closeness_formula} for $n=k$ and $C=C_p$.
    \Cref{lemma:grad:u_u_alpha_closeness_formula} therefore proves the first part of \eqref{grad:up_up_alpha_closeness_formula} for $n=k+1$, i.e.,
    \begin{equation}
        \label{grad:up_up_alpha_closeness_formula:p1}
        \norm{u^{k+2}-S_u(\alpha^{k+1})} \leq C_u \norm{\alpha^{k+1} - \opt{\alpha}}.
    \end{equation}
        
    We still need to show the second part $\onorm{p^{k+2}-\grad_{\alpha}S_u(\alpha^{k+1})} \leq C_p \norm{\alpha^{k+1} - \opt{\alpha}}$.
    We follow the fundamental argument of the testing approach (see the end of the proof of \cref{Fejer_lemma_implication}) and use \cref{ass:FIFB:main}\,\cref{ass:FIFB:main:neighbourhoods,ass:FIFB:main:solution-map-and-F}.
    For the latter we need $\alpha^k, \alpha^{k+1}\in B(\opt{\alpha}, 2r)$ and $u^{k+2}, S_u(\alpha^{k})\in B(\opt{u}, r_u).$  We have $\alpha^{k}\in B(\opt{\alpha},r_0)$ by \cref{Iterate_condition} and $\alpha^{k+1}\in B(\opt{\alpha},2r_0)$ by \cref{lemma:grad:u_u_alpha_closeness_formula}. Thus we may use the Lipschitz continuity of $S_u$ with the triangle inequality and \eqref{grad:up_up_alpha_closeness_formula:p1} to get $S_u(\alpha^k)\in B(S_u(\opt\alpha), L_{S_u}r_0)\subset B(\opt{u}, r_u)$ and
    \[
        \begin{aligned}
        \norm{u^{k+2}-\opt{u}}
        &
        \leq \norm{u^{k+2}-S_u(\alpha^{k+1})} + \norm{S_u(\alpha^{k+1}) - S_u(\opt{\alpha})}
        \\
        &
        \leq (C_u+ L_{S_u})\norm{\alpha^{k+1} - \opt{\alpha} } \leq (C_u+ L_{S_u})2r_0,
        \end{aligned}
    \]
	which yields $u^{k+2}\in B(\opt u, r_u).$
    The definition of $S_p$ in \eqref{def:S_p} implies
    \[
    	S_p(u^{k+2}, \alpha^{k+1})\grad_{u}^2 F(u^{k+2}; \alpha^{k+1}) + \grad_{\alpha u} F(u^{k+2}; \alpha^{k+1})=0.
    \]
    Since also
    $\gamma_F\cdot\Id\le\grad_u^2 F\le L_F\cdot\Id$
    in $B(\opt{u}, r_u) \times B(\opt{\alpha}, 2r)$ from \cref{ass:FIFB:main}\,\ref{ass:FIFB:main:neighbourhoods}, we get
    \begin{equation}
  	\label{ineq:FIFB-p-step-monotonicity}
    \begin{aligned}[t]
        &
        \oiprod{ p^{k+1} \grad_{u}^2 F(u^{k+2}; \alpha^{k+1}) +\grad_{\alpha u} F(u^{k+2}; \alpha^{k+1})}{p^{k+2}-S_p(u^{k+2},\alpha^{k+1})}
        \\
        &
        \geq
        \frac{\gamma_F}{2}\onorm{p^{k+2}-S_p(u^{k+2},\alpha^{k+1})}^2
        + \frac{\gamma_F}{2}\onorm{p^{k+1}-S_p(u^{k+2},\alpha^{k+1})}^2
        - \frac{L_F}{2}\onorm{p^{k+2} - p^{k+1}}^2 =: A
    \end{aligned}
    \end{equation}
    from \cref{lemma:FIFB-adjoint:monotonicity}.
    By the $p$ update of the FIFB in the implicit form \eqref{eq:FIFB:implicit0}, we have
    \[
        p^{k+1} \grad_{u}^2 F(u^{k+2}; \alpha^{k+1}) +\grad_{\alpha u} F(u^{k+2}; \alpha^{k+1})
        = -\inv\theta(p^{k+2}-p^{k+1}).
    \]
    Combining with \eqref{ineq:FIFB-p-step-monotonicity} gives
    \[
        \begin{aligned}[t]
            \MoveEqLeft[1]
            -\oiprod{p^{k+2}-p^{k+1}}{p^{k+2}-S_p(u^{k+2},\alpha^{k+1})}
            \ge
            \theta A.
        \end{aligned}
    \]
    An application of the three-point identity \eqref{3-point-identity} with $\theta L_F \le 1$ from \cref{ass:FIFB:main}\,\cref{ass:FIFB:main:inner-and-adjoint-step-length} now yields for $C_{F,S} = \sqrt{(1+\theta \gamma_F)/(1-\theta \gamma_F)}$ the estimate
    \[
        \onorm{p^{k+2}-S_p(u^{k+2},\alpha^{k+1})} \leq C_{F,S}^{-1} \onorm{p^{k+1}-S_p(u^{k+2},\alpha^{k+1})}.
    \]
    This estimate and the triangle inequality give
    \begin{equation}
  	\label{grad:adjoint_closeness_bound}
    \begin{aligned}
        \onorm{p^{k+2}-\grad_{\alpha}S_u(\alpha^{k+1})}
        &
        =
        \onorm{p^{k+2}- S_p(S_u(\alpha^{k+1}),\alpha^{k+1})}
        \\
        \MoveEqLeft[8]
        \leq \onorm{p^{k+2}-S_p(u^{k+2},\alpha^{k+1})} + \onorm{S_p(u^{k+2},\alpha^{k+1}) - S_p(S_u(\alpha^{k+1}),\alpha^{k+1})}
        \\
        \MoveEqLeft[8]
        \leq C_{F,S}^{-1} \onorm{p^{k+1}-S_p(u^{k+2},\alpha^{k+1})} + \onorm{S_p(u^{k+2},\alpha^{k+1}) - S_p(S_u(\alpha^{k+1}),\alpha^{k+1})}
        =: E_1 + E_2.
	\end{aligned}
    \end{equation}
    
    The solution map $S_u$ is Lipschitz in $B(\opt{\alpha}, 2r)$ and $S_p$ is Lipschitz in $B(\opt{u}, r_u)\times B(\opt{\alpha}, 2r)$ due to \cref{ass:FIFB:main}\,\cref{ass:FIFB:main:neighbourhoods,ass:FIFB:main:solution-map-and-F}, and \cref{lemma:S_p-Lipschitz}.
    Combined with the triangle inequality, \eqref{grad:up_up_alpha_closeness_formula} for $n=k$ and \eqref{grad:up_up_alpha_closeness_formula:p1}, we obtain
    \begin{equation}
  	\label{grad:adjoint_closeness_bound_E1}
    \begin{aligned}[t]
    		C_{F,S} E_1
    		&
    		\leq
    		\onorm{p^{k+1}-S_p(S_u(\alpha^{k}),\alpha^{k})} + \onorm{S_p(u^{k+2},\alpha^{k+1})-S_p(S_u(\alpha^{k}),\alpha^{k})}
    		\\
    		&
    		\leq
            \onorm{p^{k+1}-\grad_{\alpha} S_u(\alpha^{k}))} + L_{S_p} \left(\norm{u^{k+2} - S_u(\alpha^{k})} + \norm{\alpha^{k+1} -  \alpha^{k}}\right)
    		\\
            &
    		\leq
    		C_p \norm{\alpha^{k} - \opt{\alpha}}  + L_{S_p} \left(\norm{u^{k+2} - S_u(\alpha^{k+1})} + \norm{S_u(\alpha^{k+1})-S_u(\alpha^{k})} + \norm{\alpha^{k+1} -  \alpha^{k}}\right)
    		\\
    	    &
    		\leq
    		E_3 + L_{S_p} C_u \norm{\alpha^{k+1} - \opt{\alpha}}
    \end{aligned}
    \end{equation}
    for
    \[
        E_3
        \defeq C_p \norm{\alpha^{k} - \opt{\alpha}} + L_{S_p}(1+L_{S_u})\norm{\alpha^{k+1} -  \alpha^{k}}.
    \]
  	Using again the Lipschitz continuity of $S_p$ and \eqref{grad:up_up_alpha_closeness_formula:p1}, we get
    \begin{equation}
    	\label{grad:adjoint_closeness_bound_E2}
			E_2
            \leq
            L_{S_p}\norm{u^{k+2} - S_u(\alpha^{k+1})}
            \leq
            L_{S_p} C_u \norm{\alpha^{k+1} -  \opt{\alpha}} .
    \end{equation}
	Inserting \cref{grad:adjoint_closeness_bound_E1,grad:adjoint_closeness_bound_E2} into \cref{grad:adjoint_closeness_bound} yields
	\[
        \onorm{p^{k+2}-\grad_{\alpha}S_u(\alpha^{k+1})}
        \leq
        C_{F,S}^{-1} E_3
        +
        (C_{F,S}^{-1} + 1)
        L_{S_p} C_u \norm{\alpha^{k+1} -  \opt{\alpha} }.
	\]
    Therefore the claim follows if we show that
    \begin{equation}\label{ineq:sigma_adjoint_upper_bound}
            C_{F,S}^{-1} E_3
            \leq (C_p - (C_{F,S}^{-1}+1) L_{S_p} C_u) \norm{\alpha^{k+1} - \opt{\alpha}}.
    \end{equation}
    
    \Cref{lemma:alpha-option} proves \eqref{ineq:alpha_in_exact_grad} with $C=C_p$.
    Together with \cref{ass:FIFB:main}\,\cref{ass:FIFB:main:outer-step-length} it yields
    \[
        \norm{\alpha^{k+1} - \alpha^k} \leq \sigma C_{\alpha} \norm{\alpha^{k} - \opt{\alpha}}  \leq \frac{(C_{F,S}-1)C_p- (1+C_{F,S})L_{S_p}C_u}{(1+L_{S_u})L_{S_p}+C_{F,S}C_p- (1+C_{F,S})L_{S_p}C_u} \norm{\alpha^{k} - \opt{\alpha}}.
    \]
    Multiplying by $(1+L_{S_u})L_{S_p}+C_{F,S}C_p- (1+C_{F,S})L_{S_p}C_u,$ rearranging terms, and continuing with the triangle inequality gives \eqref{ineq:sigma_adjoint_upper_bound}. Indeed,
    \begin{align*}
            E_3
            &
            \leq
            C_{F,S}(C_p - (C_{F,S}^{-1}+1) L_{S_p} C_u)(\norm{\alpha^{k} - \opt{\alpha}} - \norm{\alpha^{k+1} - \alpha^{k}})
            \\
            &
            \leq
            C_{F,S}(C_p - (C_{F,S}^{-1}+1) L_{S_p} C_u)\norm{\alpha^{k+1} - \opt{\alpha}}.
            \qedhere
    \end{align*}
\end{proof}

We now show that the adjoint iterates stay local if the outer iterates do.

Again, by combining the previous lemmas, we prove an estimate from which local convergence is immediate. For this, we recall the definitions of the preconditioning and testing operators $M$ and $Z$ in \eqref{eq:mkplus1_FIFB} and \eqref{eq:testing-operator}.

\begin{lemma}\label{Fejer_lemma_up_implication}
    Suppose \cref{ass:FIFB:main,Iterate_condition}, and the inner and adjoint exactness estimate \eqref{grad:up_up_alpha_closeness_formula} hold for $n\in\N.$ Then
    \begin{equation}
        \label{Fejer_monot_up}
        \norm{x^{n+1}-\opt x}_{ZM}^2
        + 2\varepsilon_u\norm{u^{n+1}-\opt u}^2 + 2\varepsilon_p\norm{p^{n}-\opt p}^2 + 2\varepsilon_{\alpha}\norm{\alpha^{n+1} - \opt{\alpha}}^2
        \leq
        \norm{x^n-\opt x}_{ZM}^2
    \end{equation}
    for
    \[
    	\epsilon_u \defeq \frac{\phi_u\gamma_F(1-\kappa)}{2}-C_{\alpha,1}
    	>
    	0,
    	\quad
    	\epsilon_p \defeq \frac{\phi_p \gamma_F}{2}
    	>
    	0,
    	\quad\text{and}\quad
    	\epsilon_{\alpha} \defeq
    	\frac{\gamma_\alpha -  C_{\alpha,1} -  \sqrt{C_{\alpha,1}^2+4C_{\alpha,2}^2}}{2}
    	>
    	0
    \]
    where $\phi_u, \phi_p>0$ are as in \cref{ass:FIFB:main},
    $C_{\alpha,1} \defeq \phi_p\tfrac{L_FL_{S_p}}{\gamma_F}$,
    and $C_{\alpha,2} \defeq \bigl(L_{\grad J}N_p + L_{S_p} N_{\grad J}\bigr) \tfrac{C_u}{2}$.
\end{lemma}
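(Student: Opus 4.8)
The plan is to follow the template of the proof of \cref{Fejer_lemma_implication} for the FEFB; the only genuinely new ingredient is that the middle (adjoint) row of the tested optimality operator no longer vanishes, but must instead be handled through \cref{lemma:FIFB-adjoint:monotonicity}, which plays the role for the $p$-variable that \cref{lemma:inner-problem-monotonicity} plays for $u$ and \cref{lemma:outer-problem-monotonicity} for $\alpha$. First I would establish the $ZM$-relative monotonicity estimate
\[
    \iprod{ZH_{n+1}(x^{n+1})}{x^{n+1}-\opt{x}} \ge \Vfun_{n+1}(\opt{x}) - \tfrac12\norm{x^{n+1}-x^n}^2_{ZM}
\]
with $\Vfun_{n+1}(\opt{x}) \defeq \varepsilon_u\norm{u^{n+1}-\opt u}^2 + \varepsilon_p\onorm{p^n-\opt p}^2 + \varepsilon_\alpha\norm{\alpha^{n+1}-\opt\alpha}^2$ and $M$, $Z$ as in \eqref{eq:mkplus1_FIFB} and \eqref{eq:testing-operator-FIFB}, by expanding the left-hand side into three rows and estimating each. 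Combining this with the implicit form \eqref{eq:FEFB:implicit} of the FIFB and the three-point identity \eqref{3-point-identity} in the $ZM$-norm then telescopes to \eqref{Fejer_monot_up}, verbatim as in the ``fundamental argument of the testing approach'' at the end of the proof of \cref{Fejer_lemma_implication}.

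For the first row, scaled by $\phi_u$, \cref{lemma:inner-problem-monotonicity} (applicable since \cref{Iterate_condition} holds) gives a $\phi_u\tfrac{\gamma_F(1-\kappa)}{2}\norm{u^{n+1}-\opt u}^2$ gain, a $\tfrac{\phi_u L_F}{4\kappa}\norm{u^{n+1}-u^n}^2$ slack absorbed into $\tfrac{\phi_u}{2\tau}\norm{u^{n+1}-u^n}^2$ because $\tau L_F\le 2\kappa$ by \cref{ass:FIFB:main}\,\ref{ass:FIFB:main:inner-and-adjoint-step-length}, and a negative $\tfrac{\phi_u L_{\grad F,\opt u}^2}{2\gamma_F(1-\kappa)}\norm{\alpha^n-\opt\alpha}^2$ term. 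For the third row, \cref{lemma:outer-problem-monotonicity} applied with a parameter $d>0$ to be tuned—using that \eqref{grad:up_up_alpha_closeness_formula} supplies \eqref{grad:up_up_alpha_closeness_formula_0}—gives $\gamma_\alpha$-proportional gains on $\norm{\alpha^{n+1}-\opt\alpha}^2$ and $\norm{\alpha^n-\opt\alpha}^2$ and a $\tfrac{L_\alpha}{2}\norm{\alpha^{n+1}-\alpha^n}^2$ slack absorbed into $\tfrac1{2\sigma}\norm{\alpha^{n+1}-\alpha^n}^2$ since $\sigma<1/L_\alpha$ by \cref{lemma:sigma_bound_corollary}.

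The middle row, $\phi_p\oiprod{p^n\grad_u^2 F(u^{n+1};\alpha^n)+\grad_{\alpha u}F(u^{n+1};\alpha^n)}{p^{n+1}-\opt p}$, is the crux. Since $u^{n+1}\in B(\opt u,r_u)$ by \cref{lemma:Iterate_condition_corollary} and $\alpha^n\in B(\opt\alpha,r_0)\subset B(\opt\alpha,2r)$ by \cref{Iterate_condition}, we have $\gamma_F\cdot\Id\le\grad_u^2 F(u^{n+1};\alpha^n)\le L_F\cdot\Id$, so $\tilde p\defeq S_p(u^{n+1},\alpha^n)$ is well-defined and annihilates $\tilde p\grad_u^2 F(u^{n+1};\alpha^n)+\grad_{\alpha u}F(u^{n+1};\alpha^n)$. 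Splitting $p^{n+1}-\opt p=(p^{n+1}-\tilde p)+(\tilde p-\opt p)$, \cref{lemma:FIFB-adjoint:monotonicity} with $p_1=p^n$, $p_2=p^{n+1}$ applied to the first piece yields the positive margins $\phi_p\tfrac{\gamma_F}{2}\onorm{p^{n+1}-\tilde p}^2$ and $\phi_p\tfrac{\gamma_F}{2}\onorm{p^n-\tilde p}^2$ against a $\phi_p\tfrac{L_F}{2}\onorm{p^{n+1}-p^n}^2$ slack, absorbed into $\tfrac{\phi_p}{2\theta}\onorm{p^{n+1}-p^n}^2$ because $\theta L_F<1$. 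For the second piece I would use the adjoint update $p^n\grad_u^2 F(u^{n+1};\alpha^n)+\grad_{\alpha u}F(u^{n+1};\alpha^n)=-\theta^{-1}(p^{n+1}-p^n)$ to bound it below by $-\phi_p\theta^{-1}\onorm{p^{n+1}-p^n}\onorm{\tilde p-\opt p}$, and then control $\onorm{\tilde p-\opt p}$ via the Lipschitz continuity of $S_p$ (\cref{lemma:S_p-Lipschitz}) and $S_u$, using $\opt p=\grad_\alpha S_u(\opt\alpha)=S_p(\opt u,\opt\alpha)$, $\grad_\alpha S_u(\alpha^n)=S_p(S_u(\alpha^n),\alpha^n)$ and \eqref{grad:up_up_alpha_closeness_formula}, by $L_{S_p}$ times a combination of $\norm{u^{n+1}-\opt u}$ and $\norm{\alpha^n-\opt\alpha}$. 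Distributing the resulting cross-terms by Young's inequality into the remaining $\onorm{p^{n+1}-p^n}^2$ budget and into the $\norm{u^{n+1}-\opt u}^2$ and $\norm{\alpha^n-\opt\alpha}^2$ margins is exactly what produces the correction $C_{\alpha,1}=\phi_p L_F L_{S_p}/\gamma_F$ subtracted in $\varepsilon_u$ and in $\varepsilon_\alpha$; the positive $\onorm{p^{n+1}-\tilde p}^2$ and $\onorm{p^n-\tilde p}^2$ margins are used to absorb the analogous terms coming from the third row and to recover $\varepsilon_p\onorm{p^n-\opt p}^2$ up to further $\norm{\alpha^n-\opt\alpha}^2$-corrections. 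It is also advantageous, in the third-row estimate, to keep $\onorm{p^{n+1}-\grad_\alpha S_u(\alpha^n)}\le\onorm{p^{n+1}-\tilde p}+L_{S_p}C_u\norm{\alpha^n-\opt\alpha}$ rather than bounding it by $C_p\norm{\alpha^n-\opt\alpha}$, which is why $C_{\alpha,2}$ involves $L_{S_p}C_u$ and not $C_p$.

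Collecting the three rows and the $\tfrac12\norm{x^{n+1}-x^n}^2_{ZM}$ slack, the net coefficients of $\norm{u^{n+1}-\opt u}^2$, $\onorm{p^n-\opt p}^2$ and $\norm{\alpha^{n+1}-\opt\alpha}^2$ come out to $\varepsilon_u$, $\varepsilon_p$ and $\varepsilon_\alpha$, while the net coefficient of $\norm{\alpha^n-\opt\alpha}^2$—after optimising over $d$ in \cref{lemma:outer-problem-monotonicity} and balancing against the $p$-row cross-terms—is nonnegative; positivity of $\varepsilon_u$, $\varepsilon_p$, $\varepsilon_\alpha$ and nonnegativity of the $\norm{\alpha^n-\opt\alpha}^2$-coefficient are precisely what the two inequalities of \cref{ass:FIFB:main}\,\ref{ass:FIFB:main:testing-params}, together with $\phi_p\le\phi_u\gamma_F^2(1-\kappa)/(2L_FL_{S_p})$, are designed to guarantee. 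I expect the main obstacle to be exactly this bookkeeping: selecting all the Young weights and the parameter $d$ so that every cross-term lands within an available budget with the stated constants, and verifying that the quadratic inequality arising from the $\norm{\alpha^n-\opt\alpha}^2$ balance is the one encoded by the radical $C_{\alpha,1}+\sqrt{C_{\alpha,1}^2+4C_{\alpha,2}^2}$ appearing in $\varepsilon_\alpha$.
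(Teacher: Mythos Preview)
Your overall architecture is exactly the paper's: establish the $ZM$-monotonicity estimate row by row using \cref{lemma:inner-problem-monotonicity}, an adjoint estimate, and \cref{lemma:outer-problem-monotonicity}, then finish with the implicit form and the three-point identity as in \cref{Fejer_lemma_implication}. The first and third rows and the final telescoping step match the paper verbatim.

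Where you diverge is the middle row. You split the \emph{right} factor $p^{n+1}-\opt p=(p^{n+1}-\tilde p)+(\tilde p-\opt p)$ with $\tilde p=S_p(u^{n+1},\alpha^n)$, apply \cref{lemma:FIFB-adjoint:monotonicity} to the first piece, and bound the second via the update $p^n\grad_u^2F+\grad_{\alpha u}F=-\theta^{-1}(p^{n+1}-p^n)$. The paper instead uses the adjoint equation to write the row as $\oiprod{(p^n-\tilde p)\grad_u^2F}{p^{n+1}-\opt p}$ and splits the \emph{left} factor as $(p^n-\opt p)+(\opt p-\tilde p)$: the $(p^n-\opt p)$-piece is handled by the same three-point computation as in the proof of \cref{lemma:FIFB-adjoint:monotonicity} (with $\opt p$, not $\tilde p$, as pivot), yielding directly the gains $\tfrac{\gamma_F}{2}\onorm{p^n-\opt p}^2$ and $\tfrac{\gamma_F}{2}\onorm{p^{n+1}-\opt p}^2$; the $(\opt p-\tilde p)$-piece is bounded by Cauchy--Schwarz, $\norm{\grad_u^2F}\le L_F$, and \cref{lemma:S_p-Lipschitz}, then Young'd against the $\tfrac{\gamma_F}{2}\onorm{p^{n+1}-\opt p}^2$ gain. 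This produces precisely $C_{\alpha,1}=\phi_p L_FL_{S_p}/\gamma_F$ on $\norm{u^{n+1}-\opt u}^2$ and $\norm{\alpha^n-\opt\alpha}^2$, with no $\theta$-dependence and no need to convert $\tilde p$-centred margins back to $\opt p$. Your route introduces a $\theta^{-1}$ (strictly larger than $L_F$) in place of $L_F$, and leaves you with $\onorm{p^n-\tilde p}^2$, $\onorm{p^{n+1}-\tilde p}^2$ rather than the required $\onorm{p^n-\opt p}^2$; the additional triangle/Young steps to repair this would not land on the stated $C_{\alpha,1}$ and $\varepsilon_\alpha$. Also, for the third row the paper simply invokes \cref{lemma:outer-problem-monotonicity} with $C=C_p$ from \eqref{grad:up_up_alpha_closeness_formula}, rather than refining $\onorm{p^{n+1}-\grad_\alpha S_u(\alpha^n)}$ through $\tilde p$ as you propose.
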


\begin{proof}
    We start by proving the monotonicity estimate
    \begin{equation}
        \label{ineq:grad-ineq-to-prove:0}
        \iprod{ZH_{n+1}(x^{n+1})}{x^{n+1}-\opt{x}}
        \ge
        \Vfun_{n+1}(\opt{x}) -  \frac{1}{2}\norm{x^{n+1}-x^{n}}^2_{ZM}
    \end{equation}
    for $\Vfun_{n+1}(\opt{u}, \opt{p}, \opt{\alpha}) = \varepsilon_u\norm{u^{n+1}-\opt u}^2 + \varepsilon_p\norm{p^{n}-\opt p}^2 + \varepsilon_{\alpha}\norm{\alpha^{n+1} - \opt \alpha}^2$.
    We observe that $\epsilon_u,\epsilon_p,\epsilon_{\alpha}>0$ by \cref{ass:FIFB:main}.
    The monotonicity estimate \cref{ineq:grad-ineq-to-prove:0} expands as
    \begin{gather}
        \label{ineq:grad-ineq-to-prove}
        h_{n+1} \geq
        \Vfun_{n+1}(\opt{u}, \opt{p}, \opt{\alpha})
        -\frac{\phi_u}{2\tau}\norm{u^{n+1}-u^{n}}^2
        -\frac{\phi_p}{2\theta}\onorm{p^{n+1}-p^{n}}^2
        -\frac{1}{2\sigma}\norm{\alpha^{n+1}-\alpha^{n}}^2
    \shortintertext{for (all elements of the set)}
        \nonumber
        h_{n+1}:= \left \langle
        \begin{pmatrix}
            \phi_u\grad_{u}F(u^{n};\alpha^{n})  \\
            \phi_p \left(p^{n} \grad_{u}^2 F(u^{n+1};\alpha^{n})  + \grad_{\alpha u}F(u^{n+1};\alpha^{n})\right) \\
            p^{n+1}\grad_u J(u^{n+1}) + \subdiff R(\alpha^{n+1})
        \end{pmatrix}
        ,
        \begin{pmatrix}
            u^{n+1}-\opt{u} \\
            p^{n+1}-\opt{p}\\
            \alpha^{n+1}-\opt{\alpha}
        \end{pmatrix}
        \right \rangle .
    \end{gather}
    We estimate the three lines of $h_{n+1}$ separately.
    We immediately take care of the first line by using \eqref{u_row_Young:general} from \cref{lemma:inner-problem-monotonicity}.

    For the second line, using
    the optimality condition \eqref{eq:p-row-oc}
    we have
    \begin{equation}
    	\label{eq:FIFB-adjoint-main-1}
    	\begin{aligned}[t]
    		&
    		\phi_p\oiprod{ p^{n} \grad_{u}^2 F(u^{n+1}; \alpha^{n}) +\grad_{\alpha u} F(u^{n+1}; \alpha^{n})}{p^{n+1}-\widehat{p}}
    		\\
    		&
    		=
    		\phi_p\oiprod{ (p^{n}-S_p(u^{n+1},\alpha^{n})) \grad_{u}^2 F(u^{n+1}; \alpha^{n})}{p^{n+1}-\widehat{p}}
    		\\
    		&
    		=
    		\phi_p\oiprod{(p^{n}-\widehat{p} ) \grad_{u}^2 F(u^{n+1}; \alpha^{n})}{p^{n+1}-\widehat{p}}
    		+
    		\phi_p\oiprod{(\widehat{p}-S_p(u^{n+1},\alpha^{n})) \grad_{u}^2 F(u^{n+1}; \alpha^{n})}{p^{n+1}-\widehat{p}}
    		\\
    		&
    		=: \phi_p(E_1 + E_2).
    	\end{aligned}
    \end{equation}
  	We have  $u^{n+1}$, $S_u(\alpha^n)\in B(\opt{u}, r_u)$, and $\alpha^n\in B(\opt{\alpha}, r)$ by \cref{lemma:Iterate_condition_corollary,Iterate_condition}.
  	Thus
  	   $\gamma_F\cdot\Id\le\grad_u^2 F\le L_F\cdot\Id$ 
  	in $B(\opt{u}, r_u) \times B(\opt{\alpha}, 2r)$ and $\norm{\grad_{u}^2 F(u^{n+1};\alpha^{n})} \le L_F$ by \cref{ass:FIFB:main}\,\cref{ass:FIFB:main:neighbourhoods}. We get
  	\begin{equation}
  		\label{ineq:adjoint-FIFB-main-2}
  		E_1
  		\geq
  		\frac{\gamma_F}{2}\onorm{p^{n+1}-\widehat{p}}^2
  		+ \frac{\gamma_F}{2}\onorm{p^{n}-\widehat{p}}^2
  		- \frac{L_F}{2}\onorm{p^{n+1} - p^{n}}^2
  	\end{equation}
  	from \cref{lemma:FIFB-adjoint:monotonicity}. By \cref{thm:separable:properties}\,\cref{item:separable:norm-iprod}
    $\oiprod{\freevar}{\freevar}$ is an inner product and $\onorm{\freevar}$ a norm on $\linear(U; \AlphaSpace),$ and we can use thus Cauchy–Schwarz inequality for them. Therefore, using also \cref{thm:separable:properties}\,\cref{item:separable:operator-norm-combo}, \cref{lemma:S_p-Lipschitz} and Young's inequality, we can estimate
    \begin{equation}
    	\label{ineq:adjoint-FIFB-main-3}
    	\begin{aligned}[t]
    		E_2
    		&
    		\geq
    		- \big|	\oiprod{(\widehat{p}-S_p(u^{n+1},\alpha^{n})) \grad_{u}^2 F(u^{n+1}; \alpha^{n})}{p^{n+1}-\widehat{p}} \big|
    		\\
    		&
    		\geq
    		- \onorm{(\widehat{p}-S_p(u^{n+1},\alpha^{n})) \grad_{u}^2 F(u^{n+1}; \alpha^{n})} \onorm{p^{n+1}-\widehat{p}}
    		\\
    		&
    		\geq
    		- L_F\onorm{S_p(\widehat{u}, \widehat{\alpha})-S_p(u^{n+1},\alpha^{n}))}  \onorm{p^{n+1}-\widehat{p}}
    		\\
    		&
    		\geq
    		 - L_FL_{S_p}\left(\norm{u^{n+1}-\widehat{u}}+ \norm{\alpha^{n} - \widehat{\alpha}} \right) \onorm{p^{n+1}-\widehat{p}}
    		 \\
    		 &
    		 \geq
    		 - \frac{L_FL_{S_p}}{\gamma_F}\left(\norm{u^{n+1}-\widehat{u}}^2 + \norm{\alpha^{n} - \widehat{\alpha}}^2 \right) - \frac{\gamma_F}{2}\onorm{p^{n+1}-\widehat{p}}^2.
    	\end{aligned}
    \end{equation}

	Inserting \cref{ineq:adjoint-FIFB-main-2,ineq:adjoint-FIFB-main-3}
	into
	\cref{eq:FIFB-adjoint-main-1},
	we obtain
	\begin{equation}
		\label{ineq:FIFB-adjoint-main}
		\begin{aligned}[t]
			&
			\phi_p\oiprod{ p^{n} \grad_{u}^2 F(u^{n+1}; \alpha^{n}) +\grad_{\alpha u} F(u^{n+1}; \alpha^{n})}{p^{n+1}-\widehat{p}}
			\\
			&
			\geq
			\frac{\phi_p\gamma_F}{2}\onorm{p^{n}-\widehat{p}}^2
			- \frac{\phi_pL_F}{2}\onorm{p^{n+1} - p^{n}}^2
			- \frac{\phi_p L_FL_{S_p}}{\gamma_F}\left(\norm{u^{n+1}-\widehat{u}}^2 + \norm{\alpha^{n} - \widehat{\alpha}}^2 \right)
		\end{aligned}
	\end{equation}
	
    The factor of the last term equals $C_{\alpha,1}$ from \cref{ass:FIFB:main}\,\cref{ass:FIFB:main:outer-objective}.
    
    Since \cref{Iterate_condition} and \eqref{grad:up_up_alpha_closeness_formula} hold, \cref{lemma:outer-problem-monotonicity} gives \eqref{alpha_row_general} with $C = C_p$ and any $d>0$ for the third line of $h_{n+1}$.
    Summing \cref{u_row_Young:general,ineq:FIFB-adjoint-main,alpha_row_general} we finally deduce
    \[
        \begin{aligned}[t]
        h_{n+1}
        &
        \ge
        \left(\frac{\phi_u\gamma_F(1 - \kappa)}{2}-C_{\alpha,1}\right)\norm{u^{n+1}-\opt u}^2
        - \frac{\phi_u L_F}{4\kappa}\norm{u^{n+1}-u^{n}}^2 + \frac{\phi_p\gamma_F}{2}\onorm{p^{n}-\widehat{p}}^2
        - \frac{\phi_pL_F}{2}\onorm{p^{n+1}-p^{n}}^2
        \\
        \MoveEqLeft[-1]
        - \frac{L_\alpha}{2} \norm{\alpha^{n+1}- \alpha^{n}}^2
        + \biggl(\frac{\gamma_\alpha}{2}- \frac{C_{\alpha,2}}{d} \biggr) \norm{\alpha^{n+1}-\opt \alpha}^2
        +\biggl(\frac{\gamma_\alpha}{2} - \frac{\phi_u L_{\grad F,\opt{u}}^2}{2\gamma_F(1-\kappa)} -  C_{\alpha,1} - C_{\alpha,2}d\biggr)\norm{\alpha^{n}-\opt \alpha}^2.
        \end{aligned}
    \]
    We have
    \[
        \frac{C_{\alpha,2}}{d} = C_{\alpha,1} + C_{\alpha,2}d = \frac{C_{\alpha,1}}{2} + \frac{\sqrt{C_{\alpha,1}^2+4C_{\alpha,2}^2}}{2}
        \quad\text{for}\quad
        d = \frac{-C_{\alpha,1} + \sqrt{C_{\alpha,1}^2+4C_{\alpha,2}^2}}{2C_{\alpha,2}}.
    \]
    Then also $\frac{\gamma_\alpha}{2}- \frac{C_{\alpha,2}}{d}=\epsilon_{\alpha}$.
    It follows
    \[
        h_{n+1}
        \ge
        \epsilon_u\norm{u^{n+1}-\opt u}^2
        - \frac{\phi_u L_F}{4\kappa}\norm{u^{n+1}-u^{n}}^2
        + \epsilon_p\norm{p^{n}-\opt p}^2
        - \frac{\phi_p L_F}{2}\onorm{p^{n+1}-p^{n}}^2
        - \frac{L_\alpha}{2} \norm{\alpha^{n+1}- \alpha^{n}}^2
        + \epsilon_{\alpha} \norm{\alpha^{n+1}-\opt \alpha}^2.
    \]
    Since $\sigma <1/L_\alpha$ by  \cref{lemma:sigma_bound_corollary},
    $L_F/(2\kappa) \le 1/\tau$ and $\theta < 1/L_F$ by \cref{ass:FIFB:main}\,\ref{ass:FIFB:main:inner-and-adjoint-step-length}, we obtain \eqref{ineq:grad-ineq-to-prove}, i.e., \eqref{ineq:grad-ineq-to-prove:0}.
    We finish by applying the fundamental arguments of the testing approach to \eqref{ineq:grad-ineq-to-prove:0} and the general implicit update \eqref{eq:FEFB:implicit} as in \cref{Fejer_lemma_implication}.
\end{proof}

We simplify the assumptions of the previous lemma to just \cref{ass:FIFB:main}.

\begin{lemma}\label{Fejer_lemma_up}
    Suppose \cref{ass:FIFB:main} holds. Then
    \eqref{Fejer_monot_up} holds for any $n\in\N$.
\end{lemma}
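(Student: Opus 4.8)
The plan is to run exactly the same induction over $n\in\N$ as in the FEFB case (\cref{Fejer_lemma}), with each FEFB ingredient replaced by its FIFB analogue. The invariant to propagate is the joint statement that \cref{Iterate_condition}, the inner-and-adjoint relative exactness bounds \eqref{grad:up_up_alpha_closeness_formula}, and the Fejér-type inequality \eqref{Fejer_monot_up} all hold at the given index. Once this invariant is established for every $n$, the lemma follows at once.

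For the base case $n=0$: \cref{ass:FIFB:main}\,\ref{ass:FIFB:main:init} directly gives \eqref{grad:up_up_alpha_closeness_formula} at $n=0$, hence also \eqref{grad:adjoint_closeness_formula} at $n=0$ with $C=C_p$. The definition of $r_0$ in \cref{ass:FIFB:main}\,\ref{ass:FIFB:main:local} together with the diagonal structure of $Z$ and $M$ in \eqref{eq:testing-operator-FIFB} and \eqref{eq:mkplus1_FIFB} places $\alpha^{0}\in B(\opt\alpha,r_0)$ and $u^{0}\in B(\opt u,\sqrt{\sigma^{-1}\inv \phi_u\tau}\,r_0)$; since $r_0\le r$, \cref{lemma:p-np} then yields $\onorm{p^1}\le N_p$, so \cref{Iterate_condition} holds at $n=0$. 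Plugging this into \cref{Fejer_lemma_up_implication} delivers \eqref{Fejer_monot_up} at $n=0$.

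For the induction step, assume the invariant for $n\in\{0,\ldots,k\}$ and derive it at $n=k+1$. \Cref{Iterate_condition} and \eqref{grad:up_up_alpha_closeness_formula} at $n=k$ are precisely the hypotheses of \cref{lemma:grad:up_up_alpha_closeness_formula}, which therefore supplies \eqref{grad:up_up_alpha_closeness_formula}, and hence \eqref{grad:adjoint_closeness_formula} with $C=C_p$, at $n=k+1$. The inequality \eqref{Fejer_monot_up} at $n=k$ gives $\norm{x^{k+1}-\opt x}_{ZM}\le\norm{x^{k}-\opt x}_{ZM}$ and, by the diagonal form of $ZM$, also $\alpha^{k+1}\in B(\opt\alpha,r_0)$, $u^{k+1}\in B(\opt u,\sqrt{\sigma^{-1}\inv \phi_u\tau}\,r_0)$, and the corresponding bound on $\onorm{p^{k+1}-\opt p}$ that keeps the adjoint iterate in the neighbourhood where $S_p$ is Lipschitz. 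Collecting \eqref{Fejer_monot_up} over $n\in\{0,\ldots,k\}$ yields the chain $\norm{x^{n+1}-\opt x}_{ZM}\le\norm{x^{n}-\opt x}_{ZM}$ required by \cref{lemma:assumptions}, which then gives \cref{Iterate_condition} at $n=k+1$; finally \cref{Fejer_lemma_up_implication} gives \eqref{Fejer_monot_up} at $n=k+1$, closing the induction.

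No genuinely new estimate should be needed here — the substance is already carried by \cref{lemma:grad:up_up_alpha_closeness_formula,Fejer_lemma_up_implication,lemma:assumptions,lemma:p-np}. The one point requiring care is the bookkeeping: at each stage one must check that the locality hypotheses of those lemmas are available in exactly the form stated, in particular that the $ZM$-norm decrease transports all three components (including $p$) into their respective neighbourhoods, and that the generic constant $C$ appearing in \eqref{grad:adjoint_closeness_formula}, \cref{lemma:p-np} and \cref{lemma:outer-problem-monotonicity} is consistently instantiated as $C_p$ throughout the FIFB argument.
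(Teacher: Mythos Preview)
Your proposal is correct and follows essentially the same induction as the paper's own proof: the same invariant (\cref{Iterate_condition}, \eqref{grad:up_up_alpha_closeness_formula}, and \eqref{Fejer_monot_up}), the same base case via \cref{ass:FIFB:main}\,\ref{ass:FIFB:main:init}, \cref{lemma:p-np}, and \cref{Fejer_lemma_up_implication}, and the same induction step via \cref{lemma:grad:up_up_alpha_closeness_formula}, \cref{lemma:assumptions}, and \cref{Fejer_lemma_up_implication}. The only extraneous remark is the aside about the $ZM$-norm also bounding $\onorm{p^{k+1}-\opt p}$ to keep the adjoint iterate in a Lipschitz neighbourhood; the paper does not use this, since \cref{Iterate_condition} only requires $\onorm{p^{n+1}}\le N_p$, which is supplied by \cref{lemma:p-np} through \cref{lemma:assumptions}.
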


\begin{proof}
    The claim readily follows if
    we prove by	induction for all $n\in\N$ that
    \begin{equation}
        \label{eq:FIFB:fejer:induction}
        \tag{**}
        \cref{Iterate_condition}, \eqref{grad:up_up_alpha_closeness_formula}, \text{ and }  \eqref{Fejer_monot_up}
        \text{ hold}.
    \end{equation}
    We first prove \eqref{eq:FIFB:fejer:induction} for $n=0.$
    \Cref{ass:FIFB:main}\,\cref{ass:FIFB:main:init} directly establishes \eqref{grad:up_up_alpha_closeness_formula}.
    The definition of $r_0$ in \cref{ass:FIFB:main} also establishes that $\alpha^n\in B(\opt \alpha, r_0)$ and $u^n\in B(\opt u, \sqrt{\sigma^{-1}\inv \phi_u\tau}r_0).$
    We have just proved the conditions of \cref{lemma:p-np}, which gives $\onorm{p^1}\le N_p$. Thus we we proved \cref{Iterate_condition} for $n=0.$
    Now \cref{Fejer_lemma_up_implication} proves \eqref{Fejer_monot_up} for $n=0.$ This concludes the proof of induction base.
    
    We then make the induction assumption that \eqref{eq:FIFB:fejer:induction} holds for $n\in\{0,\ldots,k\}$
    and prove it for $n=k+1.$
    The induction assumption and \cref{lemma:grad:up_up_alpha_closeness_formula} give \eqref{grad:up_up_alpha_closeness_formula} for $n=k+1.$  The inequality  \eqref{Fejer_monot_up} for	$n\in\{0,\ldots,k\}$  also ensures $\norm{x^{n+1}-\opt x}_{ZM} \le \norm{x^{n}-\opt x}_{ZM}$ for $n\in\{0,\ldots,k\}.$  Therefore \cref{lemma:assumptions} proves \cref{Iterate_condition} for $n=k+1$.
    Now \cref{Fejer_lemma_up_implication} shows \eqref{Fejer_monot_up} and concludes the proof of \cref{eq:FIFB:fejer:induction} for $n=k+1$.
\end{proof}

We finally come to the main convergence result for the FIFB.

\begin{theorem}\label{thr:grad-grad-grad_convergence}
	Suppose \cref{ass:FIFB:main} holds. Then $\phi_u\tau^{-1}\norm{u^{n}-\opt u}^2 + \phi_p\theta^{-1}\norm{p^{n}-\opt p}^2 + \sigma^{-1}\norm{\alpha^{n} - \opt \alpha}^2 \to 0$ linearly.
\end{theorem}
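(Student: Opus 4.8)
The argument parallels that of \cref{thr:grad-exact-grad_convergence}: I would extract a geometric contraction from the Fej\'er-type descent estimate \eqref{Fejer_monot_up}, which by \cref{Fejer_lemma_up} holds for every $n\in\N$. Write
\[
    a_n \defeq \norm{x^n-\opt x}_{ZM}^2 = \phi_u\tau^{-1}\norm{u^n-\opt u}^2 + \phi_p\theta^{-1}\onorm{p^n-\opt p}^2 + \sigma^{-1}\norm{\alpha^n-\opt\alpha}^2,
\]
which is exactly the quantity in the statement, and split it as $a_n = w_n + P_n$ with $P_n \defeq \phi_p\theta^{-1}\onorm{p^n-\opt p}^2$ and $w_n \defeq \phi_u\tau^{-1}\norm{u^n-\opt u}^2 + \sigma^{-1}\norm{\alpha^n-\opt\alpha}^2$.

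Next I would rescale the three ``excess'' terms in \eqref{Fejer_monot_up} into these weighted quantities: $2\epsilon_u\norm{u^{n+1}-\opt u}^2 \ge (2\epsilon_u\tau/\phi_u)\,\phi_u\tau^{-1}\norm{u^{n+1}-\opt u}^2$ and $2\epsilon_\alpha\norm{\alpha^{n+1}-\opt\alpha}^2 \ge (2\epsilon_\alpha\sigma)\,\sigma^{-1}\norm{\alpha^{n+1}-\opt\alpha}^2$, while $\epsilon_p=\phi_p\gamma_F/2$ gives the exact identity $2\epsilon_p\onorm{p^n-\opt p}^2 = \gamma_F\theta\,P_n$. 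With $\mu_0 \defeq \min\{2\epsilon_u\tau/\phi_u,\ 2\epsilon_\alpha\sigma\} > 0$ (positive since $\epsilon_u,\epsilon_\alpha>0$ by \cref{ass:FIFB:main}) and $a_{n+1}=w_{n+1}+P_{n+1}$, the estimate \eqref{Fejer_monot_up} then collapses to
\[
    (1+\mu_0)\,w_{n+1} + P_{n+1} \;\le\; w_n + (1-\gamma_F\theta)\,P_n,
\]
where $1-\gamma_F\theta\in(0,1)$ because $\theta\gamma_F<\theta L_F<1$ by \cref{ass:FIFB:main}\,\cref{ass:FIFB:main:inner-and-adjoint-step-length} together with $\gamma_F\le L_F$.

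Then I would divide through by $1+\mu_0$. Setting $\rho_1\defeq(1+\mu_0)^{-1}\in(0,1)$, $\rho_2\defeq 1-\gamma_F\theta\in(0,1)$, $\rho\defeq\max\{\rho_1,\rho_2\}\in(0,1)$, and $V_n\defeq w_n+\rho_1 P_n$, the previous display yields $V_{n+1}=w_{n+1}+\rho_1 P_{n+1}\le \rho_1 w_n + \rho_1\rho_2 P_n \le \rho(w_n+\rho_1 P_n)=\rho V_n$, hence $V_n\le\rho^n V_0$. Since $w_n\ge 0$ gives $\rho_1 P_n\le V_n$, we get $a_n = V_n + (1-\rho_1)P_n \le \rho_1^{-1}V_n = (1+\mu_0)V_n \le (1+\mu_0)\rho^n a_0$, i.e.\ the claimed linear convergence of $a_n=\norm{x^n-\opt x}_{ZM}^2$.

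The one place this departs from \cref{thr:grad-exact-grad_convergence} is that in \eqref{Fejer_monot_up} the adjoint error is controlled at index $n$ rather than $n+1$, so I cannot just read off $\mu=\min\{\ldots\}>1$ as in the exact-adjoint case; this mixed-index bookkeeping is the only non-mechanical step, and it is exactly why $P_n$ must be weighted by $\rho_1=(1+\mu_0)^{-1}$ in the Lyapunov quantity $V_n$, so that the contraction falls out immediately after dividing by $1+\mu_0$. No estimates beyond \cref{Fejer_lemma_up} are needed, so I expect no real difficulty once the constants $\mu_0,\rho_1,\rho_2$ and the indexing are handled correctly.
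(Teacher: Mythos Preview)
Your proof is correct and follows essentially the same approach as the paper: both start from \cref{Fejer_lemma_up}, rewrite \eqref{Fejer_monot_up} as $(1+\mu_0)w_{n+1}+P_{n+1}\le w_n+(1-\gamma_F\theta)P_n$, and extract linear convergence of the (equivalent) Lyapunov quantity $\mu_1 w_n+P_n=(1+\mu_0)V_n$. The only cosmetic difference is that the paper splits into the cases $\mu_1\mu_2\le 1$ and $\mu_1\mu_2>1$ to identify the contraction factor, whereas you handle both at once via $\rho=\max\{\rho_1,\rho_2\}$; the resulting rates coincide.
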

\begin{proof}
	We define $\mu_1:= \min\{(1+2\varepsilon_u\inv \phi_u \tau), (1+2\varepsilon_{\alpha}\sigma)\}$ and $\mu_2:= 1 - 2\varepsilon_p\inv \phi_p\theta.$
	\Cref{Fejer_lemma_up}, expansion of \eqref{Fejer_monot_up}, and basic manipulation show that
	\begin{equation}
		\label{ineq:FIFB-linear}
		\begin{aligned}[t]
			&
			\mu_1 \bigl(\phi_u\tau^{-1}\norm{u^{n+1}-\opt u}^2 + \sigma^{-1}\norm{\alpha^{n+1} - \opt \alpha}^2\bigr) + \phi_p\inv\theta\onorm{p^{n+1}-\opt p}^2
			\\
			&
			\leq
			(1+2\varepsilon_u\inv \phi_u\tau)\phi_u\tau^{-1}\norm{u^{n+1}-\opt u}^2 + (1+2\varepsilon_{\alpha}\sigma)\sigma^{-1}\norm{\alpha^{n+1} - \opt \alpha}^2 + \phi_p\inv\theta\onorm{p^{n+1}-\opt p}^2
			\\
			&
			=
			(\phi_u\tau^{-1}+2\varepsilon_u)\norm{u^{n+1}-\opt u}^2 + (\sigma^{-1}+2\varepsilon_{\alpha})\norm{\alpha^{n+1} - \opt \alpha}^2 + \phi_p\inv\theta\onorm{p^{n+1}-\opt p}^2
			\\
			&
			\leq
			\phi_u\tau^{-1}\norm{u^{n}-\opt u}^2 + \mu_2\phi_p\theta^{-1}\onorm{p^{n}-\opt p}^2 + \sigma^{-1}\norm{\alpha^{n} - \opt \alpha}^2.
		\end{aligned}
	\end{equation}
	There are two separate cases (i) $\mu_1 \mu_2 \leq 1$ and (ii) $\mu_1 \mu_2 > 1.$ In case (i), we have
	\begin{equation*}
		\begin{aligned}
			&
			\phi_u\tau^{-1}\norm{u^{n}-\opt u}^2 + \mu_2\phi_p\theta^{-1}\onorm{p^{n}-\opt p}^2 + \sigma^{-1}\norm{\alpha^{n} - \opt \alpha}^2
			\\
			&
			=
			\inv\mu_1
			\bigl(
			\mu_1
			\bigl(
			\phi_u\tau^{-1}\norm{u^{n}-\opt u}^2 + \sigma^{-1}\norm{\alpha^{n} - \opt \alpha}^2
			\bigr)
			+ \mu_1\mu_2\phi_p\theta^{-1}\onorm{p^{n}-\opt p}^2
			\bigr)
			\\
			&
			\leq
			\inv\mu_1
			\bigl(
			\mu_1
			\bigl(
			\phi_u\tau^{-1}\norm{u^{n}-\opt u}^2 + \sigma^{-1}\norm{\alpha^{n} - \opt \alpha}^2
			\bigr)
			+ \phi_p\theta^{-1}\onorm{p^{n}-\opt p}^2
			\bigr),
		\end{aligned}
	\end{equation*}
	which with \cref{ineq:FIFB-linear} implies linear convergence since $\inv\mu_1\in (0,1).$ In case (ii), we obtain
	\begin{equation*}
		\begin{aligned}
			&
			\phi_u\tau^{-1}\norm{u^{n}-\opt u}^2 + \mu_2\phi_p\theta^{-1}\onorm{p^{n}-\opt p}^2 + \sigma^{-1}\norm{\alpha^{n} - \opt \alpha}^2
			\\
			&
			\leq
			\mu_2
			\bigl(
			\mu_1
			\bigl(
			\phi_u\tau^{-1}\norm{u^{n}-\opt u}^2 +
			 \sigma^{-1}\norm{\alpha^{n} - \opt \alpha}^2
			\bigr)
			+
			\phi_p\theta^{-1}\onorm{p^{n}-\opt p}^2
			\bigr),
		\end{aligned}
	\end{equation*}
	which with \cref{ineq:FIFB-linear} implies linear convergence since $\mu_2\in (0,1).$
\end{proof}

\section{Numerical experiments}
\label{sec:numerical}

We evaluate the performance of our proposed algorithms on parameter learning for (anisotropic, smoothed) total variation image denoising and deconvolution. For a “ground truth” image $b \in \R^{N^2}$ of dimensions $N \times N$, we take
\[
	J(u) = \frac{1}{2}\norm{u-b}_2^2
\]
as the outer fitness function. For $b$ we use a cropped portion of image 02 or 08 from the free Kodak dataset \cite{franzenkodak} converted to gray values in $[0, 1]$. %
The purpose of these numerical experiments is a simple performance comparison between our proposed methods and a few representative approaches from the literature. We therefore only consider a single ground-truth image $b$ and a corresponding corrupted data $z$ in the various inner problems, which we next describe. For proper generalizable parameter learning, multiple such training pairs $(b_i, z_i)$ should be used. This can in principle be done by summing over all the data in both the inner and outer problem; resulting in a higher-dimensional bilevel problem; see, e.g., \cite{calatroni2015bilevel}.
In practise, a large sample count would require stochastic techniques.

\subsection{Denoising}

For denoising we take in \eqref{eq:bilevel-problem} as the inner objective
\[%
    F(u; \alpha) = \frac{1}{2}\norm{u - z}_2^2 + \alpha \norm{Du}_{1,\gamma}
    \quad (u \in \R^{N^2}, \alpha \in [0, \infty)),
\]%
and as the outer regulariser $R \equiv 0$.
The simulated measurement $z$ is obtained from $b$ by adding Gaussian noise of standard deviation $0.1$.
The matrix $D$ is a backward difference operator with Dirichlet boundary conditions. Instead of the one-norm, $\norm{\cdot}_1$, to ensure the twice differentiability of the objective and hence a simple adjoint equation, we use a $C^2$ Huber- or Moreau--Yosida -type approximation with
\[
    \norm{y}_{1, \gamma} := \sum_{i=1}^{2N^2} \rho_{\gamma}(y_i),
    \quad\text{where}\quad
	\rho_{\gamma}(x) \defeq
	\begin{cases}
		-\frac{|x|^3}{3\gamma^2} + \frac{|x|^2}{\gamma} & \text{if } \, |x|\leq \gamma, \\
		|x| - \frac{\gamma}{3} & \text{if } \, |x| > \gamma.
	\end{cases}
\]
We used $\gamma=10^{-4}$ in our experiments.
\begin{figure}[t]
    \centering
    \begin{subfigure}[t]{0.24\textwidth}
        \centering
        \includegraphics[width=\textwidth]{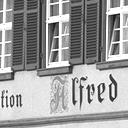}
        \caption{Original}
        \label{original}
    \end{subfigure}%
    \hfill%
    \begin{subfigure}[t]{0.24\textwidth}
        \centering
        \includegraphics[width=\textwidth]{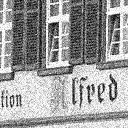}
        \caption{Noisy; error 14.1\%}
        \label{noisy}
    \end{subfigure}%
    \hfill%
    \begin{subfigure}[t]{0.24\textwidth}
        \centering
        \includegraphics[width=\textwidth]{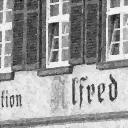}
        \caption{Implicit; error 9.3\%}
        \label{exact}
    \end{subfigure}%
    \hfill%
    \begin{subfigure}[t]{0.24\textwidth}
        \centering
        \includegraphics[width=\textwidth]{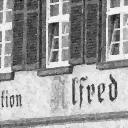}
        \caption{FIFB; error 9.2\%}
        \label{GGG}
    \end{subfigure}
    \caption{Denoising data and results for the implicit and FIFB methods ($N=128$).}
    \label{fig:denoising}
\end{figure}

\subsection{Deconvolution}

For deconvolution, we take as the inner objective
\[%
    F(u; \alpha) = \frac{1}{2}\norm{K(\alpha) * u-z}_2^2 + C\alpha_1 \norm{Du}_{1,\gamma},
    \quad (u \in \R^{N^2}, \alpha \in [0, \infty)^4 ),
\]
and as the outer regulariser $R(\alpha)=\beta(\sum_{i=2}^4\alpha_i-1)^2+\delta_{[0, \infty)}(\alpha_1)$ for a regularisation parameter $\beta=10^4$.
We introduce the constant $C=\tfrac{1}{10}$ to help convergence by ensuring the same order of magnitude for all components of $\alpha$.
The first element of $\alpha$ is the total variation regularization parameter while the rest parametrizes the convolution kernel $K(\alpha)$ as illustrated in \cref{fig:numerical:deblurring-param}. The sum of the elements of the kernel equals $\alpha_2 + \alpha_3 + \alpha_4.$
Operator $r_{\theta}$ rotates image $\theta$ degrees, clockwise for $\theta>0$ and counterclocwise for $\theta<0.$
We form $z$ by computing $r_{-1}(K(\alpha)*r_1(b))$ for $\alpha = [0.15, 0.1, 0.75] $ and adding Gaussian noise of standard deviation $1\cdot 10^{-2}$.

\begin{figure}[t]
    \begin{subfigure}[t]{0.24\textwidth}%
        \resizebox{\linewidth}{!}{%
    	\begin{tikzpicture}[inner sep=0pt,outer sep=0pt]%
    		\fill[gray!40!white] (0.6,0) rectangle (2.4,3.0);
    		\fill[gray!40!white] (0,0.6) rectangle (3.0,2.4);
    		\fill[blue!40!white] (0.6,1.2) rectangle (2.4,1.8);
    		\fill[blue!40!white] (1.2,0.6) rectangle (1.8,2.4);
    		\fill[red!40!white] (1.2,1.2) rectangle (1.8,1.8);
    		\node at (1.5, 1.5) {$\alpha_2$};
    		\node at (1.5, 2.1) {$\alpha_3$};
    		\node at (2.1, 2.1) {$\alpha_4$};
    		\node at (0.3, 0.3) {$0$};
    		\node at (0.3, 2.7) {$0$};
    		\node at (2.7, 0.3) {$0$};
    		\node at (2.7, 2.7) {$0$};
    		\draw[step=0.6cm,black,very thin] (0,0) grid (3.0,3.0);
    	\end{tikzpicture}%
        }%
    	\caption{Kernel structure}
		\label{fig:numerical:deblurring-param}
	\end{subfigure}%
    \hfill%
	\begin{subfigure}[t]{0.24\textwidth}%
		\centering
		\includegraphics[width=\textwidth]{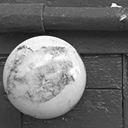}
		\caption{Original}
		\label{original-blur}
	\end{subfigure}%
    \hfill%
    \begin{subfigure}[t]{0.24\textwidth}
    	\centering
    	\includegraphics[width=\textwidth]{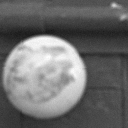}
    	\caption{Blurry; error $9.8\%$}
    	\label{blurred-noisy}
    \end{subfigure}%
    \hfill%
    \begin{subfigure}[t]{0.24\textwidth}
    	\centering
    	\includegraphics[width=\textwidth]{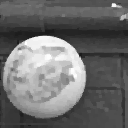}
    	\caption{FIFB; error $7.2\%$}
    	\label{FIFB-blur}
    \end{subfigure}%
    \caption{Deconvolution kernel parametrisation, data, and result for FIFB ($N=128$).}
    \label{fig:deblurring}
\end{figure}

For denoising, and deconvolution assuming $\ker D \isect \ker K(\alpha) = \{0\}$, it is not difficult to verify the structural parts of \cref{ass:FEFB:main,ass:FIFB:main}, required for the convergence results of \cref{sec:convergence}.
We do not attempt to verify the conditions on the step lengths, choosing them by trial and error.

\subsection{An implicit baseline method}

We evaluate \cref{alg:FEFB,alg:FIFB} against a conventional method that solves both the inner problem and the adjoint equation (near-)exactly, as well as the AID \cite{ji2021bilevel}.
We also experimented with solving the equivalent constrained optimisation problem $\min_{\alpha, u} J(u)$ subject to $\grad_u F(u; \alpha)=0$ with IPOPT \cite{wacher2002interior} and the NL-PDPS \cite{tuomov-nlpdhgm-redo,tuomov-nlpdhgm}. However, we did not observe convergence without the inclusion of additional $H^1$ regularisation in the inner problem, as in, e.g., \cite{delosreyes2014learning}. Since that changes the problem, we have not included “whole problem” approaches in our comparison.

To solve the inner problem in the implicit baseline method, we use gradient descent, starting at $v^0=0$ and updating $v^{m+1} \defeq v^m - \tau_m \grad F(v^m; \alpha^k)$ We then set $u^{k+1}=v^{m+1}$. 
The adjoint and outer iterate updates are as in \cref{alg:FEFB}, however, we discover $\sigma=\sigma_k$ by the line search rule \cite[(12.41)]{clason2020introduction} for nonsmooth problems, starting at $\sigma_k=5\cdot 10^{-5}$ and multiplying by $0.1$ on each line search step.
For deconvolution we use a fixed step length parameter, as it performed better.
The specific parameter choices (step lengths, number of inner and adjoint iterations) for all algorithms and experiments are listed in \cref{tab:setup}.

\begin{table}[t]
    \caption{Algorithm parametrisation (step length parameters, inner and adjoint iteration counts), time multiplier, and outer steps taken to reach threshold computational resources (CPU time) value.
    The threshold is 6000 for denoising, and 15000 for deconvolution. The time multipliers allows conversion from computational resources to real time.
    It differs between algorithms and problem dimensions due to different levels of parallelisability.
    The 'inner steps' and 'adjoint steps' columns indicate the (maximum) number of iterations taken towards a solution of the inner problem or the adjoint equation on every outer iteration.
    The \texttt{bicgstab} method for solving the adjoint for the implicit method and the FEFB may use a smaller number of iterations as determined by the threshold $10^{-5}$.
    “LS” for the step length parameter $\sigma$ means that line search was used.}
    \label{tab:setup}
	\centering
	\begin{NiceTabular}{clrrrrrrrr}
		 &   &        & outer & inner & adjoint & time &     &          &           \\
		 & N & method & steps & steps & steps & mult. & $\tau$ & $\theta$ & $\sigma$  \\
		\toprule
		\Block{8-1}{\rotate denoising}
		 & 128 & implicit & 15 & $2.5\cdot10^4$ & $10^3$ & $0.22$ & $5\cdot 10^{-4}$ & - & LS  \\
		 &  & AID  & $7.7\cdot 10^3$ & $10$ & $50$ & $0.12$ & $5\cdot 10^{-4}$ & - & $2\cdot 10^{-8}$ \\
		 &  & FEFB & $3.9\cdot 10^3$ & $1$ & $10^3$ & $0.11$ &  $8\cdot 10^{-4}$ & - & $5\cdot 10^{-9}$  \\
		 &  & FIFB & $5.1\cdot 10^5$ & $1$ & $1$ & $0.22$ & $5\cdot 10^{-4}$ & $1\cdot 10^{-5}$ & $5\cdot 10^{-10}$ \\
		 & 256 & implicit & 12 & $2.5\cdot10^4$ & $10^3$ & $0.40$ & $5\cdot 10^{-4}$ & - & LS  \\
		 &  & AID  & $3.9\cdot 10^3$ & $10$ & $50$ & $0.21$ & $5\cdot 10^{-4}$ & - & $1\cdot 10^{-8}$ \\
		 & & FEFB & $2.3\cdot 10^3$ & $1$ & $10^3$ & $0.16$ &  $8\cdot 10^{-4}$ & - & $2\cdot 10^{-9}$  \\
		 & & FIFB & $2.3\cdot 10^5$ & $1$ & $1$ & $0.35$ &  $5\cdot 10^{-4}$  & $7.5\cdot 10^{-6}$ & $5\cdot 10^{-10}$  \\
		 \midrule
		 \Block{8-1}{\rotate deconvolution}
		 & 128 & implicit & $29$ & $1\cdot10^5$ & $10^3$ & $0.21$ &  $5\cdot 10^{-4}$ & - & $5\cdot 10^{-5}$   \\
		 &  & AID  & $11\cdot 10^3$ & $50$ & $50$ & $0.14$ & $5\cdot 10^{-4}$ & - & $2.5\cdot 10^{-6}$ \\
		 &  & FEFB & $4.9\cdot 10^2$ & $1$ & $10^3$ & $0.15$ &  $5\cdot 10^{-4}$ & - & $1\cdot 10^{-6}$  \\
		 &  & FIFB & $5.2\cdot 10^5$ & $1$ & $1$ & $0.22$ &  $5\cdot 10^{-4}$ & $1\cdot 10^{-4}$ & $2.5\cdot 10^{-7}$  \\
		 & 32 & implicit & $8.7\cdot 10^2$ & $5\cdot10^4$ & $10^3$ & $0.93$ & $5\cdot 10^{-4}$ & - & $5\cdot 10^{-6}$  \\
		 &  & AID  & $9.2\cdot 10^5$ & $10$ & $50$ & $1.00$ & $5\cdot 10^{-4}$ & - & $1\cdot 10^{-7}$ \\
		 & & FEFB & $11\cdot 10^3$ & $1$ & $10^3$ & $0.19$ &  $5\cdot 10^{-4}$ & - & $1\cdot 10^{-6}$ \\
		 & & FIFB & $4.4\cdot 10^6$ & $1$ & $1$ & $0.65$ &  $5\cdot 10^{-4}$ & $5\cdot 10^{-5}$ & $7\cdot 10^{-8}$  \\
		\bottomrule
	\end{NiceTabular}
\end{table}

\subsection{Numerical setup}

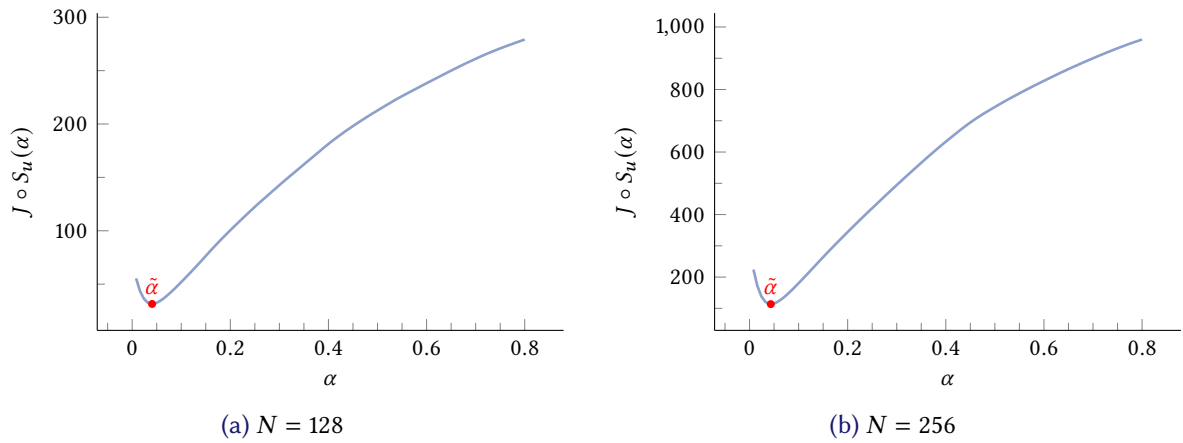
\begin{figure}[t]
	\centering
	\begin{subfigure}[b]{0.49\textwidth}
	\centering
	\begin{tikzpicture}
		\begin{axis}[%
			width=\linewidth,
			height=0.75\linewidth,
			xlabel near ticks,
			ylabel near ticks,
			scaled y ticks=false,
			yminorticks=true,
			minor y tick num=1,
			xminorticks=true,
			minor x tick num=3,
			axis x line*=bottom,
			axis y line*=left,
			xlabel={$\alpha$},
			ylabel={$J\circ S_u(\alpha)$},
			y tick label style={/pgf/number format/fixed},
			outer sep=0pt,
			font=\footnotesize,
			]
			\addplot [color=Set2-C, line width=1pt] table[x=alpha,y=Js]{data/denoising/JsGrid128.txt};
            \fill[color=red] (axis cs:0.04058, 31.4627) circle(1.5pt) node [above] {$\tilde\alpha$};
		\end{axis}
	\end{tikzpicture}
	\caption{$N=128$}
\end{subfigure}%
\hfill%
\begin{subfigure}[b]{0.49\textwidth}
	\centering
	\begin{tikzpicture}
		\begin{axis}[%
			width=\linewidth,
			height=0.75\linewidth,
			xlabel near ticks,
			ylabel near ticks,
			scaled y ticks=false,
			yminorticks=true,
			minor y tick num=1,
			xminorticks=true,
			minor x tick num=3,
			axis x line*=bottom,
			axis y line*=left,		
			xlabel={$\alpha$},
			ylabel={$J\circ S_u(\alpha)$},
			y tick label style={/pgf/number format/fixed},
			outer sep=0pt,
			font=\footnotesize,
			]
			\addplot [color=Set2-C, line width=1pt] table[x=alpha,y=Js]{data/denoising/JsGrid256.txt};
            \fill[color=red] (axis cs:0.04336, 113.5073) circle(1.5pt) node [above] {$\tilde\alpha$};
\end{axis}
\end{tikzpicture}
\caption{$N=256$}
\end{subfigure}%
	\caption{Graph of composed objective the denoising problem (both problem sizes), along with near-optimal $\tilde\alpha$ found by recursive subdivision.}
	\label{fig:numerical:J_grid128}
\end{figure}

Our algorithm implementations are available on Zenodo \cite{suonpera2023codes}.
To solve the adjoint equation in the FEFB and implicit methods, we use Matlab's \verb|bicgstab| implementation of the stabilized biconjugate gradient method \cite{van1992bi} with tolerance $10^{-5}$, and maximum iteration count $10^{3}$.
With the AID we use 50 conjugate gradient iterations.
These choices, as well as the choice of the number of inner iterations for the implicit method and the AID, have been made by trial and error to be as small as possible while obtaining an apparently stable algorithm.

To evaluate scalability, we consider for denoising both $N=128$ and $N=256$.
For deconvolution we consider $N=128$ and $N=32.$ 
We take initial $u^0 = S_u(\alpha^0)$ and $p^0 = S_p(u^0, \alpha^0)$ where for denoising $\alpha^0=0$ and for deconvolution  $\alpha^{0}=[0.4, 0.25, 0.25, 0.5]$ and $\alpha^{0}=[0.04, 0.25, 0.25, 0.5]$ with $N=128$ and $N=32$ respectively.

To compare algorithm performance, we plot relative errors against the \verb!cputime! value of Matlab on an AMD Ryzen 5 5600H CPU. We call this value “computational resources”, as it takes into account the use of several CPU cores by Matlab's internal linear algebra, making it fairer than the actual running time.
For each algorithm and problem, we indicate in \cref{tab:setup} the step length parameters, the number of outer steps to reach the computational resources value of 6000 for denoising 15000 for deconvolution , and an average multiplier to convert computational resources into running times.

For performance comparison, we need estimates $\tilde\alpha$ and $\tilde u$ of optimal $\hat\alpha$ and $\hat u=S_u(\hat\alpha)$.
For denoising we find them by searching for the one-dimensional variable $\alpha$ on a regular grid and recursively subdividing until node spacing goes below $10^{-5}$.
As $\tilde u$, we take an estimate of $S_u(\tilde\alpha)$ obtained with 25000 steps of the implicit base line method.
We visualise so obtained $\tilde\alpha$ and $J \circ S_u$ in \cref{fig:numerical:J_grid128}.
For the higher-dimensional deconvolution problem, such a scan is not feasible. Instead, we obtain the comparison estimates by running the implicit method from a very good initial iterate until computational resources (CPU time) value of 6000 for $N=32$ and 10000 for $N=128$. Specifically, we initialise the kernel parameters $(\alpha_2,\alpha_3,\alpha_4)$ as those used for generating the data $z$, and the regularization parameter $\alpha_1 = 0.045$ for $N=32$ and $\alpha_1=0.02$ for $N=128$, the latter found by trial and error.
This initialisation is different from that used for the actual numerical experiments; see above.
Our experiments indicate that the other methods approach $\tilde\alpha$ so obtained faster than the implicit method itself, providing some justification for the choice.

With these solution estimates we define the inner and outer relative errors

\[
    e_{\alpha,\text{rel}} \defeq \tfrac{\norm{\tilde\alpha - \alpha^k}}{\norm{\tilde\alpha}}
    \quad\text{and}\quad
    e_{u,\text{rel}} \defeq \tfrac{\norm{\tilde u - u^k}}{\norm{\tilde u}}.
\]

\subsection{Results}

\pgfplotsset{every axis legend/.code={\renewcommand\addlegendentry[2][]{}}}

\begin{figure}[t]
	\centering
	\begin{subfigure}[b]{0.45\textwidth}
	\centering
	\begin{tikzpicture}
		\begin{axis}[%
			width=\linewidth,
			height=0.75\linewidth,
			ymode=log,
			xlabel near ticks,
			ylabel near ticks,
			scaled y ticks=false,
			yminorticks=true,
			minor y tick num=1,
			xminorticks=true,
			minor x tick num=3,
			axis x line*=bottom,
			axis y line*=left,
			legend columns=4,
			legend style={at={(0.5, 1.0)}, anchor=south,inner sep=0pt,outer sep=0pt,legend cell align=left,align=left,draw=none,fill=none,font=\footnotesize},
			xlabel={computational resources},
			ylabel={$e_{\alpha,\text{rel}}$},
			y tick label style={/pgf/number format/fixed},
			outer sep=0pt,
			font=\footnotesize,
			]
			
			\addplot [color=Set2-A, line width=1pt] table[x=cputime,y=alphaDiff]{data/denoising/bilevelDenoiseFIFB128.txt};
            \label{plot:FIFB}
			\addlegendentry{FIFB}
			
			\addplot [color=Set2-B, line width=1pt] table[x=cputime,y=alphaDiff]{data/denoising/bilevelDenoiseFEFB128.txt};
            \label{plot:FEFB}
			\addlegendentry{FEFB}
			
			\addplot [color=Set2-D, line width=1pt] table[x=cputime,y=alphaDiff]{data/denoising/bilevelDenoiseAID128.txt};
            \label{plot:AID}
			\addlegendentry{AID}
			
			\addplot [color=Set2-C, line width=1pt] table[x=cputime,y=alphaDiff]{data/denoising/bilevelDenoiseImplicit128.txt};
            \label{plot:implicit}
			\addlegendentry{implicit}
		\end{axis}
	\end{tikzpicture}
	\caption{Outer problem, $N=128$}
	\label{fig:numerical:denoising:outer}
\end{subfigure}%
\hfill%
\begin{subfigure}[b]{0.45\textwidth}
	\centering
	\begin{tikzpicture}
		\begin{axis}[%
			width=\linewidth,
			height=0.75\linewidth,
			ymode=log,
			xlabel near ticks,
			ylabel near ticks,
			scaled y ticks=false,
			yminorticks=true,
			minor y tick num=1,
			xminorticks=true,
			minor x tick num=3,
			axis x line*=bottom,
			axis y line*=left,
			legend columns=4,
			legend style={at={(0.5, 1.0)}, anchor=south,inner sep=0pt,outer sep=0pt,legend cell align=left,align=left,draw=none,fill=none,font=\footnotesize},
			xlabel={computational resources},
			ylabel={$e_{u,\text{rel}}$},
			y tick label style={/pgf/number format/fixed},
			outer sep=0pt,
			font=\footnotesize,
			]
			
			\addplot [color=Set2-A, line width=1pt] table[x=cputime,y=uDiff]{data/denoising/bilevelDenoiseFIFB128.txt};
			\addlegendentry{FIFB}
			
			\addplot [color=Set2-B, line width=1pt] table[x=cputime,y=uDiff]{data/denoising/bilevelDenoiseFEFB128.txt};
			\addlegendentry{FEFB}
			
			\addplot [color=Set2-D, line width=1pt] table[x=cputime,y=uDiff]{data/denoising/bilevelDenoiseAID128.txt};
			\addlegendentry{AID}
			
			\addplot [color=Set2-C, line width=1pt] table[x=cputime,y=uDiff]{data/denoising/bilevelDenoiseImplicit128.txt};
			\addlegendentry{implicit}
		\end{axis}
	\end{tikzpicture}
	\caption{Inner problem, $N=128$}
	\label{fig:numerical:denoising:inner}
\end{subfigure}%

\medskip

\begin{subfigure}[b]{0.45\textwidth}
	\centering
	\begin{tikzpicture}
		\begin{axis}[%
			width=\linewidth,
			height=0.75\linewidth,
			ymode=log,
			xlabel near ticks,
			ylabel near ticks,
			scaled y ticks=false,
			yminorticks=true,
			minor y tick num=1,
			xminorticks=true,
			minor x tick num=3,
			axis x line*=bottom,
			axis y line*=left,
			legend columns=4,
			legend style={at={(0.5, 1.0)}, anchor=south,inner sep=0pt,outer sep=0pt,legend cell align=left,align=left,draw=none,fill=none,font=\footnotesize},
			xlabel={computational resources},
			ylabel={$e_{\alpha,\text{rel}}$},
			y tick label style={/pgf/number format/fixed},
			outer sep=0pt,
			font=\footnotesize,
			]
			
			\addplot [color=Set2-A, line width=1pt] table[x=cputime,y=alphaDiff]{data/denoising/bilevelDenoiseFIFB256.txt};
			\addlegendentry{FIFB}
			
			\addplot [color=Set2-B, line width=1pt] table[x=cputime,y=alphaDiff]{data/denoising/bilevelDenoiseFEFB256.txt};
			\addlegendentry{FEFB}
			
			\addplot [color=Set2-D, line width=1pt] table[x=cputime,y=alphaDiff]{data/denoising/bilevelDenoiseAID256.txt};
			\addlegendentry{AID}
			
			\addplot [color=Set2-C, line width=1pt] table[x=cputime,y=alphaDiff]{data/denoising/bilevelDenoiseImplicit256.txt};
			\addlegendentry{implicit}
		\end{axis}
	\end{tikzpicture}
	\caption{Outer problem, $N=256$}
	\label{fig:numerical:denoising:outer256}
\end{subfigure}%
\hfill%
\begin{subfigure}[b]{0.45\textwidth}
	\centering
	\begin{tikzpicture}
		\begin{axis}[%
			width=\linewidth,
			height=0.75\linewidth,
			ymode=log,
			xlabel near ticks,
			ylabel near ticks,
			scaled y ticks=false,
			yminorticks=true,
			minor y tick num=1,
			xminorticks=true,
			minor x tick num=3,
			axis x line*=bottom,
			axis y line*=left,
			legend columns=4,
			legend style={at={(0.5, 1.0)}, anchor=south,inner sep=0pt,outer sep=0pt,legend cell align=left,align=left,draw=none,fill=none,font=\footnotesize},
			xlabel={computational resources},
			ylabel={$e_{u,\text{rel}}$},
			y tick label style={/pgf/number format/fixed},
			outer sep=0pt,
			font=\footnotesize,
			]
	
		\addplot [color=Set2-A, line width=1pt] table[x=cputime,y=uDiff]{data/denoising/bilevelDenoiseFIFB256.txt};
		\addlegendentry{FIFB}
	
		\addplot [color=Set2-B, line width=1pt] table[x=cputime,y=uDiff]{data/denoising/bilevelDenoiseFEFB256.txt};
		\addlegendentry{FEFB}
		
		\addplot [color=Set2-D, line width=1pt] table[x=cputime,y=uDiff]{data/denoising/bilevelDenoiseAID256.txt};
		\addlegendentry{AID}
			
		\addplot [color=Set2-C, line width=1pt] table[x=cputime,y=uDiff]{data/denoising/bilevelDenoiseImplicit256.txt};
		\addlegendentry{implicit}
\end{axis}
\end{tikzpicture}
\caption{Inner problem, $N=256$}
\label{fig:numerical:denoising:inner256}
\end{subfigure}%
	\caption{Denoising performance.
    The graphs correspond to the
    FIFB (\ref*{plot:FIFB}),
    FEFB (\ref*{plot:FEFB}),
    AID (\ref*{plot:AID}), and
    implicit method (\ref*{plot:implicit}).
	}
	\label{fig:numerical:denoising}
\end{figure}

\begin{figure}[t]
	\centering
	\begin{subfigure}[b]{0.45\textwidth}
	\begin{tikzpicture}
		\begin{axis}[%
			width=\linewidth,
			height=0.75\linewidth,
			xlabel near ticks,
			ylabel near ticks,
			scaled y ticks=false,
			yminorticks=true,
			minor y tick num=1,
			xminorticks=true,
			minor x tick num=3,
			axis x line*=bottom,
			axis y line*=left,
 			legend columns=4,
 			legend style={at={(0.5, 1.0)}, anchor=south,inner sep=0pt,outer sep=0pt,legend cell align=left,align=left,draw=none,fill=none,font=\footnotesize},
			xlabel={computational resources},
			ylabel={$e_{\alpha,\text{rel}}$},
			y tick label style={/pgf/number format/fixed},
			outer sep=0pt,
			font=\footnotesize,
			]
						
			\addplot [color=Set2-A, line width=1pt] table[x=cputime,y=alphaDiff]{data/deblurring/bilevelDeblurFIFB32.txt};
            \label{plot:dc:FIFB}
			\addlegendentry{FIFB}
			
			\addplot [color=Set2-B, line width=1pt] table[x=cputime,y=alphaDiff]{data/deblurring/bilevelDeblurFEFB32.txt};
            \label{plot:dc:FEFB}
			\addlegendentry{FEFB}
			
			\addplot [color=Set2-D, line width=1pt] table[x=cputime,y=alphaDiff]{data/deblurring/bilevelDeblurAID32.txt};
            \label{plot:dc:AID}
			\addlegendentry{AID}
			
			\addplot [color=Set2-C, line width=0.7pt] table[x=cputime,y=alphaDiff]{data/deblurring/bilevelDeblurImplicit32.txt};
            \label{plot:dc:implicit}
			\addlegendentry{implicit}
		\end{axis}
	\end{tikzpicture}
	\caption{Outer problem, $N=32$}
	\label{fig:numerical:outer-deblur32}
\end{subfigure}%
\hfill%
\begin{subfigure}[b]{0.45\textwidth}
	\begin{tikzpicture}
		\begin{axis}[%
			width=\linewidth,
			height=0.75\linewidth,
			xlabel near ticks,
			ylabel near ticks,
			scaled y ticks=false,
			yminorticks=true,
			minor y tick num=1,
			xminorticks=true,
			minor x tick num=3,
			axis x line*=bottom,
			axis y line*=left,
 			legend columns=4,
 			legend style={at={(0.5, 1.0)}, anchor=south,inner sep=0pt,outer sep=0pt,legend cell align=left,align=left,draw=none,fill=none,font=\footnotesize},
			xlabel={computational resources},
			ylabel={$e_{u,\text{rel}}$},
			y tick label style={/pgf/number format/.cd,fixed,precision=3},
            ylabel shift=-1ex,
			outer sep=0pt,
			font=\footnotesize,
			]
			
			\addplot [color=Set2-A, line width=1pt] table[x=cputime,y=uDiff]{data/deblurring/bilevelDeblurFIFB32.txt};
			\addlegendentry{FIFB}
			
			\addplot [color=Set2-A, dashed, line width=0.7pt] table[x=cputime,y=SuDiff]{data/deblurring/bilevelDeblurFIFB32Su.txt};
			\label{plot:dc:FIFB-S_u}
			
			\addplot [color=Set2-B, line width=1pt] table[x=cputime,y=uDiff]{data/deblurring/bilevelDeblurFEFB32.txt};
			\addlegendentry{FEFB}
			
			\addplot [color=Set2-D, line width=1pt] table[x=cputime,y=uDiff]{data/deblurring/bilevelDeblurAID32.txt};
			\addlegendentry{AID}
			
			\addplot [color=Set2-C, line width=1pt] table[x=cputime,y=uDiff]{data/deblurring/bilevelDeblurImplicit32.txt};
			\addlegendentry{implicit}
		\end{axis}
	\end{tikzpicture}
	\caption{Inner problem, $N=32$}
	\label{fig:numerical:inner-deblur32}
\end{subfigure}%

\medskip

\begin{subfigure}[b]{0.45\textwidth}
	\begin{tikzpicture}
		\begin{axis}[%
			width=\linewidth,
			height=0.75\linewidth,
			xlabel near ticks,
			ylabel near ticks,
			scaled y ticks=false,
			yminorticks=true,
			minor y tick num=1,
			xminorticks=true,
			minor x tick num=3,
			axis x line*=bottom,
			axis y line*=left,
			legend columns=4,
			legend style={at={(0.5, 1.0)}, anchor=south,inner sep=0pt,outer sep=0pt,legend cell align=left,align=left,draw=none,fill=none,font=\footnotesize},
			xlabel={computational resources},
			ylabel={$e_{\alpha,\text{rel}}$},
			y tick label style={/pgf/number format/fixed},
			outer sep=0pt,
			font=\footnotesize,
			]
			
			\addplot [color=Set2-A, line width=1pt] table[x=cputime,y=alphaDiff]{data/deblurring/bilevelDeblurFIFB128.txt};
			\addlegendentry{FIFB}
			
			\addplot [color=Set2-B, line width=1pt] table[x=cputime,y=alphaDiff]{data/deblurring/bilevelDeblurFEFB128.txt};
			\addlegendentry{FEFB}
			
			\addplot [color=Set2-D, line width=1pt] table[x=cputime,y=alphaDiff]{data/deblurring/bilevelDeblurAID128.txt};
			\addlegendentry{AID}
			
			\addplot [color=Set2-C, line width=1pt] table[x=cputime,y=alphaDiff]{data/deblurring/bilevelDeblurImplicit128.txt};
			\addlegendentry{implicit}
		\end{axis}
	\end{tikzpicture}
	\caption{Outer problem, $N=128$}
	\label{fig:numerical:outer-deblur128}
\end{subfigure}%
\hfill%
\begin{subfigure}[b]{0.45\textwidth}
	\begin{tikzpicture}
		\begin{axis}[%
			width=\linewidth,
			height=0.75\linewidth,
			xlabel near ticks,
			ylabel near ticks,
			scaled y ticks=false,
			yminorticks=true,
			minor y tick num=1,
			xminorticks=true,
			minor x tick num=3,
			axis x line*=bottom,
			axis y line*=left,
			legend columns=4,
			legend style={at={(0.5, 1.0)}, anchor=south,inner sep=0pt,outer sep=0pt,legend cell align=left,align=left,draw=none,fill=none,font=\footnotesize},
			xlabel={computational resources},
			ylabel={$e_{u,\text{rel}}$},
			y tick label style={/pgf/number format/fixed},
			outer sep=0pt,
			font=\footnotesize,
			]
			
			\addplot [color=Set2-A, line width=1pt] table[x=cputime,y=uDiff]{data/deblurring/bilevelDeblurFIFB128.txt};
			\addlegendentry{FIFB}
			
			\addplot [color=Set2-A, dashed, line width=0.7pt] table[x=cputime,y=SuDiff]{data/deblurring/bilevelDeblurFIFB128Su.txt};
			
			\addplot [color=Set2-B, line width=1pt] table[x=cputime,y=uDiff]{data/deblurring/bilevelDeblurFEFB128.txt};
			\addlegendentry{FEFB}
			
			\addplot [color=Set2-D, line width=1pt] table[x=cputime,y=uDiff]{data/deblurring/bilevelDeblurAID128.txt};
			\addlegendentry{AID}
			
			\addplot [color=Set2-C, line width=1pt] table[x=cputime,y=uDiff]{data/deblurring/bilevelDeblurImplicit128.txt};
			\addlegendentry{implicit}
		\end{axis}
	\end{tikzpicture}
	\caption{Inner problem, $N=128$}
	\label{fig:numerical:inner-deblur128}
\end{subfigure}%
	\caption{Deconvolution performance.
    The graphs correspond to the
    FIFB (\ref*{plot:dc:FIFB}),
    FEFB (\ref*{plot:dc:FEFB}),
    AID (\ref*{plot:dc:AID}), and
    implicit method (\ref*{plot:dc:implicit}). For the inner problem, we additionally plot the relative error of $S_u(\alpha^k)$ for the FIFB (\ref*{plot:dc:FIFB-S_u}).
	} 
	\label{fig:numerical:deblurring}
\end{figure}
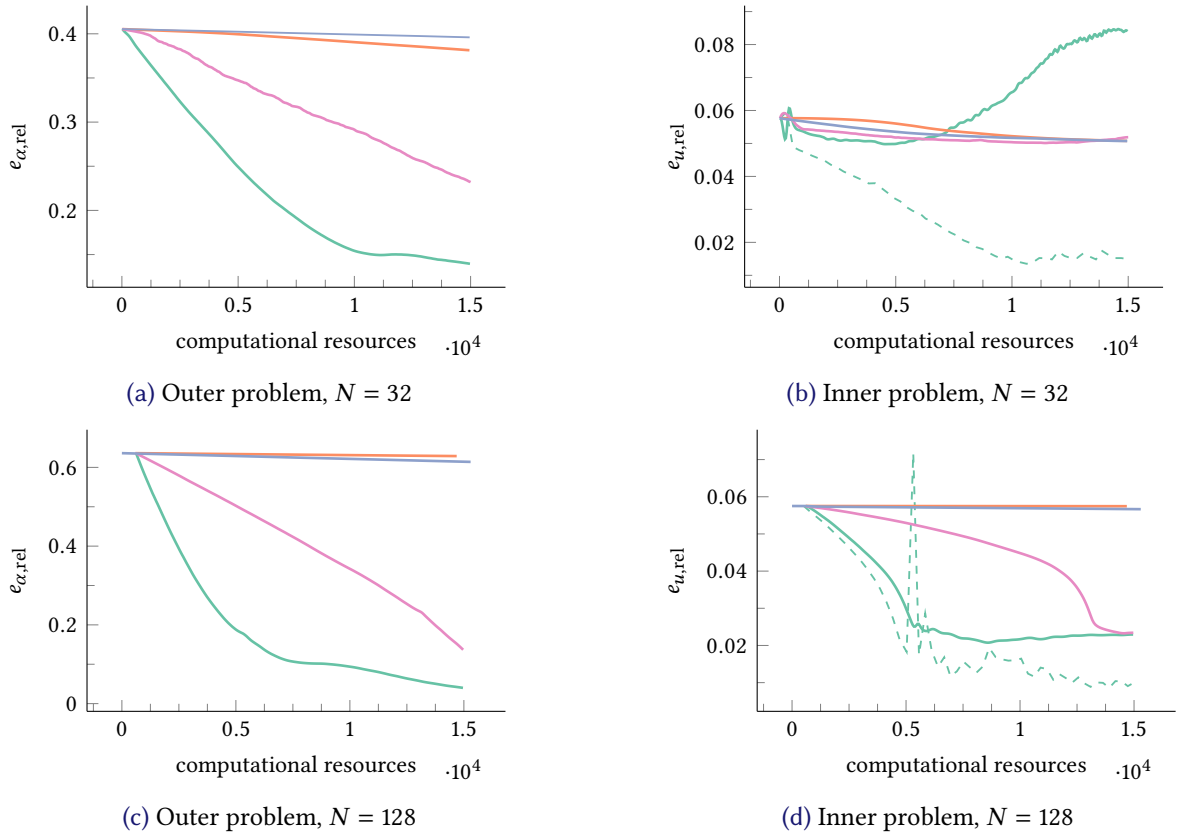

We report performance in \cref{fig:numerical:denoising,fig:numerical:deblurring} and the image data and reconstructions in \cref{fig:denoising,fig:deblurring}.
\Cref{fig:numerical:deblurring} indicates that for deconvolution the FIFB significantly outperforms the other methods.
The outer variable converges much faster than for other evaluated methods despite the fact that the inner variable especially with $N=32$ stays some distance away from $\tilde u$. However, as the  dashed line indicates, the exact solution $S_u(\alpha^k)$ of the inner problem for the corresponding outer iterate, shows clear signs of convergence. (The few “spikes” in the graph for $N=128$ temporarily have the regularisation parameter $\alpha^k_0$ much closer to zero than $\tilde\alpha_0$.) This observation justifies the intuition that the inner problem does not need to be solved to high accuracy to obtain convergence for the outer problem; that such accurate solutions can even be detrimental to convergence.
The exact solution of the adjoint equation in both the implicit method and the FEFB causes them to be too slow to make any meaningful progress.
The denoising experiments of \cref{fig:numerical:denoising} likewise suggest that the FIFB is \emph{initially} the best performing algorithm, although the implicit method and the AID catch up later on the denoising problem.
On the small denoising problem ($N=128$), the implicit method is significantly faster than any oher method.
Overall, and for practical purposes, nevertheless, the FIFB appears to perform the best.

\appendix

\section{Prox-$\sigma$-contractivity}
\label{sec:prox-contractivity}

In the next three theorems we verify prox-$\sigma$-contractivity (\cref{ass:contractivity}) for some common cases.

The next theorem readily extends to $R=\beta\norm{\freevar}_1 + \delta_{[0, \infty)^n}$ on $\R^n$ and products sets $A$ since the proximal mapping of $R$ is independent for each coordinate.

\begin{theorem}[prox-$\sigma$-contractivity of positivity-constrained soft-thresholding]
	\label{thm:contractivity-soft-thresholding}
    Let $\sigma,\beta>0$ and $R=\beta\abs{\freevar}_1 + \delta_{[0, \infty)}$ on $\R$.
    Then $R$ is prox-$\sigma$-contractive at any $\opt\alpha \ge \max\{0, \sigma(q+\beta)\}$ for any $q \in \R^n$ with any factor $0 < C_R < \inv\sigma$ within
    \[
        A
        =
        \left[
            \max\left\{
                0,
                \opt\alpha - \frac{\opt\alpha - \sigma(q + \beta)}{1-\sigma C_R}
            \right\},
            \infty
        \right).
    \]
    In particular, if $\opt\alpha \in \Dom R=[0, \infty)$ and $-q = \beta \in \subdiff R(\opt\alpha)$, then $R$ is locally prox-$\sigma$-contractive at $\opt\alpha$ with any factor $0 < C_R < \inv\sigma$ within
    \[
        A
        =
        \left[
            0,
            \infty
        \right).
    \]
\end{theorem}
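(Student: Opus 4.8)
The plan is to reduce the statement to a closed form for $\prox_{\sigma R}$ and then to elementary one-dimensional estimates. First I would compute, for $x \in \R$,
\[
    \prox_{\sigma R}(x) = \argmin_{z \ge 0} \tfrac12(z-x)^2 + \sigma\beta z = \max\{0,\, x - \sigma\beta\},
\]
the positivity-constrained soft-thresholding operator: the $\delta_{[0,\infty)}$ term restricts the minimisation to $z \ge 0$, on which $\abs{z} = z$, and the resulting quadratic is minimised at $\max\{0, x - \sigma\beta\}$. Writing $c \defeq \sigma(q+\beta)$, this immediately gives
\[
    D_{\sigma R}(\alpha) = \prox_{\sigma R}(\alpha - \sigma q) - \alpha = \max\{0,\, \alpha - c\} - \alpha = -\min\{\alpha,\, c\}.
\]

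Next I would treat the first assertion. For $\opt\alpha \ge \max\{0, c\}$ one has $\min\{\opt\alpha, c\} = c$, so $D_{\sigma R}(\opt\alpha) = -c$ and hence $\abs{D_{\sigma R}(\alpha) - D_{\sigma R}(\opt\alpha)} = \abs{c - \min\{\alpha,c\}} = \max\{0,\, c - \alpha\}$. The case $\alpha \ge c$ is trivial since the left-hand side then vanishes, so the only work is the case $\alpha < c \le \opt\alpha$: there the required estimate $c - \alpha \le \sigma C_R \abs{\alpha - \opt\alpha} = \sigma C_R(\opt\alpha - \alpha)$ is, using $1 - \sigma C_R > 0$, rearranged into $\alpha \ge \frac{c - \sigma C_R\opt\alpha}{1-\sigma C_R} = \opt\alpha - \frac{\opt\alpha - c}{1-\sigma C_R}$, which is exactly the condition $\alpha \in A$. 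I would finish this part by checking that $\opt\alpha$ itself lies in $A$ — immediate from $\opt\alpha \ge c$ — so that $A$ is a neighbourhood of $\opt\alpha$ in $\Dom R$.

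For the second assertion I would simply specialise: $-q = \beta$ forces $c = \sigma(q+\beta) = 0$, so $\max\{0,c\} = 0 \le \opt\alpha$ for any $\opt\alpha \in \Dom R$, while the left endpoint of $A$ collapses, since $1 - (1-\sigma C_R)^{-1} = -\sigma C_R/(1-\sigma C_R) < 0$, to $\max\{0,\, \opt\alpha(1 - (1-\sigma C_R)^{-1})\} = 0$. Thus $A = [0,\infty) = \Dom R$, a neighbourhood of every point of $\Dom R$, and the first part applies. Along the way I would note that $-q = \beta \in \subdiff R(\opt\alpha)$ holds automatically ($\subdiff R(\opt\alpha) = \{\beta\}$ for $\opt\alpha > 0$ and $\subdiff R(0) = (-\infty,\beta]$), and that correspondingly $D_{\sigma R}(\opt\alpha) = -\min\{\opt\alpha,0\} = 0$, consistent with the remark after \cref{ass:contractivity}.

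I do not expect a genuine conceptual obstacle, because in one dimension the proximal map is available in closed form. The steps that demand care are keeping the inequality direction correct when dividing by $1 - \sigma C_R > 0$, and verifying that the prescribed set $A$ really is a neighbourhood of $\opt\alpha$ rather than merely a set containing it; the borderline configuration is $\opt\alpha = c > 0$, for which $\opt\alpha$ is the left endpoint of $A$.
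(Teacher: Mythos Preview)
Your proposal is correct and follows essentially the same route as the paper's proof: compute the closed-form proximal map, split into the cases $\alpha \ge \sigma(q+\beta)$ and $\alpha < \sigma(q+\beta)$, and rearrange the required inequality in the nontrivial case using $1-\sigma C_R>0$. Your use of the shorthand $c=\sigma(q+\beta)$ and the identity $D_{\sigma R}(\alpha)=-\min\{\alpha,c\}$ is a mild notational streamlining of the paper's piecewise formula, and your explicit check that $\opt\alpha\in A$ (and the remark on the borderline $\opt\alpha=c>0$) goes slightly beyond what the paper writes down, but the substance of the argument is the same.
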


\begin{proof}
    We have (see, e.g., \cite{clason2020introduction})
    \[
        D_{\sigma R}(\alpha)
        =
        \prox_{\sigma R}(\alpha - \sigma q) - \alpha =
        \begin{cases}
            -\sigma(q + \beta), & \alpha \ge \sigma(q + \beta), \\
            -\alpha, & \alpha < \sigma(q + \beta).
        \end{cases}
    \]

    We have by assumption $\opt\alpha \ge \sigma(q+\beta)$.
    If also $\alpha \ge \sigma(q + \beta)$, we have $D_{\sigma R}(\alpha) - D_{\sigma R}(\opt\alpha)=0$, which satisfies the required inequality.

    Suppose then that $\alpha < \sigma(q + \beta)$.
    Since $D_{\sigma R}(\opt\alpha)=-\sigma(q+\beta)$, we need to show that
    \begin{equation}
        \label{eq:contractivity-soft-thresholding-expansion1}
        \abs{D_{\sigma R}(\alpha)+\sigma(q+\beta)} \le \sigma C_R (\opt\alpha-\alpha).
    \end{equation}
    We have $D_{\sigma R}(\alpha)=-\alpha$,
    so \eqref{eq:contractivity-soft-thresholding-expansion1} rearranges as
    \[
        -\alpha+\sigma(q+\beta) \le \sigma C_R (\opt\alpha-\alpha).
    \]
    Since $1 > \sigma C_R$, this inequality can be rearranged as the condition $\alpha \ge \opt\alpha - \frac{\opt\alpha - \sigma(q + b)}{1-\sigma C_R}$.
    Any $\alpha \in A$ satisfies this bound.

    Let then $-q = \beta \in \subdiff R(\opt\alpha)$.
    Since $\opt\alpha \ge 0$, we have $\opt\alpha \ge \sigma(q+\beta) = 0$.
    Since $\frac{\opt\alpha - \sigma(q + \beta)}{1-\sigma C_R} = \frac{\opt\alpha}{1-\sigma C_R} \ge \opt\alpha$, the claimed simpler expression for $A$ follows from the general.
\end{proof}

Similarly to the previous result, the restriction $q=0$ in the next theorem on projections to a convex set $C$ forbids stricly complementary cases of $\opt\alpha \in \bd C$, i.e., we cannot have $0 \ne -q \in N_C(\opt\alpha) \defeq \subdiff\delta_C(\opt\alpha)$.

\begin{theorem}[prox-$\sigma$-contractivity of projections]
	\label{thm:contractivity-projection}
    Let $\sigma>0$ and $R=\delta_C$ for a convex and closed $C \subset \R^n$.
    Then $R$ is prox-$\sigma$-contractive at any $\opt\alpha \in C$ for $q=0$ within $A=C$ with any factor $C_R > 0$.
\end{theorem}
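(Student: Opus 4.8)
The plan is to unwind the definitions and observe that everything collapses to the trivial inequality $0 \le 0$. First I would recall from \cref{ass:contractivity} that, with $q = 0$, the relevant displacement map is
\[
    D_{\sigma R}(\alpha) = \prox_{\sigma \delta_C}(\alpha - \sigma \cdot 0) - \alpha = \proj_C(\alpha) - \alpha,
\]
using the standard identity $\prox_{\sigma \delta_C} = \inv{(\Id + \subdiff \delta_C)} = \proj_C$ for the indicator of a nonempty closed convex set. Since $\opt\alpha \in C$, we have $\proj_C(\opt\alpha) = \opt\alpha$ and hence $D_{\sigma R}(\opt\alpha) = 0$.

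Next I would take $A = C = \Dom R$, which is a (the largest) neighborhood of $\opt\alpha$ within $\Dom R$, so it is admissible in \cref{ass:contractivity}. For any $\alpha \in A = C$ we again have $\proj_C(\alpha) = \alpha$, so $D_{\sigma R}(\alpha) = 0$. Therefore
\[
    \norm{D_{\sigma R}(\alpha) - D_{\sigma R}(\opt\alpha)} = 0 \le \sigma C_R \norm{\alpha - \opt\alpha}
\]
holds for every $\alpha \in A$ and every $C_R > 0$, which is exactly the claimed prox-$\sigma$-contractivity.

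There is essentially no obstacle here: the only point worth stating carefully is that the contractivity estimate is required only on the domain $\Dom R = C$, on which $\proj_C$ is the identity, so the comparison set $A = C$ makes the left-hand side vanish identically. (For contrast, the remark preceding the theorem about strict complementarity is a statement about what the assumption \emph{cannot} give outside this setting, and is not needed for the proof.)
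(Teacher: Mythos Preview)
Your proof is correct and follows essentially the same approach as the paper: identify $\prox_{\sigma\delta_C}$ with $\proj_C$, observe that $\proj_C$ acts as the identity on $C \ni \alpha,\opt\alpha$ (so both $D_{\sigma R}$ values vanish), and conclude the trivial inequality.
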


\begin{proof}
    We have $\prox_{\sigma R}=\proj_C$ for the Euclidean projection onto $C$.
    Since $\alpha, \opt\alpha \in C = \Dom R$ and $q=0$, we have
    $\alpha = \proj_C(\alpha) = \proj_C(\alpha - \sigma q)$,
    and likewise for $\opt\alpha$.
    The claim is now immediate.
\end{proof}

\begin{example}[ReLu]
    The proximal mapping of $\delta_{[0, \infty)}$ is known as the rectifier linear unit activation function (ReLu). By the above theorem, it is prox-$\sigma$-contractive at any $\opt\alpha \ge 0$ for $q=0$.
\end{example}

\begin{theorem}[prox-$\sigma$-contractivity of smooth functions]
	\label{thm:contractivity-smooth}
    Let $\sigma,\beta>0$ and $R: \R^n \to \R$ be convex with Lipschitz gradient.
    Then $R$ is prox-$\sigma$-contractive at any $\opt\alpha \in \R^n$ for any $q \in \R^n$ within $A=\R^n$ with the factor $C_R=L_{\grad R}$ the Lipschitz factor of $\grad R$.
\end{theorem}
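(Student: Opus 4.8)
The plan is to exploit the resolvent characterization of the proximal map, using the smoothness of $R$ to make the subdifferential single-valued, and then to combine the Lipschitz continuity of $\grad R$ with the nonexpansiveness of $\prox_{\sigma R}$. First, since $R\in C^1(\R^n)$ is convex, $\subdiff R(\alpha)=\{\grad R(\alpha)\}$, so $\prox_{\sigma R}=(\Id+\sigma\grad R)^{-1}$. Introduce $w(\alpha)\defeq\prox_{\sigma R}(\alpha-\sigma q)$; the defining relation $\alpha-\sigma q = w(\alpha)+\sigma\grad R(w(\alpha))$ rearranges to $D_{\sigma R}(\alpha)=w(\alpha)-\alpha=-\sigma q-\sigma\grad R(w(\alpha))$. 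Consequently the constant $-\sigma q$ cancels in the difference and $D_{\sigma R}(\alpha)-D_{\sigma R}(\opt\alpha)=-\sigma\bigl(\grad R(w(\alpha))-\grad R(w(\opt\alpha))\bigr)$.

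Second, I would estimate this difference using the $L_{\grad R}$-Lipschitz continuity of $\grad R$ together with the fact that $\prox_{\sigma R}$, hence $w(\freevar)=\prox_{\sigma R}(\freevar-\sigma q)$, is $1$-Lipschitz: \[ \norm{D_{\sigma R}(\alpha)-D_{\sigma R}(\opt\alpha)} = \sigma\norm{\grad R(w(\alpha))-\grad R(w(\opt\alpha))} \le \sigma L_{\grad R}\norm{w(\alpha)-w(\opt\alpha)} \le \sigma L_{\grad R}\norm{\alpha-\opt\alpha}. \] Since $\Dom R=\R^n$, the neighborhood $A$ in \cref{ass:contractivity} may be taken to be all of $\R^n$, and the displayed inequality is exactly prox-$\sigma$-contractivity at $\opt\alpha$ for $q$ with factor $C_R=L_{\grad R}$.

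There is essentially no obstacle here: the only point needing care is the step from the variational definition $\prox_{\sigma R}(x)=\argmin_z\tfrac12\norm{z-x}^2+R(z)$ to the resolvent form $(\Id+\sigma\grad R)^{-1}(x)$, which is justified precisely because differentiability of $R$ reduces $(\Id+\sigma\subdiff R)^{-1}$ to a single-valued map; after that one simply recognizes $D_{\sigma R}$ as $-\sigma q$ plus $-\sigma\grad R$ evaluated at a nonexpansive argument, and the bound is immediate. (One could alternatively sharpen the constant via firm nonexpansiveness of $\prox_{\sigma R}$, but this is unnecessary for the stated claim.)
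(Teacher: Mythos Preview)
Your proof is correct and follows essentially the same approach as the paper: write the proximal output via the optimality condition $\alpha-\sigma q = w(\alpha)+\sigma\grad R(w(\alpha))$, so that $D_{\sigma R}(\alpha)=-\sigma q-\sigma\grad R(w(\alpha))$, then cancel the constant and combine the Lipschitz bound on $\grad R$ with the $1$-Lipschitz property of the proximal map. The only cosmetic difference is notation ($w$ versus the paper's $p$, $t$).
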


\begin{proof}
    Write $p(\alpha) \defeq \prox_{\sigma R}(t(\alpha))$ and $t(\alpha) \defeq \alpha - \sigma q$.
    According to the definition of the proximal operator,
    $
        0 = \sigma \grad R(p(\alpha)) + p(\alpha) - t(\alpha).
    $
    Hence
    $
        p(\alpha)-\alpha = -\sigma[q + \grad R(p(\alpha))]
    $
    which yields for any $\alpha \in \R^n$, as required,
    \begin{align*}
        \norm{D_{\sigma R}(\alpha)-D_{\sigma R}(\opt\alpha)}
        &
        =
        \norm{[p(\alpha)-\alpha]-[p(\opt\alpha)-\opt\alpha]}
        \\
        &
        =\sigma\norm{\grad R(p(\alpha))-\grad R(p(\opt \alpha))}
        \\
        &
        \le\sigma L_{\grad R}\onorm{p(\alpha)-p(\opt \alpha)}
        \\
        &
        \le\sigma L_{\grad R}\norm{t(\alpha)-t(\opt \alpha)}
        \\
        &
        =
        \sigma L_{\grad R}\norm{\alpha-\opt \alpha}.
        \qedhere
    \end{align*}
\end{proof}

\section{A norm on operators on separable Hilbert spaces}
\label{sec:separable}

We show basic properties of the inner product defined and norm induced by \eqref{def:p-inner_prod}.

\begin{theorem}
    \label{thm:separable:properties}
    Let $U$ be a Hilbert space and $\AlphaSpace$ a separable Hilbert space.
    On $\linear(U; \AlphaSpace)$ define $\oiprod{\freevar}{\freevar}$ and $\onorm{\freevar}$ according to \eqref{def:p-inner_prod}. Then
    \begin{enumerate}[label=(\roman*),nosep]
        \item\label{item:separable:norm-iprod}
        $\oiprod{\freevar}{\freevar}$ is an inner product and $\onorm{\freevar}$ a norm on $\linear(U; \AlphaSpace)$.
        \item\label{item:separable:operator-norm-combo}
        For $M \in \linear(U; U)$ and $p \in \linear(U; \AlphaSpace)$, we have $\onorm{pM} \le \onorm{p}\norm{M}_{\linear(U; U)}$.
        \item\label{item:separable:operator-norm} The operator norm on $\linear(U; \AlphaSpace)$ satisfies $\norm{\freevar}_{\linear(U; \AlphaSpace)} \le \onorm{\freevar}$.
    \end{enumerate}
\end{theorem}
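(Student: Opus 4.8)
The plan is to recognise $\onorm{p}=\bigl(\sum_{i\in I}\norm{p^*\phi_i}_U^2\bigr)^{1/2}$ as the Hilbert--Schmidt norm of $p$ computed in the orthonormal basis $\{\phi_i\}_{i\in I}$, and to reduce all three claims, term by term in the defining series, to elementary Cauchy--Schwarz and Parseval inequalities in $U$ and $\AlphaSpace$. When $\AlphaSpace$ is finite-dimensional---the case in our applications---the series is a finite sum and $\onorm{p}<\infty$ for every $p$; in general I would first restrict to $\{p\in\linear(U;\AlphaSpace):\onorm{p}<\infty\}$, which the computation in the next paragraph shows to be a linear subspace, before calling $\oiprod{\freevar}{\freevar}$ an inner product.

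For \ref{item:separable:norm-iprod}, bilinearity of $(p_1,p_2)\mapsto\sum_{i\in I}\iprod{p_1^*\phi_i}{p_2^*\phi_i}$ and its symmetry are immediate from bilinearity and symmetry of $\iprod{\freevar}{\freevar}_U$ and linearity of $p\mapsto p^*$. Applying the Cauchy--Schwarz inequality in $U$ to each summand and then the Cauchy--Schwarz inequality for $\ell^2$-sequences gives $\abs{\oiprod{p_1}{p_2}}\le\onorm{p_1}\onorm{p_2}$, so the bilinear form is finite-valued once $\onorm{p_1},\onorm{p_2}<\infty$; likewise Minkowski's inequality for $\ell^2$ together with the triangle inequality in $U$ gives $\onorm{p_1+p_2}\le\onorm{p_1}+\onorm{p_2}$, confirming that the finite-norm operators form a subspace. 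Positive semidefiniteness holds since $\oiprod{p}{p}=\sum_{i\in I}\norm{p^*\phi_i}_U^2\ge 0$; and $\oiprod{p}{p}=0$ forces $p^*\phi_i=0$ for every $i$, whence $p^*$ vanishes on the dense set $\Span\{\phi_i\}_{i\in I}$ and, being bounded, $p^*=0$, i.e.\ $p=0$. Thus $\oiprod{\freevar}{\freevar}$ is an inner product and $\onorm{\freevar}=\sqrt{\oiprod{\freevar}{\freevar}}$ the induced norm.

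For \ref{item:separable:operator-norm-combo}, I would use $(pM)^*=M^*p^*$ and $\norm{M^*}_{\linear(U;U)}=\norm{M}_{\linear(U;U)}$ to get
\[
\onorm{pM}^2=\sum_{i\in I}\norm{M^*p^*\phi_i}_U^2\le\norm{M}_{\linear(U;U)}^2\sum_{i\in I}\norm{p^*\phi_i}_U^2=\norm{M}_{\linear(U;U)}^2\onorm{p}^2 .
\]
For \ref{item:separable:operator-norm}, for any $u\in U$ Parseval's identity in $\AlphaSpace$, the adjoint relation $\iprod{pu}{\phi_i}_\AlphaSpace=\iprod{u}{p^*\phi_i}_U$, and Cauchy--Schwarz in $U$ give
\[
\norm{pu}_\AlphaSpace^2=\sum_{i\in I}\abs{\iprod{pu}{\phi_i}_\AlphaSpace}^2=\sum_{i\in I}\abs{\iprod{u}{p^*\phi_i}_U}^2\le\norm{u}_U^2\,\onorm{p}^2 ,
\]
and taking the supremum over $\norm{u}_U\le 1$ yields $\norm{p}_{\linear(U;\AlphaSpace)}\le\onorm{p}$.

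I expect no genuine obstacle here beyond bookkeeping: the single point that requires care is convergence of the defining series, so I would either assume $\AlphaSpace$ finite-dimensional or pass to the Hilbert--Schmidt class at the outset. It is also worth remarking---though none of the arguments above use it---that $\onorm{p}$, being a Hilbert--Schmidt norm, does not depend on the particular orthonormal basis $\{\phi_i\}_{i\in I}$ used to define it.
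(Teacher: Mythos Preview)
Your argument is correct, and for parts \ref{item:separable:norm-iprod} and \ref{item:separable:operator-norm-combo} it is essentially identical to the paper's. Your concern about convergence of the series is well-placed and in fact more careful than the paper, which simply asserts the result on all of $\linear(U;\AlphaSpace)$ without comment; in the paper's applications $\AlphaSpace$ is finite-dimensional, so the issue does not arise, but your remark about restricting to the Hilbert--Schmidt class is the honest way to state it in general.

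For part \ref{item:separable:operator-norm} you take a genuinely different route. The paper works on the adjoint side: it writes $\norm{p}_{\linear(U;\AlphaSpace)}=\sup_{\norm{\alpha}=1}\norm{p^*\alpha}_U$, expands $\alpha=\sum_i a_i\phi_i$ in the basis, and bounds the resulting double sum $\sum_{i,j}a_ia_j\iprod{p^*\phi_i}{p^*\phi_j}$ via Young's inequality. You instead work on the direct side, using Parseval in $\AlphaSpace$ for $\norm{pu}^2$ and then Cauchy--Schwarz in $U$ termwise. Your route is cleaner---it avoids the cross terms entirely and uses only Parseval and a single Cauchy--Schwarz---whereas the paper's estimate of the double sum is a bit awkward. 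Both arrive at the same bound, of course, but yours is the more transparent argument.
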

\begin{proof}
    \cref{item:separable:norm-iprod}
    Clearly $\oiprod{\freevar}{\freevar}$ is bilinear and symmetric.
    Also $\onorm{p} = \oiprod{p}{p} \ge 0$ for all $p \in P$.
    To prove that $\onorm{p} > 0$ for $p \ne 0$, we observe that the contrary implies $\norm{p^*\phi_i}=0$ for all $i \in I$. Since $\{\phi_i\}_{i \in I}$ is a basis for $\AlphaSpace$, this implies $p^*=0$, hence $p=0$.

    \ref{item:separable:operator-norm-combo}
    We have
    $
        \onorm{pM}^2
        =\sum_{i \in I} \norm{M^*p^*\phi_i}^2
        \le \sum_{i \in I} \norm{M^*}_{\linear(U; U)}^2 \norm{p^*\phi_i}^2
        = \norm{M}_{\linear(U; U)}^2 \onorm{p}^2.
    $

    \ref{item:separable:operator-norm}
    Let $p \in \linear(U; \AlphaSpace)$.
    Then
    $\norm{p}_{\linear(U; \AlphaSpace)}
    =\norm{p^*}_{\linear(\AlphaSpace; U)}
    =\sup_{\alpha \in \AlphaSpace, \norm{\alpha}=1} \norm{p^*\alpha}_U$.
    Since $\{\phi_i\}_{i \in I}$ is an orthonormal basis for $\AlphaSpace$, we can write $\alpha=\sum_{i \in I} a_i \phi_i$ for some $a_i \in \R$ with $\sum_{i \in I} a_i^2=1$.
    Thus
    $
        \norm{p^*\alpha}_U^2
        =\sum_{i \in I}\sum_{\alpha \in I} a_i a_j \iprod{p^*\phi_i}{p^*\phi_j}_U
        \le
        \sum_{i \in I} \left(\sum_{\alpha \in I} a_j^2\right) \norm{p^*\phi_i}_U^2
        = \onorm{p}^2,
    $
    where the last inequality uses Young's inequality.
    The claim follows.
\end{proof}

\bibliographystyle{jnsao}

\end{document}